\definecolor{ghcolor}{RGB}{0, 150, 200} 
\definecolor{winestain}{rgb}{0.5,0,0}
\newtheorem{theorem}[subsubsection]{Theorem}
\newtheorem{thm}[subsubsection]{Theorem}
\newtheorem{lemma}[subsubsection]{Lemma}
\newtheorem{cor}[subsubsection]{Corollary}
\newtheorem{prop}[subsubsection]{Proposition}
\theoremstyle{definition}
\newtheorem{defn}[subsubsection]{Definition}
\newtheorem{construction}[subsubsection]{Construction}
\newtheorem{assumption}[subsubsection]{Assumption}
\newtheorem{notation}[subsubsection]{Notation}
\newtheorem{convention}[subsubsection]{Convention}
\theoremstyle{remark}
\newtheorem{rem}[subsubsection]{Remark}
\numberwithin{equation}{subsection}
\def \into {\hookrightarrow }
\def \to {\rightarrow}
\def \onto {\twoheadrightarrow}
\renewcommand{\projlim}{\varprojlim}
\newcommand{\GL}{\mathrm{GL}}
\DeclareMathOperator{\Mod}{Mod}
\DeclareMathOperator{\Rep}{Rep}
\DeclareMathOperator{\Hom}{Hom}
\DeclareMathOperator{\Ker}{Ker}
\DeclareMathOperator{\Gal}{Gal}
\DeclareMathOperator{\Fil}{Fil}
\DeclareMathOperator{\Spec}{Spec}
\def\et{\mathrm{\acute{e}t}}
\newcommand{\ur}{\mathrm{ur}}
\newcommand{\Kinfty}{{K_{\infty}}}
\def\cris{{\mathrm{cris}}}
\def\st{{\mathrm{st}}}
\def\dR{{\mathrm{dR}}}
\def\HT{{\mathrm{HT}}}
\newcommand{\Ainf}{\mathbf{A}_{\mathrm{inf}}}
\newcommand{\acris}{\mathbf{A}_{\mathrm{cris}}}
\newcommand{\bcris}{\mathbf{B}_{\mathrm{cris}}}
\newcommand{\bst}{\mathbf{B}_{\mathrm{st}}}
\newcommand{\bdr}{\mathbf{B}_{\mathrm{dR}}}
\newcommand{\bht}{\mathbf{B}_{\mathrm{HT}}}
\newcommand{\BT}{\mathrm{BT}}
\newcommand{\bbcris}{\mathbb{B}_{\mathrm{cris}}}
\newcommand{\bbst}{\mathbb{B}_{\mathrm{st}}}
\newcommand{\bbdr}{\mathbb{B}_{\mathrm{dR}}}
\newcommand{\bbht}{\mathbb{B}_{\mathrm{HT}}}
\newcommand{\wta}{   {\widetilde{{\mathbf{A}}}}  }
\newcommand{\wtb}{   {\widetilde{{\mathbf{B}}}}  }
\newcommand{\wtE}{   {\widetilde{{\mathbf{E}}}}  }
 \def \E   {{\mathcal E}}
 \def \O {{\mathcal{O}}}
 \renewcommand{\OE}{{\mathcal{O}_{\mathcal E}}}
\def \OK {{\mathcal{O}_K}}
\def \ok {{\mathcal{O}_K}}
\def \ko {{K_{0}}}
\def \oc {{\mathcal{O}_C}}
\def \oko {{\mathcal{O}_{K_0}}}
\newcommand*{\wt}[1]{\widetilde{#1}}
\newcommand*{\wh}[1]{\widehat{#1}}
\newcommand{\Zp}{{\mathbb{Z}_p}}
\newcommand{\Qp}{{\mathbb{Q}_p}}
\newcommand{\Fp}{{\mathbb{F}_p}}
\newcommand{\zp}{{\mathbb{Z}_p}}
\newcommand{\qp}{{\mathbb{Q}_p}}
\newcommand{\cm}{\mathcal{M}} 
\def \cn {\mathcal N}
\newcommand{\gs}{{\mathfrak{S}}}
\newcommand{\gu}{{\mathfrak{u}}}
\newcommand{\gm}{{\mathfrak{M}}}
\newcommand{\gn}{{\mathfrak{N}}}
\def \hM {{\widehat{\mathfrak{M}}} }
\def \hm {{\widehat{\mathfrak{M}}} }
\newcommand{\bfD}{ {\mathbf{D}}}
\newcommand{\bfm}{ {\mathbf{M}}}
 \newcommand{\bbk}{{\mathbb{K}}}
  \newcommand{\bbd}{{\mathbb{D}}}
 \newcommand{\bbK}{{\mathbb{K}}}
\newcommand{\bbko}{{\mathbb{K}_0}}
\newcommand{\bbkinfty}{{\bbk_\infty}}
\newcommand{\ZZ}{\mathbb{Z}}
\renewcommand{\phi}{\varphi}
\DeclareSymbolFontAlphabet{\mathbb}{AMSb}
\DeclareSymbolFontAlphabet{\mathbbl}{bbold}
\newcommand{\MFnabla}{\mathbf{MF}_{K/K_0}(\varphi, N, \nabla)} 
\newcommand{\MFnablageq}{\mathbf{MF}_{K/K_0}^{\geq 0}(\varphi, N, \nabla)}
\newcommand{\MFkpf}{\mathbf{MF}_{\bbk/\bbk_0}(\varphi, N)}
\newcommand{\MFkpfgeq}{\mathbf{MF}_{\bbk/\bbk_0}^{\geq 0}(\varphi, N )}
\newcommand{\MFnablawa}{\mathbf{MF}^{\mathrm{wa}}_{K/K_0}(\varphi, N, \nabla)}
\newcommand{\MFnablawageq}{\mathbf{MF}^{\geq 0, \mathrm{wa}}_{K/K_0}  (\varphi, N, \nabla)}
 \newcommand{\MFa}{\mathbf{MF}^{\mathrm{a}}_{K/K_0}(\varphi, N, \nabla)}
 \newcommand{\Modgsnabla}{\Mod_{\gs}(\varphi, N, \nabla)}
\newcommand{\ModA}{\Mod_{\mathcal A}(\varphi, N_\nabla, \nabla)}
\newcommand{\ModAO}{\Mod^{0}_{\mathcal A}(\varphi, N_\nabla, \nabla)}
\begin{document}

\title[]{Integral $p$-adic Hodge theory in the imperfect residue field case}

\date{\today}

\author[]{Hui Gao} 
\address{Department of Mathematics, Southern University of Science and Technology, Shenzhen 518055,  China}
\email{gaoh@sustech.edu.cn}

\subjclass[2010]{Primary  11F80, 11S20}
\keywords{integral $p$-adic Hodge theory, Breuil-Kisin modules, imperfect residue field case}
\begin{abstract}
Let $K$ be a mixed characteristic complete discrete valuation field with residue field admitting a finite $p$-basis, and let $G_K$ be the Galois group. 
We first classify  semi-stable representations of $G_K$ by {weakly admissible filtered $(\varphi,N)$-modules with connections}. We then construct a fully faithful functor from the category of \emph{integral} semi-stable representations of $G_K$ to the category of  Breuil-Kisin  $ G_K$-modules. Using the integral theory, we classify $p$-divisible groups over the ring of integers of $K$ by minuscule  Breuil-Kisin modules with connections.
\end{abstract}

\maketitle
\tableofcontents

\section{Introduction}
\subsection{Overview and main theorems}

Let $p$ be a prime.
In this paper, we study $p$-adic Hodge theory, where we use various (semi-)linear algebra data to classify $p$-adic Galois representations. 
The theory works particularly well if one considers the Galois group  of a CDVF (complete discrete valuation field)  of mixed characteristic $(0, p)$ with \emph{perfect} residue field such as a finite extension of $\Qp$.


To study $p$-adic local systems of a general rigid space --which arise naturally, e.g., in the study of Shimura varieties -- one is lead to the theory of \emph{relative $p$-adic Hodge theory}. In this context, mixed characteristic CDVFs    with \emph{imperfect} residue fields naturally arise. For example, let $R=\Zp\langle T^{\pm 1}\rangle$ be the (integral) ring corresponding to a rigid torus, and let $\wh{R_{(p)}}$ be the $p$-adic completion of the localization $R_{(p)}$, then $\wh{R_{(p)}}[1/p]$ is a CDVF with residue field $\Fp(T)$. 
Indeed, localizing $R$ at some other ideals also give rise to CDVFs   with  imperfect  residue fields, cf. e.g. \cite[\S 3.3]{Bri08}. To understand  $p$-adic representations of $\pi_1^{\et}(R[1/p])$) (called \emph{``the relative case"} in the following), it turns out to be crucial to  understand representations  for these CDVFs   with  imperfect  residue fields  (called \emph{``the imperfect residue field case"} in the following), such as  representations of the Galois group of ${\wh{R_{(p)}}[1/p]}$.

In fact, Brinon studies the Hodge-Tate, de Rham, and crystalline representations in the imperfect residue field case  in \cite{Bri06}, which then pave the way for the studies in the relative case in \cite{Bri08}. 
Throughout this paper, let $K$ be a CDVF of characteristic $0$ with residue field $k_K$ of characteristic $p$ such that $[k_K:k_K^p]< \infty$, and let $G_K$ be the Galois group. 
Our first main result is the following.

\begin{theorem}\label{thm1} (= Thm. \ref{thmCF})
 The category of  semi-stable representations of $G_K$ is equivalent to the category of  \emph{weakly admissible filtered $(\varphi,N)$-modules with connections} (cf. Def. \ref{deffilnmod}).
\end{theorem}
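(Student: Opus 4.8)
The plan is to prove an analogue of the Colmez--Fontaine theorem in the imperfect residue field setting. The equivalence will be realized by the period functor $\dst(V) = (\bst \otimes_{\Qp} V)^{G_K}$, whose value is a finite free $\ko$-module equipped with $\varphi$ and $N$ satisfying $N\varphi = p\varphi N$, a filtration transported from $\ddR(V) = (\bdr \otimes_{\Qp} V)^{G_K}$ onto $\dst(V)\otimes_{\ko} K$, and -- this is the new feature forced by imperfectness -- a connection $\nabla$ recording the residual geometric variables. The whole argument proceeds by d\'evissage to the perfect residue field case, where the classical theorem is available.

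First I would set up the d\'evissage. Choose lifts $t_1,\dots,t_d \in \OK$ of a $p$-basis of $k_K$, set $K_\infty^{\mathrm{geom}} = \bigcup_n K(t_1^{1/p^n},\dots,t_d^{1/p^n})$, and let $\bbk$ be its completion. Then $\bbk$ is again a CDVF, but now with \emph{perfect} residue field $k_K^{\mathrm{perf}}$ and unchanged value group, so the classical Colmez--Fontaine theorem applies to $G_\bbk$ over $\bbko = W(k_K^{\mathrm{perf}})[1/p]$. Since completion does not alter the absolute Galois group, $G_\bbk \cong G_{K_\infty^{\mathrm{geom}}}$ is a closed subgroup of $G_K$ with quotient $\Gamma := \Gal(K_\infty^{\mathrm{geom}}/K) \cong \Zp^d$. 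In the \textbf{easy direction}, for semi-stable $V$ the datum $\dst(V)$ is checked to carry all the above structures, and $\dst$ is fully faithful and lands in weakly admissible objects by the standard period-ring formalism, the only genuinely new point being that weak admissibility is \emph{defined} after base change along $\ko \to \bbko$, i.e. via the Hodge and Newton numbers of the underlying filtered $(\varphi,N)$-module over $\bbk$.

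The substance is the \textbf{hard direction} (weakly admissible $\Rightarrow$ admissible). Given a weakly admissible filtered $(\varphi,N,\nabla)$-module $D$ over $K$, I forget $\nabla$ and base change $\ko \to \bbko$ to obtain a weakly admissible filtered $(\varphi,N)$-module over $\bbk$; by the classical theorem over the perfect-residue-field field $\bbk$, this equals $\dst^{\bbk}(V_\bbk)$ for a unique semi-stable $G_\bbk$-representation $V_\bbk$. The connection $\nabla$, through its compatibility with $\varphi$ and $N$ and Griffiths transversality with the filtration, supplies the descent datum: it is the infinitesimal form of a $\Gamma$-action, which I integrate to a continuous action of $\Gamma = \Gal(K_\infty^{\mathrm{geom}}/K)$ on $V_\bbk$, thereby extending it to a $G_K$-representation $V$. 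One then verifies that $V$ is $G_K$-semi-stable with $\dst(V) \cong D$, and that the two constructions are mutually inverse, giving the equivalence.

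The \textbf{main obstacle} is matching the connection with the Galois descent in both directions. Concretely, I must show that an integrable connection on the module side integrates to a genuine continuous -- indeed locally analytic -- action of $\Gamma \cong \Zp^d$ on the period side, and conversely that the geometric $\Gamma$-action differentiates back to $\nabla$; this demands convergence estimates in $\bdr$ and $\bst$ together with a Sen-type linearization along the $d$ geometric directions. Equally delicate is the descent of semi-stability itself: one must prove that a $G_K$-representation whose restriction to $G_\bbk$ is semi-stable, carrying a compatible connection, is semi-stable as a $G_K$-representation. This rests on comparing the $G_\bbk$-invariants of the period rings for $K$ and for $\bbk$ and controlling the difference between them, which is where the imperfect residue field contributes all of its genuine difficulty.
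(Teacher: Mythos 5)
Your overall instinct---reduce to the perfect residue field case over $\bbk$ and invoke the classical Colmez--Fontaine theorem there---is indeed the skeleton of the paper's proof of Thm. \ref{thmCF}, but your execution has two genuine gaps, each fatal as stated. First, the tower $K_\infty^{\mathrm{geom}} = \bigcup_n K(t_1^{1/p^n},\dots,t_d^{1/p^n})$ is \emph{not} Galois over $K$: the polynomial $x^{p^n}-t_i$ has roots $\zeta t_i^{1/p^n}$ with $\zeta \in \mu_{p^n}$, and $K$ does not contain $\mu_{p^\infty}$, so $G_{K_\infty^{\mathrm{geom}}}$ is not normal in $G_K$ and your $\Gamma = \Gal(K_\infty^{\mathrm{geom}}/K) \cong \Zp^d$ does not exist. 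Even after repairing this by adjoining the cyclotomic tower (giving a Kummer-type group $\Zp^d \rtimes \Gamma_{\mathrm{cyc}}$), you would need to produce an action of the full non-split extension $G_K$, not merely of a quotient, and your proposal offers no actual integration argument---only the acknowledgment that convergence estimates and a Sen-type linearization would be needed. Second, your hard direction begins by asserting that the base change $D_{\bbko} = \bbko \otimes_{K_0} D$ of a weakly admissible object is weakly admissible over $\bbk$. This is precisely the point that is \emph{not} known a priori (the paper flags it explicitly in Step 2 of its proof): weak admissibility of $D$ tests only subobjects in $\MFnabla$, i.e.\ $\varphi$-, $N$- \emph{and} $\nabla$-stable subspaces, whereas over $\bbko$ every $(\varphi,N)$-stable subspace must satisfy $t_N \leq t_H$, and such subspaces need not descend to $\nabla$-stable ones over $K_0$. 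Relatedly, your ``easy direction'' misstates the definition: in the paper, weak admissibility is defined intrinsically over $K_0/K$ with subobjects taken in $\MFnabla$ (Def.\ 2.4.8 of the paper, following \cite[\S 4.1]{Bri06}), not via base change to $\bbko$.

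The paper's actual route avoids both problems, and it is worth seeing how. No group action is ever integrated: one defines $V = V_\st(D) = (\bst \otimes_{K_0} D)^{\varphi=1,\,N=0,\,\nabla=0} \cap \Fil^0(D_K \otimes_K \bdr)$, and since $\nabla$ on $\bst$ commutes with the $G_K$-action, this $V$ is a $G_K$-stable subspace of $\bst \otimes_{K_0} D$---the $G_K$-action comes for free from the period ring. The connection is then used as a purely linear-algebraic constraint that cuts the space of invariants down to the right size: the comparison isomorphisms
\begin{equation*}
\Fil^0(\bdr\otimes_K D_K)^{\nabla=0} \simeq \Fil^0(\bbdr \otimes_\bbk D_\bbk), \qquad (\bst \otimes_{K_0} D)^{\varphi=1, N=0, \nabla=0} \simeq (\bbst \otimes_{\bbko} D_\bbko)^{\varphi=1, N=0}
\end{equation*}
(the first is \cite[Cor. 4.31]{Bri06}; the second is a semi-stable extension of \cite[Prop. 4.32, Cor. 4.33]{Bri06}, proved via the $\nabla$-stable decomposition $\bst=\oplus_{i \geq 0} \bcris \cdot \gu^i$) yield $V \simeq \mathbb{V}_\st(D_\bbko)$ as vector spaces. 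One then concludes admissibility of $D$ by the numerical criterion of \cite[Thm. 4.3.(ii)]{CF00} applied over $\bbk$, exactly as in the final paragraphs of \cite[Thm. 4.34]{Bri06}; weak admissibility of $D_\bbko$ comes out \emph{a posteriori} rather than being an input. If you want to salvage your approach, the comparison isomorphisms above are the statements your ``integration of $\nabla$'' would have to secretly prove anyway; the paper's insight is that they suffice on their own.
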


\begin{rem}
\begin{enumerate}
\item  When the residue field of $K$ is \emph{perfect}, then this is the classical theorem of Colmez-Fontaine \cite{CF00}; note that in the perfect residue field case, the connection is automatically a zero map.
\item For general $K$, the crystalline case of Thm. \ref{thm1} is obtained in \cite{Bri06}.
\end{enumerate}
\end{rem}

We now have quite good understanding of \emph{general} $p$-adic Galois representations in the relative case; for example, the ``relative $(\varphi, \Gamma)$-modules" (as well as their overconvergent versions) are constructed in \cite{And06, AB08, KL15, KL2}. 
Next thing in order is to study the Hodge-Tate, de Rham, crystalline and semi-stable representations in the relative case. Brinon initiated these studies in the Hodge-Tate, de Rham, and crystalline case in \cite{Bri08}; indeed, with Thm. \ref{thm1} established, we should be able to obtain some similar results in the semi-stable case. 
Recently, the rigidity theorem of Liu-Zhu \cite{LZ17} provides a surprisingly simple description of de Rham local systems. Then, a rigidity theorem for \emph{almost} Hodge-Tate local systems is established by Shimizu \cite{Shi18}. Further, Shimizu obtains a variant of $p$-adic local monodromy theorem in the relative case in \cite{Shipst}. In summary, it seems that we now have some quite interesting understanding of local systems in the de Rham (and almost Hodge-Tate) case, but perhaps not so much in the crystalline or semi-stable case; for example, our understanding of the notion ``weak admissibility" is not completely satisfactory so far, cf. the discussion on \cite[p. 136]{Bri08}.


In the study of crystalline and semi-stable representations in the \emph{perfect} residue field case, an important phenomenon is that we can give very precise classifications of \emph{integral} crystalline and semi-stable representations, the study of which we call   \emph{integral $p$-adic Hodge theory}.
The second main theorem in the paper is the following Thm. \ref{thm2}, which we regard  as a very first step towards the study of \emph{relative integral  $p$-adic Hodge theory}.  

\begin{theorem}\label{thm2} (= Thm. \ref{thm411})
There is a fully faithful functor from the category of \emph{integral} semi-stable representations of $G_K$ to the category of    \emph{Breuil-Kisin  $ G_K$-modules} (cf. Def. \ref{defwr}).
\end{theorem}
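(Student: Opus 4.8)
The plan is to follow the strategy used in the perfect residue field case — Kisin's construction of finite-height modules, reorganized through Liu's $(\varphi,\widehat G)$-module formalism — but now carrying along the extra connection data supplied by Theorem \ref{thm1}. Let $T$ be an integral semi-stable representation, so that $V := T[1/p]$ is semi-stable and, by Theorem \ref{thm1}, corresponds to a weakly admissible filtered $(\varphi, N, \nabla)$-module $D = \dst(V)$. Fix a uniformizer $\pi$ of $K$ together with a compatible system of $p$-power roots generating a Kummer tower $K_\infty$, and let $\mathfrak{S}$ and $\mathcal{O}_{\mathcal{E}}$ be the associated coefficient rings. First I would restrict $T$ to $G_{K_\infty}$ and pass through the theory of étale $\varphi$-modules (in the imperfect residue field setting, via the field of norms of the tower) to obtain an étale $\varphi$-module $M$ over $\mathcal{O}_{\mathcal{E}}$ whose underlying lattice encodes $T$ itself, and not merely $V$.

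The core step is to produce, inside $M$, a finite free $\mathfrak{S}$-submodule $\mathfrak{M}$ that is $\varphi$-stable and of finite $E(u)$-height. For this I would adapt Kisin's analytic construction: build from $D$ a $\varphi$-module $\Mnrig{}$ over the Robba ring $\mathbf{B}^\dagger_{\mathrm{rig}}$, use weak admissibility to control the slopes, descend the result to $\mathfrak{S}[1/p]$, and finally set $\mathfrak{M} := \Mnrig{} \cap M$ to land on an integral lattice. Throughout, the monodromy operator $N$ and the connection $\nabla$ carried by $D$ must be transported to operators $N_\nabla$ and $\nabla$ on the Robba-ring module, so that one genuinely works in the category $\ModA$; the compatibility of these operators with $\varphi$ and with the height bound is what makes $\mathfrak{M}$ canonical and hence functorial in $T$. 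This is where the imperfect residue field is most visible: Kedlaya's slope theory and the analytic-to-algebraic descent must be run in the presence of the several horizontal directions coming from a $p$-basis of $k_K$, and, since $\mathfrak{S}$ now has higher Krull dimension, the freeness of the intersection $\Mnrig{}\cap M$ is no longer automatic and requires a separate argument.

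Next I would equip $\mathfrak{M}$ with its $G_K$-descent datum, turning it into a Breuil-Kisin $G_K$-module in the sense of Definition \ref{defwr}. Because $T$ already carries a genuine $G_K$-action, the comparison isomorphisms furnished by the semi-stable period rings (and, following Brinon, their crystalline counterparts) identify $\mathfrak{M} \otimes_{\mathfrak{S}} W(R)$ with a module built from $T$ over $W(R)$; transporting the tautological $G_K$-action across this identification yields a semilinear $G_K$-action satisfying the axioms of Definition \ref{defwr}. The essential point is that this action is pinned down by the $G_{K_\infty}$-structure together with the operators $N_\nabla$ and $\nabla$, which is exactly the information the enhancement records.

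Finally, for full faithfulness: faithfulness is immediate from the faithfulness of the étale realization. For fullness, a morphism of Breuil-Kisin $G_K$-modules $f \colon \mathfrak{M} \to \mathfrak{M}'$ restricts to a $\varphi$-equivariant map of étale modules, hence to a $G_{K_\infty}$-equivariant map $T \to T'$; compatibility of $f$ with the $G_K$-descent data then upgrades this to a $G_K$-equivariant map, so it comes from a unique morphism $T \to T'$ of representations. I expect the main obstacle to be the core step: establishing the existence and uniqueness of the finite-height lattice $\mathfrak{M}$ while correctly propagating $\nabla$, i.e. making the Robba-ring descent function when $k_K$ is imperfect and $\mathfrak{S}$ is no longer a two-dimensional regular local ring. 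A secondary difficulty is verifying that the recovered $G_K$-action satisfies the integral continuity and cocycle conditions of Definition \ref{defwr}, rather than only holding after inverting $p$.
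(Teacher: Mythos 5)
Your first half---building a Robba-ring style $\varphi$-module from $D$, using weak admissibility to control slopes, descending to $\gs[1/p]$, and intersecting with the \'etale lattice to obtain $\gm$---is essentially the paper's route: this is Thm.~\ref{323} (which packages the Brinon--Trihan adaptation of Kisin's construction, with the $N_\nabla$- and $\nabla$-operators carried along) combined with the lattice correspondence of Lem.~\ref{45}. Two technical corrections there. First, the \'etale $\varphi$-module theory in this setting classifies representations of $G_{\bbkinfty}$, the Kummer tower over the auxiliary field $\bbk$ with \emph{perfect} residue field, not of $G_{\Kinfty}$ itself (Prop.~\ref{prop42}; the functor from $\Mod_\gs^\varphi$ is merely fully faithful, Prop.~\ref{prop43}), so the lattice correspondence is between $\varphi$-stable $\gs$-lattices and $G_{\bbkinfty}$-stable $\Zp$-lattices. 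Second, $\gs=\oko\llbracket u\rrbracket$ is still a two-dimensional regular local ring; the genuine new difficulty is imperfectness of the residue field (e.g.\ $\varphi$ is not bijective on $K_0$, cf.\ Rem.~\ref{remdimp}), not Krull dimension, and freeness of $\gm$ comes out of Lem.~\ref{45} exactly as in Kisin.

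The genuine gap is your third step. Transporting the tautological action across the comparison only produces a $G_K$-action on $T\otimes_{\Zp}W(C^\flat)$, which is vacuous; the entire content of Def.~\ref{defwr} is that the \emph{integral} lattice $\hM=\gm\otimes_\gs W(\O_C^\flat)$ is stable under this action, that $\gm$ is fixed by $G_{\Kinfty}$, and that $\gm/u\gm$ is fixed by $G_K$ modulo $W(\mathfrak{m}_{\O_C^\flat})$. You flag this as a ``secondary difficulty,'' but it is the main content of the theorem, and your proposal supplies no mechanism for it; moreover, your suggestion that the action is ``pinned down by $N_\nabla$ and $\nabla$'' points the wrong way, since the paper's integral construction never uses the connection at all (Rem.~\ref{rem437}(2) explains that no integral $\nabla$ is even known). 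The paper's mechanism has three ingredients absent from your sketch: (i) restrict to $G_\bbk$ and invoke the perfect-residue-field theorem (Thm.~\ref{thmgao}, i.e.\ the main result of \cite{Gaolp}) to see that $\gm\otimes_\gs W(\O_{\mathbb{C}}^\flat)$ is $G_\bbk$-stable; (ii) prove that $G_{\Kinfty}$ fixes $\gm$ pointwise (Prop.~\ref{p434})---itself nontrivial, obtained by combining the comparison $D\otimes_{K_0}S[1/p]\simeq\gm\otimes_{\varphi,\gs}S[1/p]$ of Prop.~\ref{p325} with an overconvergence argument: since $\wtb^\dagger$ is a field, the isomorphism $V\otimes_{\Qp}\wtb^\dagger\simeq\varphi^\ast\gm\otimes_\gs\wtb^\dagger$ descends to some $\wtb_{[r_n,+\infty]}$ and embeds $G_K$-equivariantly into $\bdr$ via $\iota_n$, recovering the de Rham comparison in which $\varphi^\ast\gm$ is visibly $G_{\Kinfty}$-fixed; (iii) conclude by Lem.~\ref{lemsubgen}, that $G_\bbk$ and $G_{\Kinfty}$ generate $G_K$. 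Without (i)--(iii)---in particular without the external input of \cite{Gaolp} over $\bbk$---conditions (4) and (5) of Def.~\ref{defwr} are never verified, so the proof is incomplete precisely at the point you set aside. (Your full-faithfulness argument is fine and matches the paper's, which runs it through Lem.~\ref{thmwfr} after base change to $W(C^\flat)$.)
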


\begin{rem}
\begin{enumerate}
\item When the residue field of $K$ is \emph{perfect}, then the main result of \cite{Gaolp}
says that the functor in the theorem is indeed an equivalence.

\item In comparison with Thm. \ref{thm1}, it might seem tempting to construct some category of Breuil-Kisin  $ G_K$-modules \emph{with connections}. However, it seems difficult to characterize the properties of the possible connection operator, cf. Rem. \ref{rem437}. Nonetheless, if we focus on integral crystalline representations with Hodge-Tate weights in $\{0, 1\}$, which correspond to $p$-divisible groups over $\ok$ (the ring of integers of $K$), then we can indeed construct some category with connection operators, cf. Thm. \ref{thm3} in the following. 
\end{enumerate}
\end{rem}


Using the integral theory developed in this paper, we can classify $p$-divisible groups over $\ok$.

\begin{theorem}\label{thm3} (= Thm. \ref{68})
The category of $p$-divisible groups over $\ok$ is equivalent to the category of minuscule Breuil-Kisin modules with connections.
\end{theorem}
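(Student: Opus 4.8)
The plan is to factor the equivalence through integral crystalline representations and to apply the integral functor of Theorem~\ref{thm2}. First I would recall the classical dictionary, valid here by the crystalline theory of \cite{Bri06} together with Tate's theorem, that the generic-fibre Tate module $T_p(H)$ of a $p$-divisible group $H$ over $\ok$ is a $G_K$-stable $\Zp$-lattice inside a crystalline representation (so $N=0$) with Hodge--Tate weights in $\{0,1\}$, and that $H\mapsto T_p(H)$ is fully faithful with essential image exactly these lattices. Granting this, Theorem~\ref{thm3} reduces to two assertions: (i) the functor $T\mapsto\gm(T)$ of Theorem~\ref{thm2} carries the integral crystalline representations with Hodge--Tate weights in $\{0,1\}$ to the \emph{minuscule} Breuil--Kisin $G_K$-modules; and (ii) on minuscule objects the residual $G_K$-action is equivalent data to a connection $\nabla$, so that minuscule Breuil--Kisin $G_K$-modules coincide with minuscule Breuil--Kisin modules with connections.

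For (i), the minuscule condition (namely $E(u)\gm\subseteq\varphi^*(\gm)$, i.e. $E(u)$-height $\le 1$) should drop out of the Hodge--Tate weights lying in $\{0,1\}$, since those weights pin down the elementary divisors of $1\otimes\varphi\colon\varphi^*\gm\to\gm$ exactly as in the perfect residue field situation; full faithfulness on this subcategory is inherited directly from Theorem~\ref{thm2}. For (ii), crystallinity forces $N=0$, so the only structure on $\gm$ beyond the $\varphi$-module structure over $\gs$ that records the Galois action is the ``geometric'' part arising from the imperfection of $k_K$; differentiating this part of the $G_K$-action along a lift of a $p$-basis of $k_K$ produces $\nabla$, and I would verify that for minuscule $\gm$ this $\nabla$ is well defined, integrable, topologically nilpotent, and recovers the $G_K$-action uniquely. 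This is exactly where Remark~\ref{rem437} cautions that no general characterization is available; the restriction of the Hodge--Tate weights to $\{0,1\}$ is what makes $\nabla$ manageable.

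The essential surjectivity of the resulting functor is the heart of the matter, and I expect it to be the main obstacle. Starting from an abstract minuscule Breuil--Kisin module with connection, I would read the pair $(\gm,\nabla)$ as a (relative) Dieudonn\'e crystal over $\ok$, the connection $\nabla$ supplying precisely the crystal's horizontal structure along the imperfect directions, and then invoke Grothendieck--Messing and de Jong's crystalline Dieudonn\'e theory to realize it as a genuine $p$-divisible group $H$ over $\ok$ with $T_p(H)$ the crystalline lattice attached to $(\gm,\nabla)$ by (ii) and Theorem~\ref{thm2}. An alternative, more hands-on route is to base change along $\ok\to\ok^{\mathrm{pf}}$, the complete discrete valuation ring obtained by adjoining $p$-power roots of a $p$-basis and completing, whose residue field is perfect: there the forgotten module $\gm\otimes_{\gs}\gs^{\mathrm{pf}}$ is realized as a $p$-divisible group by the perfect residue field classification (\cite{Gaolp} and the references therein), after which $\nabla$ provides the infinitesimal descent datum needed to descend it back to $\ok$.

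The difficulty is concentrated in this last step and is twofold. First, I must prove that a connection satisfying the conditions isolated in (ii) is equivalent to \emph{effective} descent (equivalently crystal) data for $p$-divisible groups; along the non-finite, non-\'etale extension $\ok\to\ok^{\mathrm{pf}}$ this forces one to match horizontality against the cocycle condition and to control the relevant convergence and integrability estimates. Second, and underlying everything, is the characterization of the connection operator itself, which as Remark~\ref{rem437} stresses is delicate in general and is tamed here only because minuscule objects have Hodge--Tate weights confined to $\{0,1\}$. Once these points are secured, combining the reconstruction of $(\gm,\nabla)$ with Tate's full faithfulness and Theorem~\ref{thm2} yields the asserted equivalence of categories.
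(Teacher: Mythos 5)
Your first reduction---classifying $\BT(\ok)$ by $\Rep_{\Zp}^{\cris,1}(G_K)$ and then invoking the integral functor of Thm.~\ref{thm411}---is exactly the paper's strategy (cf.\ diagram \eqref{tik3}), but two of your three steps have genuine gaps. First, your step (ii) is not available: you claim that on minuscule objects the $G_K$-action is \emph{equivalent data} to a connection, obtained by differentiating the Galois action along a lift of a $p$-basis. The paper proves no such equivalence and in fact Rem.~\ref{rem437} explicitly cautions that it is unclear whether the Galois action even induces an \emph{integral} connection $\nabla\colon \gm/u\gm \to \gm/u\gm\otimes_\oko \wh{\Omega}_\oko$; the minuscule hypothesis does not remove this difficulty. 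The paper's actual construction of $\nabla$ (Prop.~\ref{66} and Prop.~\ref{67}) goes through the Dieudonn\'e crystal: one shows $\gm(T)\otimes_{\varphi,\gs}S\simeq \bfm(G)$ and transports the crystal's connection, which lands on the Frobenius twist $\varphi^\ast(\gm/u\gm)$ rather than on $\gm/u\gm$---a subtlety your differentiation construction misses, and which Rem.~\ref{r523} flags as the likely error in the definition of \cite{BT08}. (Relatedly, your opening appeal to a ``classical dictionary'' glosses over the fact that the integral equivalence $\BT(\ok)\simeq\Rep_{\Zp}^{\cris,1}(G_K)$ is itself new here, Thm.~\ref{thmtp}, and that even the rational statement required repair since the full-faithfulness claim of \cite{BT08} has a gap, cf.\ Rem.~\ref{rembtwrong} and the proof of Thm.~\ref{thm610}.)

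Second, your essential surjectivity plan fails precisely in the case where the theorem is new, namely $p=2$. Reading an abstract $(\gm,\nabla)$ as a Dieudonn\'e crystal over $S$ and invoking Grothendieck--Messing or crystalline Dieudonn\'e theory reproduces the route of Brinon--Trihan plus Caruso--Liu, which works only for $p>2$: at $p=2$ the passage from $\gs$-modules to $S$-modules (Thm.~\ref{thm53}, Cor.~\ref{c524}) and the crystal classification (Thm.~\ref{thm61}) are equivalences only up to isogeny, and full faithfulness of $\cm_\gs^1$ at $p=2$ is open (Rem.~\ref{rembtwrong}), so the integral structure is lost exactly where it matters. Your alternative---descent along $\ok\to\O_\bbk$ with $\nabla$ as descent datum---is acknowledged by you as unresolved, and it is not how the paper proceeds. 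The paper's actual $p=2$ argument avoids crystals and descent entirely: given $(\gm,\nabla)$, choose by the isogeny equivalence a lattice $L\in\Rep_{\Zp}^{\cris,1}(G_K)$ with $\gm\into\gn=\underline{\gm}(L)$ an isomorphism after inverting $p$; show $\gm\otimes_\gs W(\O_C^\flat)$ is $G_\bbk$-stable via the perfect residue field classification (Thm.~\ref{699}, Thm.~\ref{thmgao}) and $G_{\Kinfty}$-fixed via Prop.~\ref{p434}, hence $G_K$-stable since $G_\bbk$ and $G_{\Kinfty}$ generate $G_K$ (Lem.~\ref{lemsubgen}); this produces a $G_K$-stable lattice $T$ with $\underline{\gm}(T)=(\gm,\nabla)$, the connection being matched by the elementary Lem.~\ref{lem6}. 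This Galois-lattice mechanism, imported from the integral theory of \S\ref{sec4}, is the missing idea in your proposal.
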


\begin{rem}\label{r116}
\begin{enumerate}
\item When $K$ has \emph{perfect} residue field, the theorem is due to Kisin \cite{Kis06} when $p>2$, and independently to  Kim \cite{Kim12}, Lau \cite{Lau14} and Liu \cite{Liu13} when $p=2$.
\item For general $K$, the theorem is   known when $p>2$ (cf. Rem. \ref{re633}): one can deduce it by an easy combination of results in \cite{BT08} and a theorem of Caruso-Liu \cite{CL09}.
(Alternatively, it can also be regarded as a special case  of results of Kim \cite{Kim15}, where  $p$-divisible groups for $p>2$ in the \emph{relative} case are classified).
\item When $p=2$ (and for general $K$), new ideas are needed. In fact, a key ingredient in the proof is that we can first show that the category of $p$-divisible groups over $\ok$ is equivalent to the category of integral crystalline representations of $G_K$ with Hodge-Tate weights in $\{0, 1\}$. 
Then we use our integral theory  to construct the equivalence with the category of minuscule Breuil-Kisin modules  with connections. 
\item As a consequence of  Thm. \ref{thm3}, we can classify finite flat group schemes over $\ok$, cf. Thm. \ref{fflat}.
\end{enumerate}
\end{rem}

We propose some speculations and questions for future investigations.
\begin{rem}
\begin{enumerate}
\item What is the possible generalization of Thm. \ref{thm2} in the \emph{relative} case?

\item What is the relation between Thm. \ref{thm3} and the \emph{filtered prismatic Dieudonn\'e modules} in \cite{ALB}?

\item As mentioned in Rem. \ref{r116}(2), $p$-divisible groups in the \emph{relative} case are classified by Kim \cite{Kim15} when $p>2$; to study the $p=2$ case, it seems likely the ideas (and possibly even the results) in this paper could be useful.
\end{enumerate}
\end{rem}

\subsection{Structure of the paper}
In \S \ref{sec2}, we use Fontaine modules (i.e., {weakly admissible} filtered $(\varphi,N)$-modules with connections) to classify semi-stable representations.
In \S \ref{sec3}, we build the link from  Fontaine modules to Kisin's $\mathcal A$-modules and $\gs$-modules.
In \S \ref{sec4}, we use {Breuil-Kisin  $ G_K$-modules} to study \emph{integral} semi-stable representations.
In \S \ref{sec5}, we review results about $\varphi$-modules over $\gs$ and $S$.
In \S \ref{secpdiv},  we classify $p$-divisible groups over $\ok$.

\subsection{Some notations and conventions}
\begin{convention}\label{subsubco}
\emph{Categories and co-variant functors.} 
\begin{itemize}
\item In this paper we will define many categories of modules (with various structures); we will always omit the definition of morphisms for these categories, which are always obvious (i.e., module homomorphisms compatible with various structures). 
\item  When we define functors relating various categories, we will always use \emph{co-variant} functors. This makes the comparisons amongst them easier (i.e., using tensor products, rather than $\Hom$'s).
\end{itemize}
\end{convention}

\begin{convention}
\emph{Hodge-Tate weights, and Breuil-Kisin heights.}
\begin{itemize}
\item In accordance with Convention \ref{subsubco}, our $D_\st(V)$ is defined as $(V\otimes_{\Qp} \bst)^{G_K}$, and hence the Hodge-Tate weight of the cyclotomic character $\chi_p$ is $-1$.
\item  Once we move on to study  \emph{integral} theory from \S \ref{sec3}, we will focus on representations with \emph{non-negative} Hodge-Tate weights and Breuil-Kisin modules with \emph{non-negative} $E(u)$-heights. For example, the Breuil-Kisin module associated to $\chi_p^{-1}$  has $E(u)$-height $1$.
\end{itemize}
\end{convention}

\begin{notation} 
Let $H$ be a profinite group, then we use $\Rep_{\Fp}(H)$ (resp. $\Rep_{\Zp}(H)$, resp. $\Rep_{\Qp}(H)$) denote  the category of finite dimensional $\Fp$-vector spaces (resp. finite free $\Zp$-modules, resp. finite dimensional $\Qp$-vector spaces) $V$   with  continuous $\Fp$- (resp. $\Zp$-, resp. $\Qp$-) linear $H$-actions.
Sometimes we put superscripts such as ``$\cris, \st, \geq 0$" etc. over $\Rep_{\Zp}(H)$ or $\Rep_{\Qp}(H)$ for the obvious meaning: i.e., crystalline representations, resp. semi-stable representations, resp. those with Hodge-Tate weights $\geq 0$.
\end{notation}
 
\begin{notation}
Throughout this paper, we reserve $\varphi$ to denote Frobenius operator. We sometimes add subscripts to indicate on which object Frobenius is defined. For example, $\varphi_\gm$ is the Frobenius defined on $\gm$. We always drop these subscripts if no confusion arises.  Given a homomorphism of rings $\varphi: A \to A$ and given an $A$-module $M$, denote $\varphi^{\ast}M: =A\otimes_{\varphi, A} M$. 
For a category $\mathscr C$ whose morphism sets are $\Zp$-modules, we use $\mathscr C \otimes_\Zp \Qp$ to denote its isogeny category.
\end{notation}


\subsection*{Acknowledgement} I thank Olivier Brinon, Heng Du, Tong Liu, Yong Suk Moon, Kazuma Morita and Weizhe Zheng for some useful discussions and correspondences.

\section{Fontaine modules and semi-stable representations}\label{sec2}
 In this section, we first review the Fontaine rings and the Fontaine modules (in the imperfect residue field case). We then show that \emph{weakly admissible} filtered $(\varphi,N)$-modules with connections are \emph{admissible}, and hence classify semi-stable representations.
 
 \subsection{Notations of base fields}\label{sec21}
Let $K$ be a complete discrete valuation field of characteristic $0$ with residue field $k_K$ of characteristic $p>0$ such that $[k_K:k_K^p]=p^d$ where $d\geq 0$.  
Fix an algebraic closure $\overline{K}$ of $K$ and let $G_{K}=\Gal(\overline{K}/K)$.
We first set up  notations for some other fields related with $K$.
 
  The notations below depend on various choices, particularly that of $\ko$ and $\varphi$. The way these choices are made are slightly different in the references \cite{Bri06}, \cite{BT08}, \cite{MoritadR, Moritacrys} and \cite{Ohk13}. But they are all ``equivalent" definitions, cf. the detailed discussions in \cite[1A, 1G]{Ohk13}.  Our exposition here largely follows that in \cite{Bri06}.

\emph{Fix} a closed sub-field $K_0 \subset K$ with residue field $k_K$ and with $p$ as a uniformizer. 
Note that such $K_0$ is not unique (unless $d=0$ or $p$ is a uniformizer of $K$); but the ramification index $e_K=[K: K_0]$ is uniquely determined. Let $\O_K, \O_{K_0}$ be the ring of integers.
Let $\bar{t}_1, \cdots, \bar{t}_d$ be a basis of $k_K$ over $k_K^p$, and \emph{fix} some lifts $t_1, \cdots, t_d \in \O_{K_0}$. \emph{Fix} a Frobenius $\varphi$ on $\oko$ which lifts the $p$-power map on $k_K$; such $\varphi$ is not unique.

Let $\overline{k_K}$ be the residue field of $\overline{K}$, and let $\mathbf{k}$ be the radical closure of $k_K \subset \overline{k_K}$. The choices $K_0$ and $\varphi$ determines a $\varphi$-equivariant embedding 
$$i_\varphi: \oko \into W(\mathbf{k}),$$
where $ W(\mathbf{k})$ is the ring of Witt vectors with $\varphi_{W(\mathbf{k})}$ the usual Frobenius map.
Let $\bbk_0 =W(\mathbf{k})[1/p]$, and fix $\overline{\bbk}$ an algebraic closure of $\bbk_0$. 
The map $i_\varphi$ induces 
$$i_\varphi: \overline{K} \into \overline{\bbk}$$
 with dense image and hence further induces 
 $$i_\varphi: C  \simeq \mathbb{C},$$
  where ${C}$ (resp. $\mathbb{C}$) is the $p$-adic completion of $\overline{K}$ (resp. $\overline{\bbk}$).
Let 
$$\bbk=\bbk_0 i_\varphi(K) \subset \overline{\bbk},$$ and let $G_{\bbK}=\Gal(\overline{\bbK}/\bbK)$. Via $i_\varphi: \overline{K} \into \overline{\bbk}$, we can identify  $G_\bbk$ as a subgroup of $G_K$.


\subsection{Fontaine rings}
In this subsection, we give a quick review the Fontaine rings in the  imperfect residue field case; the presentations   here are not necessarily in the order they were first developed. We refer the readers to \cite{Bri06} and \cite{Moritacrys} for more details.


\begin{notation}
Starting from $\bbk$ --a CDVF with \emph{perfect} residue field--  and the perfectoid field $\mathbb{C}$, we use  
\begin{equation} \label{perfbdr}
\bbdr^+,\quad \bbdr,\quad \bbht,\quad \mathbb{A}_\cris,\quad \bbcris^+,\quad \bbcris,\quad \mathbb{A}_\st,\quad \bbst^+,\quad \bbst
\end{equation}
  to denote the ``usual" Fontaine  rings (in the perfect residue field case).
\end{notation}

\begin{notation} \label{no222}
Indeed, all the rings in \eqref{perfbdr} depend \emph{only} on the perfectoid field $\mathbb{C}$ (and not $\bbk$). Hence starting from the perfectoid field $C$, we can also define the corresponding rings; denote them as 
\begin{equation}
\bdr^{\nabla +},\quad \bdr^{\nabla},\quad \bht^{\nabla},\quad \acris^{\nabla},\quad \bcris^{\nabla +},\quad \bcris^{\nabla},\quad \mathbf{A}_\st^{\nabla},\quad \bst^{\nabla +},\quad \bst^{\nabla}.
\end{equation}
(The superscripts $\nabla$ will be explained in \S \ref{drnabla}.)
The isomorphism $i_\varphi: C  \simeq \mathbb{C}$ induces ring isomorphisms 
$$i_\varphi: \mathbf{B}_\ast^{\nabla} \simeq \mathbb{B}_\ast, \text{ for }  \ast \in \{\dR, \HT, \cris, \st  \}.$$
These isomorphisms are compatible with various $\varphi$-, $N$- and filtration structures, and are $G_\bbk$-equivariant (as we mentioned in \S \ref{sec21}, we regard $G_\bbk$ as a subgroup of $G_K$).
\end{notation}

\begin{notation}
 Since $\O_C$ is a perfectoid ring, we can define its tilt $\O_C^\flat$; let $W(\O_C^\flat)$ be the ring of Witt vectors. 
 Elements in $\O_C^\flat$ are in bijection with sequences $(x^{(n)})_{n\geq 0}$ where $x^{(n)} \in \O_C$ and $(x^{(n+1)})^p=x^{(n)}$. For each $1\leq i \leq d$, fix some $\tilde{t_i} \in \O_C^\flat$ with $\tilde{t_i}^{(0)}=t_i$, and let 
 $[\tilde{t_i}]$ be its Teichm\"uller lift. 
Let 
$$u_i =t_i \otimes 1 -1\otimes [\tilde{t_i}] \in \O_K\otimes_\ZZ W(\O_C^\flat).$$
 Let 
 $$\theta: W(\O_C^\flat) \to \oc$$
  be the usual Fontaine's map.
\end{notation} 

 \begin{notation}
 By scalar extension,  the $\theta$-map induces a map 
$$\theta_K:  \OK\otimes_{\mathbb{Z}} W(\O_C^\flat) \to \O_{C}.$$ Let 
\begin{equation*}
\Ainf(\O_C/\O_K): = \projlim_{n \geq 0}  \left( \OK\otimes_{\mathbb{Z}} W(\O_C^\flat) \right)/\left(  \theta_K^{-1}(p\O_C)  \right)^n. 
\end{equation*}
Then the $\theta_K$-map extends to 
$$\theta_K: \Ainf(\O_C/\O_K)  \to \O_{C} \quad \text{ and } \quad \theta_K: \Ainf(\O_C/\O_K) \otimes_{\Zp} \Qp \to C.$$
 Let 
$$\bdr^+:= \projlim_{n \geq 1} (\Ainf(\O_C/\O_K) \otimes_{\Zp} \Qp)/\left(\Ker \theta_K \right)^n.  $$
There are natural maps $W(\O_C^\flat) \to \Ainf(\O_C/\O_K)$ and $\Ker \theta \to \Ker \theta_K$, and they induce a natural map
$$\bdr^{\nabla +} \to \bdr^+;$$
By \cite[Prop. 2.9]{Bri06}, it further induces an isomorphism of rings
\begin{equation}
\bdr^{\nabla +}[[u_1, \cdots, u_d]] \simeq \bdr^+.
\end{equation}
This implies that $\bdr^+$ is a $\overline{K}$-algebra, and is a local ring with   maximal ideal 
$$m_{\dR}=\Ker \theta_K=(t, u_1, \cdots, u_d),$$
where $t \in \bdr^{\nabla +}$ is the usual element.
 Define a filtration on $\bdr^+$ where 
 $$\mathrm{fil}^i \bdr^+=m_{\dR}^i, \forall i \geq 0.$$
Using $\mathrm{fil}^i$, define another filtration on $\bdr:=\bdr^+[1/t]$ by:
\begin{eqnarray*}
\Fil^0 \bdr &=& \sum_{n=0}^\infty t^{-n}\mathrm{fil}^n \bdr^+ =\bdr^+[\frac{u_1}{t}, \cdots, \frac{u_d}{t}],\\
\Fil^i \bdr &=& t^i \Fil^0 \bdr, \forall i \in \mathbb{Z}.
\end{eqnarray*} 
$\bdr$ carries a natural $G_K$-action such that $\Fil^i \bdr$ is stable under the action.
\end{notation} 

\begin{notation} 
Let $\bht$ be the graded algebra associated to $\Fil^i$ of $\bdr$. $\bht$ is a $C$-algebra and it carries an induced $G_K$-action. Clearly
$$ \bht \simeq \bht^\nabla[\frac{u_1}{t}, \cdots, \frac{u_d}{t}]  \simeq C[t, t^{-1}, \frac{u_1}{t}, \cdots, \frac{u_d}{t} ].$$ 
\end{notation} 

 \begin{notation} 
Let 
$$\theta_{K_{0}}:\oko\otimes_{\mathbb{Z}} W(\O_C^\flat)\rightarrow \mathcal{O}_{ {C} }$$ 
denote the natural extension of $\theta:W(\O_C^\flat) \rightarrow \mathcal{O}_{ {C} }$.
Let  $\acris$  be the $p$-adic completion of the PD-envelope of $\oko\otimes_{\mathbb{Z}} W(\O_C^\flat)$ with respect to $\Ker(\theta_{K_{0}})$.
Define $$\bcris^{+}:=\acris[1/p], \quad \bcris :=\bcris^{+}[1/t].$$
The ring $\bcris$ is  the $K_{0}$-algebra  endowed with natural commuting  $G_{K}$-action and  $\varphi$-action.
Furthermore, by \cite[Prop. 2.47]{Bri06}, there is a natural injection
$$K\otimes_{K_{0}}\bcris\hookrightarrow \bdr, $$ 
and hence $K\otimes_{K_{0}}\bcris$ is endowed with a filtration induced by that of $\bdr$.
By \cite[Prop. 2.39]{Bri06}, there is an isomorphism of rings
\begin{equation}\label{239}
f:\text{$p$-adic completion of } \ \acris^\nabla \langle u_{1},\ldots,u_{d}\rangle_{\mathrm{PD}}\simeq \acris
\end{equation} 
where $\langle *\rangle_{\mathrm{PD}}$ denotes PD-polynomials.
 
\end{notation} 

  \begin{notation} (cf. \cite[\S 2]{Moritacrys})
   Fix an element $\wt p=(s^{(n)})\in \O_C^\flat$ such that $s^{(0)}=p$.
Then, the series $\log(\wt{p}/p)$ converges  to an element $\gu$ in $\bdr^{\nabla +}$, and $\gu$ is   transcendental over $\bcris$. Let
$$ \mathbf{A}_\st:=\acris[\gu], \quad  \bst^+:=\bcris^+[\gu], \quad \bst:=\bcris[\gu];$$
all these are regarded as subrings of $\bdr$, and are independent of $\wt p$.
$\bst$ is $G_K$-stable inside $\bdr$, and   $K\otimes_{K_{0}}\bst$ is  endowed with a filtration induced by that of $\bdr$.  
We extend the Frobenius $\varphi$ on $\bcris$ to $\bst$ by setting $\varphi(\gu)=p\gu$. 
Furthermore, define the $\bcris$-derivation $N:\bst\rightarrow \bst$ by $N(\gu)=-1$.
It is easy to verify $N\varphi=p\varphi N$.
Induced by \eqref{239}, we have an isomorphism 
\begin{equation}
f:(\text{$p$-adic completion of } \ \acris^\nabla \langle u_{1},\ldots,u_{d}\rangle_{\mathrm{PD}})[1/p,\gu,1/t]\simeq \bst.
\end{equation}
\end{notation}

\begin{prop}
There are canonical isomorphisms 
$$(\bdr)^{G_K}=K, \quad (\bht)^{G_K}=K, \quad (\bcris)^{G_K}=K_{0}, \quad (\bst)^{G_K}=K_{0}.
$$
\end{prop}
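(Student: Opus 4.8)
The plan is to compute the four invariant rings in the order $\bht$, $\bdr$, then $\bcris$ and $\bst$, reducing everything to the basic facts of Tate--Sen theory in this setting, namely $C^{G_K}=K$ (Ax--Sen--Tate) and $(C(m))^{G_K}=0$ for $m\neq0$, together with the one genuinely ``imperfect'' input: the non-vanishing of the $d$ Kummer classes $c_i\in H^1(G_K,C(1))$ attached to the $t_i$ (equivalently to the $u_i$). All the structure I need is already recorded above: the presentation $\bht\simeq C[t,t^{-1},u_1/t,\dots,u_d/t]$, the isomorphism $\bdr^+\simeq\bdr^{\nabla+}[[u_1,\dots,u_d]]$ with $\bdr=\bdr^+[1/t]$, the $G_K$-equivariant inclusion $\bcris\hookrightarrow\bdr$ coming from $K\otimes_{K_0}\bcris\hookrightarrow\bdr$, and $\bst=\bcris[\gu]$.

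The heart of the matter is $\bht$. Writing $v_i=u_i/t$, the grading $\bht=\bigoplus_{n\in\mathbb{Z}}\bht_n$ with $\bht_n=t^nC[v_1,\dots,v_d]$ is $G_K$-stable, so it suffices to compute each $(\bht_n)^{G_K}$. A direct computation on generators gives $g(t)=\chi_p(g)t$ and $g(v_i)=\chi_p(g)^{-1}\big(v_i-t_i\,c_i(g)\big)$, so the action on $\bht_n$ is affine in the $v_i$, with linear part scaling a monomial of total $v$-degree $D$ by $\chi_p^{\,n-D}$. Filtering $C[v]$ by total degree and comparing top-degree terms, the leading coefficients of a $G_K$-invariant element are forced into eigenspaces on which $G_K$ acts through a non-trivial power of $\chi_p$, hence vanish by $(C(m))^{G_K}=0$ for $m\neq0$, unless $D=n$; when $D=n$ the top coefficient lies in $C^{G_K}=K$. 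This already kills everything for $n<0$ and reduces $n=0$ to $C^{G_K}=K$. For $n\geq1$ the affine (inhomogeneous) term becomes essential: once the top coefficient $a_n$ is pinned to $K$, the next equation expresses the obstruction to solving for the lower coefficient as the class $t_i\,a_n\,[c_i]\in H^1(G_K,C(1))$, which is non-zero by the imperfect Tate--Sen computation, so $a_n=0$ and one recurses down. Hence $(\bht)^{G_K}=(\bht_0)^{G_K}=K$.

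The remaining three cases are now formal. Since $\gr^n\bdr^+=\bht_n$ for $n\geq0$, an induction on $n$ using the long exact cohomology sequences of $0\to\bht_n\to\bdr^+/\Fil^{n+1}\to\bdr^+/\Fil^{n}\to0$ gives $(\bdr^+/\Fil^n)^{G_K}=K$ for all $n\geq1$; as $G_K$-invariants commute with the inverse limit defining $\bdr^+$, we get $(\bdr^+)^{G_K}=\projlim_n K=K$. A $G_K$-fixed element of $\bdr=\bigcup_m t^{-m}\bdr^+$ lies in some $t^{-m}\bdr^+$, and the same devissage (whose graded pieces $\bht_j$, $j\geq-m$, contribute invariants only at $j=0$) yields $(t^{-m}\bdr^+)^{G_K}=K$, so $(\bdr)^{G_K}=K$. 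For $\bcris$: the inclusion $K\otimes_{K_0}\bcris\hookrightarrow\bdr$ is $G_K$-equivariant ($G_K$ fixes $K$ pointwise and acts through $\bcris$), whence $(\bcris)^{G_K}\hookrightarrow(\bdr)^{G_K}=K$; choosing a $K_0$-basis of $K$ shows $K\cap\bcris=K_0$ inside $K\otimes_{K_0}\bcris$, and since $K_0$ is fixed we get $(\bcris)^{G_K}=K_0$. For $\bst=\bcris[\gu]$, any invariant lies in $(\bdr)^{G_K}=K$; comparing $\gu$-degrees (with $\gu$ transcendental, $\gu\notin\bcris$, and $g(\gu)=\gu+(\ast)t$) forces $\gu$-degree $0$, so it lies in $K\cap\bcris=K_0$, giving $(\bst)^{G_K}=K_0$.

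I expect the main obstacle to be precisely the vanishing $(\bht_n)^{G_K}=0$ for $n\geq1$: the devissage and the intersection arguments are routine, but this step genuinely depends on the non-degeneracy of the Kummer classes $[c_i]$ in $H^1(G_K,C(1))$, the feature that distinguishes the imperfect residue field case from the perfect one (where $d=0$ and no such classes exist). A cleaner-looking but equivalent alternative is to first compute invariants under $G_\bbk\subset G_K$ via the $G_\bbk$-equivariant isomorphisms $i_\varphi\colon\mathbf{B}^\nabla_\ast\simeq\mathbb{B}_\ast$ and the perfect-residue-field results, and then descend along the remaining ``horizontal'' quotient acting on the $u_i$; this merely repackages the same cohomological input.
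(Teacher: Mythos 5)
The paper does not actually give an argument here: its ``proof'' is a citation to \cite[Prop.~2.16, Prop.~2.50]{Bri06} and \cite[Prop.~2.2]{Moritacrys}. What you have written is essentially a reconstruction of the method behind those references: reduce to the graded pieces of $\bdr$, where the Galois action is the twisted affine action $g(t)=\chi_p(g)t$, $g(v_i)=\chi_p(g)^{-1}(v_i-t_ic_i(g))$, and feed in Hyodo's imperfect-residue-field Tate--Sen computations ($C^{G_K}=K$, $(C(m))^{G_K}=0$ for $m\neq 0$, and the behaviour of the Kummer classes $[c_i]\in H^1(G_K,C(1))$); then deduce the $\bcris$ and $\bst$ statements from the injections $K\otimes_{K_0}\bcris\hookrightarrow\bdr$ and $K\otimes_{K_0}\bst\hookrightarrow\bdr$. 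Your identification of the correct ``imperfect'' input --- the non-degeneracy of the classes $[c_i]$, which is exactly what makes $(\bht_n)^{G_K}=0$ for $n\geq 1$, in contrast with the perfect case --- is the right heart of the matter, and your intersection arguments for $\bcris$ and $\bst$ are sound (for $\bst$, note that comparing $\gu$-degrees against $x\in K$ uses injectivity of $K\otimes_{K_0}\bst\to\bdr$, i.e.\ transcendence of $\gu$ over $\Frac(K\otimes_{K_0}\bcris)$, which is slightly stronger than transcendence over $\bcris$ alone; the paper's setup, which endows $K\otimes_{K_0}\bst$ with the filtration induced from $\bdr$, presupposes precisely this, so it is legitimately available).

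Two repairs are needed in the details. First, your statement $\gr^n\bdr^+=\bht_n$ is false: $\bht_n=\Fil^n\bdr/\Fil^{n+1}\bdr=t^nC[v_1,\dots,v_d]$ is infinite-dimensional over $C$, whereas the graded pieces of $\bdr^+$ for its own filtration $\mathrm{fil}^n=m_{\dR}^n$ are the finite-dimensional modules $m_{\dR}^n/m_{\dR}^{n+1}$, and even for the induced filtration $\Fil^n\bdr\cap\bdr^+$ the graded pieces embed \emph{strictly} into $\bht_n$ (already in degree $0$ one gets $C\subsetneq C[v]$, since each $u_i=tv_i$ lies in $\Fil^1\bdr$). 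This does not break your d\'evissage --- you only use \emph{vanishing} of invariants of the graded pieces in nonzero degrees, which passes to submodules of $\bht_n$, or can be checked directly on $m_{\dR}^n/m_{\dR}^{n+1}$ by the same Kummer-cocycle computation --- but the step must be restated, and one should also note that separatedness/completeness of $t^{-m}\bdr^+$ for the induced filtration is what lets the limit argument run. Second, for $d>1$ the vanishing $(\bht_n)^{G_K}=0$, $n\geq 1$, requires the $K$-\emph{linear independence} of $[c_1],\dots,[c_d]$ in $H^1(G_K,C(1))$ (Hyodo), not merely that each class is nonzero: the obstruction attached to a monomial $v^\beta$ with $|\beta|=n-1$ is the linear combination $\sum_i(\beta_i+1)\,t_i\,a_{\beta+e_i}\,[c_i]$, and only independence forces all top coefficients $a_{\beta+e_i}$ to vanish. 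Your closing remark about ``non-degeneracy'' shows you are aware of this, but the body of the argument, as phrased for a single class, is insufficient as written for $d>1$. With these two points made precise, the proof is correct and is, in substance, the proof of the sources the paper cites.
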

\begin{proof}
See \cite[Prop. 2.16, Prop. 2.50]{Bri06} and \cite[Prop. 2.2]{Moritacrys}.
\end{proof}



\subsection{Fontaine functors} 
Let $V$ be a $p$-adic representation of $G_{K}$.   Define
 $$D_{\ast}(V)=(\mathbf{B}_{\ast}\otimes_{\Qp}V)^{G_K}, \text{ for }  \ast \in \{\dR, \HT, \cris, \st  \}.$$ 
 By \cite{Bri06} for $\ast \in \{\dR, \HT, \cris\}$ and by \cite{Moritacrys} for $\ast=\st$, $D_{\dR}(V)$ (resp. $D_{\HT}(V)$, resp. $D_{\cris}(V)$, resp. $D_{\st}(V)$) is a $K$- (resp.  $K$-, resp.  $K_0$-, resp.  $K_0$-) vector space of dimension $\leq \dim_\Qp V$.

\begin{defn}  
\begin{enumerate}
\item   $V$ is a called a de Rham (resp. Hodge-Tate, resp. crystalline, resp. semi-stable) representation of $G_K$ if the aforementioned $K$- or $K_0$-vector space is of dimension $\dim_\Qp V$.

\item Say $V$ is potentially  de Rham (resp. potentially Hodge-Tate, resp. potentially crystalline, resp. potentially semi-stable) if there exists a finite field extension $L/K$  such that $V|_{G_L}$ is a de Rham (resp. Hodge-Tate, resp. crystalline, resp. semi-stable) representation of $G_{L}$. 
\end{enumerate}
 \end{defn}
 
For the reader's convenience, we summarize the \emph{relations} of these notions in Thm. \ref{thmmorita} and Thm. \ref{Moritacor}. These results will \emph{not} be used in this paper.
 
\begin{thm} \label{thmmorita} \cite{MoritadR, Moritacrys}  Let $V$ be a $p$-adic representation of $G_{K}$. 
\begin{enumerate}

\item $V$ is  de Rham $\Longleftrightarrow$ $V$ is  potentially de Rham  $\Longleftrightarrow$ $V|_{G_{\bbk}}$ is   de Rham  (as a representation of     $G_{\bbk}$, similar in the following).

\item $V$ is  Hodge-Tate $\Longleftrightarrow$ $V$ is  potentially Hodge-Tate  $\Longleftrightarrow$ $V|_{G_{\bbk}}$ is  Hodge-Tate.

\item $V$ is  potentially crystalline  $\Longleftrightarrow$ $V|_{G_{\bbk}}$ is  potentially crystalline.

\item \label{item4morita} $V$ is  potentially semi-stable  $\Longleftrightarrow$ $V|_{G_{\bbk}}$ is  potentially semi-stable.
\end{enumerate}
\end{thm}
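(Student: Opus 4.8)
The plan is to reduce every assertion to the classical perfect-residue-field theory for $G_\bbk$ (Sen--Tate theory, Colmez--Fontaine \cite{CF00}, and the $p$-adic monodromy theorem), transported across the structural comparison between the Fontaine rings for $K$ and those for $\bbk$. The essential tool is Notation \ref{no222}: the isomorphisms $i_\varphi\colon\mathbf B_\ast^\nabla\simeq\mathbb B_\ast$ embed the perfect-residue period rings inside the imperfect ones $G_\bbk$-equivariantly and compatibly with $\varphi$, $N$ and the filtration, while the full rings are obtained from the $\nabla$-rings by adjoining the horizontal variables $u_1,\dots,u_d$ (power series for $\bdr^+$, polynomials in the $u_i/t$ for $\bht$, divided powers for $\acris$ and $\bst$). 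Accordingly, for each $\ast\in\{\dR,\HT,\cris,\st\}$ one compares $D_\ast^K(V)=(\mathbf B_\ast\otimes_{\Qp}V)^{G_K}$ with $D_\ast^{\bbk}(V|_{G_\bbk})=(\mathbb B_\ast\otimes_{\Qp}V)^{G_\bbk}$, and the whole theorem is the statement that the horizontal variables $u_i$ contribute nothing to admissibility.

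First I would dispose of the Hodge-Tate case (2), which is the cleanest. Since $i_\varphi\colon C\simeq\mathbb C$ is $G_\bbk$-equivariant, being Hodge-Tate is detected on the $C$-semilinear representation $C\otimes_{\Qp}V$, and the Hodge-Tate weights are the eigenvalues of the arithmetic Sen operator; the extra Sen operators attached to the horizontal directions $u_i$ do not affect the diagonalizability criterion. As the arithmetic Sen operator is unchanged by perfecting the residue field and by finite base change, $V$ is Hodge-Tate over $G_K$ if and only if $V|_{G_\bbk}$ is, and both are insensitive to replacing $K$ by a finite extension, which yields all three equivalences simultaneously.

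The de Rham case (1) is the heart of the matter. Using $\bdr^+\simeq\bdr^{\nabla+}[[u_1,\dots,u_d]]$ with $\bdr^{\nabla}\simeq\bbdr$, admissibility transfers formally in one direction, giving $\dim_K D_{\dR}^K(V)\le\dim_\bbk D_{\dR}^{\bbk}(V|_{G_\bbk})$, so that $V$ de Rham over $G_K$ forces $V|_{G_\bbk}$ de Rham. The hard part will be the reverse descent: I would differentiate in the $u_i$ to obtain the connection $\nabla$ and show, by a Poincar\'e-lemma computation in these variables, that the $G_K$-invariants of $\bdr\otimes_{\Qp}V$ are exactly the $\nabla$-horizontal, $G_\bbk$-invariant sections, so that de Rham-ness over $\bbk$ produces a full-rank $K$-structure. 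The ``potentially'' clause then closes the loop: if $V|_{G_L}$ is de Rham for some finite $L/K$, the easy direction applied to $L$ makes $V$ de Rham over $G_{\bbk_L}$ for the perfect-residue field $\bbk_L$ attached to $L$, and since de Rham equals potentially de Rham in the perfect case, finite descent over $\bbk$ gives $V|_{G_\bbk}$ de Rham.

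Finally, the potentially crystalline and semistable statements (3)--(4) I would obtain as formal consequences of (1) together with the $p$-adic monodromy theorem. In the perfect-residue case de Rham is equivalent to potentially semistable, so $V|_{G_\bbk}$ potentially semistable $\Leftrightarrow V|_{G_\bbk}$ de Rham $\Leftrightarrow$ (by (1)) $V$ de Rham over $G_K\Leftrightarrow V$ potentially semistable over $G_K$, the last equivalence being the $p$-adic monodromy theorem in the imperfect residue field setting (the analogue of \cite{Shipst}, supplied by \cite{MoritadR,Moritacrys}); this proves (4). For (3) one refines by tracking the monodromy operator: since $i_\varphi$ identifies $\bst^\nabla$ with $\bbst$ compatibly with $N$, the operator $N$ on the limiting semistable module is the same whether computed over $K$ or over $\bbk$, so its vanishing---potential crystallinity---matches on the two sides. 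The main obstacle is thus twofold: the descent step in (1), i.e.\ proving that passing to $G_K$-invariants annihilates precisely the horizontal variables (which is where the connection $\nabla$ genuinely enters, and is the reason crystalline and semistable---as opposed to their potential versions---cannot be read off from $G_\bbk$ alone), and the availability of the $p$-adic monodromy theorem over $K$.
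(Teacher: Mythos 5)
The first thing to say is that the paper contains no proof of Thm.~\ref{thmmorita} for you to match: it is imported wholesale from \cite{MoritadR} (items (1)--(2)) and \cite{Moritacrys} (items (3)--(4)), with the proof consisting of those citations, so your sketch must stand on its own --- and at the two places where all the work lies, it asserts rather than proves. In item (1), your claimed description of the invariants cannot be literally correct: $(\bdr\otimes_{\Qp}V)^{G_K}$ is a $K$-vector space, while the $\nabla$-horizontal $G_{\bbk}$-invariant sections of $\bbdr\otimes_{\Qp}V$ form a $\bbk$-vector space, and $G_{\bbk}$ is a closed subgroup of $G_K$ of infinite index, so $G_K$-invariance is strictly stronger than $G_{\bbk}$-invariance plus horizontality. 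A Poincar\'e lemma in $u_1,\dots,u_d$ only disposes of the geometric directions: it gives $(\bdr)^{\nabla=0}=\bdr^{\nabla}$, and for a module \emph{already of the form} $D_K\otimes_K\bdr$ it gives \cite[Cor.~4.31]{Bri06} (which is exactly how it is deployed in Step~2 of the proof of Thm.~\ref{thmCF}); it does not manufacture a $K$-rational structure out of a $\bbk$-rational one. That arithmetic descent from $\bbk$ to $K$ is the actual content of Morita's theorem, carried out in \cite{MoritadR} by a Tate--Sen-style argument resting on the computation of the Galois cohomology of $C(j)$ in the imperfect residue field case (Hyodo), where extra contributions from the $d$ geometric directions appear --- precisely the phenomenon your Hodge--Tate paragraph dismisses with ``the extra Sen operators do not affect the diagonalizability criterion'', which is the statement to be proved, not an input.

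Second, your derivation of item (4) is circular within the logical architecture of this paper and of \cite{Moritacrys}. You close the loop with the equivalence ``$V$ de Rham over $G_K$ $\Longleftrightarrow$ $V$ potentially semi-stable over $G_K$'', citing it as supplied by \cite{MoritadR,Moritacrys}; but that is Thm.~\ref{Moritacor}, and Morita's proof of it is \emph{deduced from} items (1) and (4) of the present theorem together with Berger's theorem \cite{Ber02} over $\bbk$. The circle can be broken, since Ohkubo \cite{Ohk13} proves the monodromy theorem over $K$ independently (the paper notes this alternative), but your write-up should invoke \cite{Ohk13} explicitly and acknowledge that item (4) then rests entirely on the unproven hard direction of item (1). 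Relatedly, in item (3) the ring identification $i_\varphi:\bst^{\nabla}\simeq\bbst$ compatible with $N$ does not by itself compare the two semi-stable modules, because for $L/K$ finite with $V|_{G_L}$ semi-stable the module $D_{\st}(V|_{G_L})$ need not lie in $\bst^{\nabla}\otimes_{\Qp}V$; one needs an $N$- and $\varphi$-equivariant comparison of $D_{\st}$ over $L$ with $D_{\st}$ over its perfect-residue counterpart, i.e.\ the semi-stable analogue of \cite[Prop.~4.32, Cor.~4.33]{Bri06} established in Step~2 of the proof of Thm.~\ref{thmCF} --- this must be constructed, not assumed. In short: the reduction strategy is reasonable and broadly parallels how these results are proved in the literature, but the two genuine theorems (the $\bbk$-to-$K$ descent in (1)--(2), and a non-circular source for the monodromy theorem in (4)) are exactly the gaps in your proposal.
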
 
\begin{proof}
Items (1) and (2) are proved in \cite{MoritadR}; Items (3) and (4) are proved in \cite{Moritacrys}.
\end{proof}



\begin{thm}\label{Moritacor}
Let $V$ be a $p$-adic representation of $G_{K}$. Then $V$ is  de Rham  $\Longleftrightarrow$ $V$ is  potentially semi-stable. (Hence conditions in Items (1) and (4) of Thm. \ref{thmmorita} are all equivalent.)
\end{thm}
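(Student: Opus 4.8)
The plan is to reduce the statement to the \emph{perfect} residue field case, where the equivalence ``de Rham $\Leftrightarrow$ potentially semi-stable'' is the classical $p$-adic monodromy theorem (due to Berger). The entire reduction is supplied by Theorem \ref{thmmorita}, whose Items (1) and (4) translate, respectively, the de Rham property and the potential semi-stability of $V$ as a $G_K$-representation into the corresponding properties of the restriction $V|_{G_\bbk}$. Since all the analytic difficulty has already been absorbed into those two items, what remains is essentially formal.

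First I would record that $\bbk$ is a CDVF with \emph{perfect} residue field, so that the classical theory is available for $G_\bbk$-representations. Indeed $\bbk_0 = W(\mathbf{k})[1/p]$, where $\mathbf{k}$ is the radical closure of $k_K$; thus $\mathbf{k}$ is perfect and $\bbk_0$ is absolutely unramified with perfect residue field. Moreover $\bbk = \bbk_0\, i_\varphi(K)$ is a finite extension of $\bbk_0$ (as $\oko \hookrightarrow W(\mathbf{k})$ via $i_\varphi$ and $[K:K_0]=e_K<\infty$), so $\bbk$ is again a CDVF whose residue field is a finite extension of $\mathbf{k}$, hence perfect.

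The proof is then read off from the chain of equivalences
\begin{align*}
V \text{ de Rham}
&\overset{(1)}{\Longleftrightarrow} V|_{G_\bbk}\text{ de Rham} \\
&\overset{\text{Berger}}{\Longleftrightarrow} V|_{G_\bbk}\text{ potentially semi-stable} \\
&\overset{(4)}{\Longleftrightarrow} V\text{ potentially semi-stable},
\end{align*}
where the outer equivalences are Items (1) and (4) of Theorem \ref{thmmorita} and the middle one is the $p$-adic monodromy theorem applied to $G_\bbk$. Tracking the intermediate conditions simultaneously yields the parenthetical assertion: the five conditions occurring in Items (1) and (4)---namely $V$ de Rham, $V$ potentially de Rham, $V|_{G_\bbk}$ de Rham, $V|_{G_\bbk}$ potentially semi-stable, and $V$ potentially semi-stable---are pairwise equivalent.

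I do not expect a genuine obstacle at this final step, precisely because Theorem \ref{thmmorita} already performs the nontrivial descent of these notions to the perfect residue field situation, and Berger's theorem supplies the only deep input. The single point deserving care is the verification that $\bbk$ has perfect residue field, which is immediate from the construction of $\mathbf{k}$ in \S\ref{sec21}; after that, the argument is just the composition of known equivalences.
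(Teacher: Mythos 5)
Your proposal is correct and is essentially identical to the paper's own (first) proof, which likewise deduces the theorem by combining Items (1) and (4) of Thm.~\ref{thmmorita} with Berger's $p$-adic local monodromy theorem applied to $V|_{G_\bbk}$, using that $\bbk$ has perfect residue field. (The paper additionally notes an alternative proof via \cite{Ohk13} valid even when $[k_K:k_K^p]=+\infty$, but that is a separate route.)
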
 
\begin{proof}
The first proof is due to Morita: it is  an easy consequence of Items (1) and (4) of Thm. \ref{thmmorita}, together with Berger's $p$-adic local  monodromy theorem  \cite{Ber02} (for de Rham representations of $G_{\bbk}$).
Alternatively, another proof which works even when $[k:k^p]=+\infty$ is supplied by \cite{Ohk13}.
\end{proof} 

 \subsection{Weakly admissible implies admissible}
 In this subsection, we prove the main result of this section, namely the Colmez-Fontaine theorem in the imperfect residue field case. In order to do so, we need to introduce connection operators on various rings and modules.
 
\begin{notation}
  Let
$$ \wh{\Omega}_\oko:= \projlim_{n >0} \Omega^1_{\mathcal{O}_{K_0}/\mathbb Z}/p^n\Omega^1_{\mathcal{O}_{K_0}/\mathbb Z}$$
be the $p$-adically continuous K\"ahler differentials, and let 
$$\wh\Omega_{K_0}=\wh{\Omega}_\oko\otimes_\oko K_0.$$ 
$\wh{\Omega}_\oko$ is a free $\oko$-module with a set of basis $\{d\log(t_i)\}_{1\leq i \leq d}$, where $t_1, \cdots, t_d \in \oko$ are introduced in \S \ref{sec21} as a lift of a $p$-basis of $k_K$.
Let 
$$d: \oko \to \wh{\Omega}_{\oko}$$ 
be the canonical differential map.
For $D$ a $\oko$-module, a connection on $D$ is an additive map 
$$\nabla: D \to D\otimes_{\oko} \wh{\Omega}_\oko$$ satisfying Leibniz law with respect to $d$.
Let $\wh{\Omega}_\ok$ and $d: \ok \to \wh{\Omega}_\ok$ be similar defined; then  one can also define connections on $\ok$-modules.
\end{notation}

\begin{notation} \label{drnabla}
Recall that we have $\bdr^+ =\bdr^{\nabla +}[[u_1, \cdots, u_d]]$. For $1\leq i \leq d$, let $N_i$ be the unique $\bdr^{\nabla +}$-derivation of $\bdr^+$ such that 
$$N_i(u_j)=\delta_{i, j}u_j$$ 
where $\delta_{i, j}$ is the Kronecker symbol; hence in particular we have $N_i(t)=0, \forall i$, and thus $N_i$ extends to a $\bdr^{\nabla}$-derivation of $\bdr$. Define a map
\begin{equation}
\nabla:\bdr \to \bdr\otimes_\ko \wh{\Omega}_\ko, \quad x \mapsto \sum_{i=1}^d N_i(x)\otimes d \log(t_i);
\end{equation}
the connection is integrable as $N_i$'s commute with each other. Note that 
$$(\bdr)^{\nabla=0} =\bdr^\nabla,$$
which explains the notation  ``$\bdr^\nabla$" in \S \ref{no222}.
We list some basic properties of $\nabla$.
\begin{enumerate}
\item By \cite[Prop. 2.23]{Bri06}, $\nabla$ satisfies Griffith transversality, i.e., 
$$\nabla(\Fil^r \bdr) \subset \Fil^{r-1}\bdr \otimes_\ko \wh{\Omega}_\ko.$$
\item By \cite[Prop. 2.24]{Bri06}, $\nabla$ commutes with $G_K$-action.
\item By \cite[Prop. 2.25]{Bri06}, $\nabla|_K$ is precisely the canonical differential $d: K \to K \otimes_\ko \wh{\Omega}_\ko.$
\end{enumerate}
\end{notation}

\begin{notation}\label{n243}
One can easily check (cf. \cite[p. 950]{Bri06}) that via $\bcris \into \bdr$,  $\nabla$ on  $\bdr$ induces an integrable connection
$$\nabla: \bcris \to \bcris \otimes_\ko \wh{\Omega}_\ko.$$
Since $\gu \in \bdr^{\nabla +}$ and hence $\nabla(\gu)=0$, we  further have an integrable connection
$$\nabla: \bst \to \bst \otimes_\ko \wh{\Omega}_\ko.$$
We have
$$ (\bcris)^{\nabla=0} =\bcris^\nabla, \quad  (\bst)^{\nabla=0} =\bst^\nabla. $$ 
By \cite[Prop. 2.58]{Bri06}, $\varphi \nabla= \nabla\varphi$ over $\bcris$ (and hence also over $\bst$); here the $\varphi$-action on $\wh{\Omega}_\ko$ extends the $\varphi$-action on $\ko$ and $\varphi(dt_i)=d(\varphi(t_i))$.
In addition, we obviously have $N \nabla= \nabla N$ over $\bst$, where $N=0$  on $\wh{\Omega}_\ko$.
\end{notation}

\begin{defn} \cite[Def. 4.3]{Bri06}
\begin{enumerate}
\item Given a $p$-adic separated and complete $\oko$-module $\mathcal{D}$, a connection  $\nabla: \mathcal{D} \to \mathcal{D}\otimes_{\oko} \wh\Omega_\oko$ is called \emph{quasi-nilpotent} if the induced connection
$$ \nabla: \mathcal{D}/p\mathcal{D} \to \mathcal{D}/p\mathcal{D} \otimes_\oko \wh{\Omega}_\oko$$
is nilpotent.

\item  Given a $\ko$-vector space $D$, a connection  $\nabla: D \to D\otimes_{K_0} \wh\Omega_\ko$ is called \emph{quasi-nilpotent} if there exists an $\oko$-submodule $\mathcal{D}\subset D$ which is $p$-adic separated and complete such that $\mathcal{D}[1/p]=D$ and 
$$\nabla(\mathcal{D}) \subset \mathcal{D}\otimes_\oko \wh{\Omega}_\oko,$$
and such that the restricted connection $\nabla \mid_{\mathcal D}$ is quasi-nilpotent as in Item (1) above.
\end{enumerate}
\end{defn}

\begin{defn}\label{deffilnmod}
Let $\MFnabla$ be the category of filtered $(\varphi,N)$-modules with connections which consists of  finite dimensional $K_0$-vector spaces $D$ equipped with
\begin{enumerate}
\item a $\varphi_{K_0}$-semi-linear Frobenius $\varphi: D \to D$ such that \emph{$\varphi(D)$ generates $D$ over $K_0$};
\item a monodromy $N: D \to D$, which is a $K_{0}$-linear map such that $N\varphi=p\varphi N$;
\item   an integrable quasi-nilpotent connection $\nabla: D \to D\otimes_{\ko} \wh{\Omega}_\ko$, which commutes with $\varphi$ and $N$ (here on the right hand side, $\varphi=\varphi_D\otimes \varphi_{\wh{\Omega}_\ko}$  and $N=N_D\otimes 1$, cf. \S \ref{n243} for $\varphi$ and $N$ over $\wh{\Omega}_\ko$); 
\item a decreasing, separated and exhaustive filtration $(\Fil^{i}D_{K})_{i\in\mathbb{Z}}$ on $D_{K}=D\otimes_{K_0} K$, by  $K$-vector subspaces, such that 
$$\nabla(\Fil^{i}D_{K}) \subset \Fil^{i-1}D_{K}, \forall i,$$
where $\nabla: D_K \to D_K\otimes_K \wh{\Omega}_K$ is the induced connection (satisfying Leibniz law with respect to $d: K \to \wh{\Omega}_{K}$).
\end{enumerate}
\end{defn}

\begin{rem}\label{remdimp}
When $K$ has imperfect residue field, $\varphi$ on $K_0$ is not necessarily bijective; hence we need to require ``$\varphi(D)$ generates $D$ over $K_0$" in Def. \ref{deffilnmod}. This implies that
$$ K_0\otimes_{\varphi, K_0}D \xrightarrow{1\otimes \varphi} D$$
is a  $K_0$-linear  isomorphism.
\end{rem}

\begin{prop}
Let $\Rep_{\Qp}^{\st}(G_K)$ be the category of semi-stable representations of $G_K$.
 Then $D_\st$ induces a functor
$$  \Rep_{\Qp}^{\st}(G_K) \to \MFnabla.$$
\end{prop}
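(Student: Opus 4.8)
The plan is to equip $D:=D_\st(V)=(\bst\otimes_{\Qp}V)^{G_K}$ with the four structures of Definition \ref{deffilnmod} by transporting to it the corresponding structures that already exist on the period ring $\bst$ (and on $\bdr$ for the filtration), and then to check that each descends to $G_K$-invariants and satisfies the required compatibilities. Since $V$ is semi-stable, $D$ is a $K_0$-vector space with $\dim_{K_0}D=\dim_{\Qp}V$ by the cited results of Brinon and Morita. The guiding principle is that an operator on $\bst$ commuting with the $G_K$-action induces, via $\otimes\,\mathrm{id}_V$ (with $V$ carrying only its Galois action), an operator on $\bst\otimes_{\Qp}V$ preserving the $G_K$-fixed vectors, hence an operator on $D$; the algebraic identities relating these operators are then inherited verbatim.

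First I would define $\varphi_D$ and $N_D$. The Frobenius $\varphi_{\bst}$ is $\varphi_{K_0}$-semilinear and commutes with $G_K$, so $\varphi_{\bst}\otimes\mathrm{id}$ restricts to a $\varphi_{K_0}$-semilinear endomorphism $\varphi_D$ of $D$. Likewise $N_{\bst}$ is a $\bcris$-derivation, in particular $K_0$-linear, and commutes with $G_K$, giving a $K_0$-linear $N_D$; the identity $N\varphi=p\varphi N$ on $\bst$ passes directly to $D$. The nontrivial axiom here is that $\varphi_D(D)$ generates $D$ over $K_0$, equivalently (Remark \ref{remdimp}) that $1\otimes\varphi\colon K_0\otimes_{\varphi,K_0}D\to D$ is an isomorphism. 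Since $N$ plays no role in this statement, I would reduce it to the underlying $\varphi$-module and invoke Brinon's corresponding assertion for the crystalline Fontaine module in \cite{Bri06}.

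Next I would produce the connection and the filtration. By \cite[Prop. 2.24]{Bri06} the connection $\nabla$ on $\bdr$ commutes with $G_K$, and by \S\ref{n243} it restricts to an integrable connection on $\bst$ commuting with $\varphi$ and $N$; applying $\otimes\,\mathrm{id}_V$ and taking $G_K$-invariants yields an integrable $\nabla_D\colon D\to D\otimes_{\ko}\wh{\Omega}_\ko$ commuting with $\varphi_D$ and $N_D$. For the filtration I use that a semi-stable $V$ is de Rham, so the inclusion $K\otimes_{K_0}\bst\hookrightarrow\bdr$ induces an isomorphism $D_K=K\otimes_{K_0}D\xrightarrow{\sim}D_{\dR}(V)$; pulling back $\Fil^iD_{\dR}(V)=(\Fil^i\bdr\otimes_{\Qp}V)^{G_K}$ defines the filtration on $D_K$, and Griffith transversality $\nabla(\Fil^iD_K)\subset\Fil^{i-1}D_K\otimes_{K}\wh{\Omega}_K$ follows from the same property of $\nabla$ on $\bdr$ (\cite[Prop. 2.23]{Bri06}). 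Functoriality is immediate, as all the constructions are natural in $V$.

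The two steps I expect to be genuine content rather than formal bookkeeping are (i) the generating condition for $\varphi_D$ and (ii) the quasi-nilpotence of $\nabla_D$. Both reflect the fine structure of the period rings in the imperfect residue field case: the non-surjectivity of $\varphi_{K_0}$, and the PD-presentation $\acris\cong\acris^\nabla\langle u_1,\dots,u_d\rangle_{\mathrm{PD}}$ of \eqref{239} underlying $\nabla$. For (i) I would argue via the underlying $\varphi$-module as above. For (ii) I would follow Brinon's argument: his notion of quasi-nilpotence (Definition 4.3, reproduced above) is introduced precisely to capture the connection arising on these Fontaine modules, and since $\nabla_D$ commutes with $N_D$ and does not depend on the monodromy, the semi-stable case reduces to the de Rham/crystalline setting Brinon already treats.
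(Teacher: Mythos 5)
Your transport-of-structure construction of $\varphi_D$, $N_D$, $\nabla_D$ and the filtration is exactly the route the paper takes (its proof is a two-line reduction to \cite[Prop.~4.19]{Bri06}), but your step (ii) --- quasi-nilpotence of $\nabla_D$ --- contains a genuine gap, and it is precisely the one point where the paper says something nontrivial. You assert that, since $\nabla_D$ commutes with $N_D$ and ``does not depend on the monodromy'', the semi-stable case ``reduces to the de Rham/crystalline setting Brinon already treats.'' It does not: for a semi-stable but non-crystalline $V$, the module $D=D_\st(V)$ is not the crystalline Fontaine module of any representation, so there is no statement in \cite{Bri06} that applies to it. Nor can you split off the monodromy: although $\bst=\oplus_{i\geq 0}\,\bcris\cdot\gu^i$ with $\nabla(\gu)=0$, this decomposition is not $G_K$-stable (one has $g(\gu)=\gu+c(g)t$), so the periods of elements of $D$ genuinely involve $\gu$, and the $\acris$-based lattice $D\cap(\acris\otimes_{\Zp}T)$ that Brinon's argument uses (for $T\subset V$ a $G_K$-stable lattice) need no longer generate $D$ over $\ko$. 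A d\'evissage along the kernel filtration of $N_D$ also fails at this stage: its graded pieces are merely objects of $\MFnabla$, not known to be of the form $D_\cris(V')$ for crystalline $V'$ --- admissibility is only settled later (Thm.~\ref{thmCF}), so such a reduction would be circular.

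What the paper actually does is rerun Brinon's proof of \cite[Prop.~4.19]{Bri06} with $\acris$ replaced by $\mathbf{A}_\st=\acris[\gu]$. This is a one-line fix, but it is the fix you must make explicitly: since $\nabla(\gu)=0$ (cf.\ \S\ref{n243}), the connection preserves $\mathbf{A}_\st$, and via the presentation \eqref{239} its reduction modulo $p$ remains quasi-nilpotent (the derivations $N_i$ act quasi-nilpotently on the PD-variables $u_j$ and kill $\gu$); hence the $p$-adically separated and complete $\nabla$-stable $\oko$-lattice that Brinon manufactures from $\acris$-periods has an exact analogue built from $\mathbf{A}_\st$-periods, and this lattice witnesses quasi-nilpotence of $\nabla_D$. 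A milder instance of the same slip occurs in your step (i): you cannot literally ``invoke Brinon's corresponding assertion for the crystalline Fontaine module'' to see that $\varphi_D(D)$ generates $D$ over $K_0$, since $D$ is not such a module; you must either rerun the crystalline argument with $\bst$ in place of $\bcris$ (it goes through verbatim) or cite the semi-stable theory of \cite{Moritacrys}.
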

\begin{proof}
The $N=0$ (i.e., crystalline) case is proved in  \cite[Prop. 4.19]{Bri06}. The semi-stable case follows from similar argument; in particular, to show that the induced connection on $D_\st(V)$ is quasi-nilpotent, one simply replaces the use of $\acris$ in \emph{loc. cit.} by $\mathbf{A}_\st$.
\end{proof}

\begin{defn}
\begin{enumerate}
\item  Let $\MFa \subset \MFnabla$ be the   sub-category defined by the essential image  of $D_\st$; the objects in $\MFa$ is called \emph{admissible} objects.
\item For $D\in \MFnabla$, one can define the Newton number $t_N(D)$ and the Hodge number $t_H(D)$ with the usual recipe, cf. \cite[\S 4.1]{Bri06}. $D$ is called \emph{weakly admissible} if $t_N(D)=t_H(D)$ and $t_N(D')\leq t_H(D')$ for any sub-object (in the category $\MFnabla$) $D' \subset D$. Denote the sub-category of weakly admissible objects as $\MFnablawa$.
\end{enumerate}
 \end{defn}

\begin{theorem}\label{thmCF}
The functor $D_\st$ induces an equivalence of categories 
$$\Rep_{\Qp}^{\st}(G_K) \simeq \MFnablawa.$$ 
A quasi-inverse is given by
$$V_{\st}(D)   =   (\bst \otimes_{K_0} D)^{\varphi=1, N=0, \nabla=0} \cap \Fil^0(D_K \otimes_K \bdr).  $$  
\end{theorem}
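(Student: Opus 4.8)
The plan is to bootstrap from the classical Colmez--Fontaine equivalence \cite{CF00} over the CDVF $\bbk$, which has perfect residue field, using the inclusion $G_{\bbk}\subset G_K$ of \S\ref{sec21} together with the $i_\varphi$-equivariant isomorphisms $\mathbf{B}^{\nabla}_{\ast}\simeq \mathbb{B}_{\ast}$ of Notation \ref{no222}; throughout, the connection $\nabla$ is the extra datum that mediates between $G_{\bbk}$- and $G_K$-representations. Since the crystalline case is \cite[Prop. 4.19]{Bri06}, I would transcribe that argument with $\acris,\bcris$ replaced by $\mathbf{A}_{\st},\bst$, carrying the monodromy $N$ along via $N\varphi=p\varphi N$ and $N\nabla=\nabla N$ (\S\ref{n243}). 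There are three things to establish: that $D_{\st}$ lands in $\MFnablawa$; that $D_{\st}$ is fully faithful with the displayed $V_{\st}$ as quasi-inverse on its image; and that $D_{\st}$ is essentially surjective onto $\MFnablawa$.

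For the first, that $D_{\st}(V)$ is weakly admissible, I would compare with the perfect case: the base change $\bbk_0\otimes_{K_0}D_{\st}(V)$ is identified through $i_\varphi$ with $(\bbst\otimes_{\Qp}V)^{G_{\bbk}}=D^{\bbk}_{\st}(V|_{G_{\bbk}})$, which is weakly admissible by \cite{CF00}; as $t_N$ and $t_H$ are unchanged under this base change and every $\nabla$-stable $K_0$-subobject of $D_{\st}(V)$ yields a $\varphi,N$-stable $\bbk_0$-subobject, the inequalities $t_N\leq t_H$ descend to $D_{\st}(V)$. For full faithfulness and the quasi-inverse, I would invoke the fundamental comparison $V\otimes_{\Qp}\bst\simeq D_{\st}(V)\otimes_{K_0}\bst$, compatible with $\varphi,N,\nabla$ and, after $\otimes_K\bdr$, with the filtration; imposing $\varphi=1$, $N=0$, $\nabla=0$ and intersecting with $\Fil^0(D_{\st}(V)_K\otimes_K\bdr)$ then returns $V$, so that $V\simeq V_{\st}(D_{\st}(V))$ and $D_{\st}$ is fully faithful.

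The substance is essential surjectivity. Given $D\in\MFnablawa$, I would set $V_{\st}(D)$ equal to the displayed intersection. Because $\varphi,N,\nabla$ and the filtration on $\bst$ and $\bdr$ are all $G_K$-equivariant (\S\ref{drnabla}(2), \S\ref{n243}) while $D$ is a trivial $G_K$-module, $V_{\st}(D)$ is automatically a $\Qp$-vector space carrying a continuous $G_K$-action, and the usual argument bounds $\dim_{\Qp}V_{\st}(D)\leq\dim_{K_0}D$. To force equality I would reduce to $\bbk$: form $D_{\bbk_0}:=\bbk_0\otimes_{K_0}D$, a filtered $(\varphi,N)$-module over $\bbk_0$ (the connection dies since $\wh{\Omega}_{\bbk_0}=0$), with the filtration on $(D_{\bbk_0})_{\bbk}$ transported from that of $D_K$ through $\bdr^{+}=\bdr^{\nabla+}[[u_1,\dots,u_d]]$. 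Using the PD-description of $\mathbf{A}_{\st}$ over $\acris^{\nabla}\langle u_1,\dots,u_d\rangle_{\mathrm{PD}}$ together with $i_\varphi$, I would solve the equation $\nabla=0$ in the $u_i$-directions --- a Poincar\'e-lemma / parallel-transport computation whose convergence is guaranteed by quasi-nilpotence of $\nabla$ --- to obtain $(\bst\otimes_{K_0}D)^{\nabla=0}\simeq\bbst\otimes_{\bbk_0}D_{\bbk_0}$ compatibly with $\varphi$, $N$ and $\Fil^0$. This identifies $V_{\st}(D)$ with the perfect-case construction $V^{\bbk}_{\st}(D_{\bbk_0})$ as $\Qp$-spaces, the $G_K$-action on the former extending the $G_{\bbk}$-action on the latter; so if $D_{\bbk_0}$ is weakly admissible over $\bbk$, then \cite{CF00} gives $\dim_{\Qp}V_{\st}(D)=\dim_{\bbk_0}D_{\bbk_0}=\dim_{K_0}D$, whence $V_{\st}(D)$ is semi-stable with $D_{\st}(V_{\st}(D))\simeq D$.

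The main obstacle is exactly the claim that weak admissibility descends, namely that $D\in\MFnablawa$ forces $D_{\bbk_0}$ to be weakly admissible over $\bbk$. The difficulty is the asymmetry noted above: weak admissibility of $D$ is tested only against $\nabla$-stable $K_0$-subobjects, whereas over $\bbk_0$ one must verify $t_N\leq t_H$ for \emph{every} $\varphi,N$-stable $\bbk_0$-subspace of $D_{\bbk_0}$, and such a subspace need not be the base change of a $\nabla$-stable $K_0$-subobject (indeed $\nabla$ is not even recoverable from $D_{\bbk_0}$). My approach would be to bound an arbitrary $\varphi,N$-stable $W\subset D_{\bbk_0}$ in terms of a suitable $\nabla$-stable $K_0$-subobject of $D$, exploiting Griffith transversality $\nabla(\Fil^i)\subset\Fil^{i-1}$ (\S\ref{drnabla}) and the quasi-nilpotence of $\nabla$ to propagate the inequality supplied by weak admissibility of $D$ to $W$ with its transported filtration. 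Carrying this out --- in particular matching the Hodge numbers under the transported filtration --- is the technical heart and the one genuinely new point beyond the perfect-residue-field theory; everything else should be a faithful adaptation of \cite{Bri06} and \cite{Moritacrys} to the semi-stable setting.
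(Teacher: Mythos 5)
Your first two stages track the paper's own proof closely: weak admissibility of $D_\st(V)$ and full faithfulness correspond to the paper's Step 1 (the semi-stable analogues of \cite[Thm. 4.24, Prop. 4.26--4.28]{Bri06}, where one also needs the observation that a one-dimensional object of $\MFnabla$ automatically has $N=0$), and your comparison $(\bst\otimes_{K_0}D)^{\varphi=1,N=0,\nabla=0}\simeq(\bbst\otimes_{\bbko}D_{\bbko})^{\varphi=1,N=0}$ together with the filtered de Rham comparison is the paper's Step 2, i.e. \eqref{eqbstkpfo} (the paper deduces the $\bst$-statement from the crystalline case \cite[Prop. 4.32]{Bri06} via the decomposition $\bst=\oplus_{i\geq 0}\bcris\cdot\gu^i$, whereas you propose to redo the Poincar\'e-lemma computation directly; either route is fine). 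The genuine gap is at the decisive step, and you flag it yourself: you make essential surjectivity depend on the claim that weak admissibility \emph{descends}, i.e. that $D\in\MFnablawa$ forces $D_{\bbko}$ to be weakly admissible in $\MFkpf$, and you offer only an unproved plan (bounding an arbitrary $(\varphi,N)$-stable $W\subset D_{\bbko}$ using Griffith transversality and quasi-nilpotence). This is exactly the statement the paper declines to prove directly --- its proof of Thm. \ref{thmCF} says in so many words that ``a priori, it is not clear if $D_\bbk$ is weakly admissible in $\MFkpf$ (although we will see later it is)''. Your own observation undercuts the plan: a $(\varphi,N)$-stable $\bbk_0$-subspace of $D_{\bbko}$ need not arise from any $\nabla$-stable $K_0$-subobject, and Griffith transversality and quasi-nilpotence constrain the filtration and connection on $D$ itself, not the transported filtration on an arbitrary $W$; no argument of this elementary kind is known, and weak admissibility of $D_{\bbko}$ is in fact obtained only \emph{a posteriori}, as a corollary of the theorem, never as an input.

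The paper's Step 3 shows how to close the loop without any descent claim. From Step 1 one already knows, for weakly admissible $D$, that $V:=V_\st(D)$ is finite dimensional and semi-stable and that $D':=D_\st(V)\subset D$ is a subobject in $\MFnabla$; Step 2 identifies $V\simeq\mathbb{V}_\st(D_{\bbko})$. Since $t_N$ and $t_H$ are unchanged under $\bbko\otimes_{K_0}(-)$, one has $t_N(D_{\bbko})=t_H(D_{\bbko})$, and the argument of the final two paragraphs of \cite[Thm. 4.34]{Bri06}, resting on the Colmez--Fontaine criterion \cite[Thm. 4.3.(ii)]{CF00}, then forces $D'=D$, i.e. $D$ is admissible --- with weak admissibility of $D_{\bbko}$ falling out as a consequence. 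To repair your proposal, replace the unproved ``descent of weak admissibility'' by this appeal to Step 1 combined with \cite[Thm. 4.3.(ii)]{CF00}.
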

\begin{proof}
The proof follows   the same strategy of   \cite[Thm. 4.34]{Bri06} (the crystalline case).
Here, we content ourselves by giving a sketch of the argument; in particular, we point out  how the many \emph{ingredients} in \cite[\S 4.2]{Bri06} still hold in the semi-stable case. In the following, let
$$D \in \MFnablawa, \quad   V=V_{\st}(D). $$ 

\textbf{Step 1:}
 By the semi-stable version (see later) of \cite[Prop. 4.28]{Bri06}, we know that $\dim_{\Qp} V <\infty$, and $V$ is semi-stable. Furthermore, if $D'=D_\st(V)$, then $D' \subset D$ is a sub-object in $\MFnabla$.
Now, let us sketch why the semi-stable version  of \cite[Prop. 4.28]{Bri06} holds:
\begin{enumerate}
\item  The semi-stable version of \cite[Thm. 4.24]{Bri06} obviously holds; namely, $D_\st$ induces an equivalence of Tannakian categories
$$\Rep^\st_\Qp(G_K) \xrightarrow{\simeq}  \MFa,$$
and $V_\st$ is a quasi-inverse.

\item The ``semi-stable version" of \cite[Prop. 4.26]{Bri06} holds: in fact, an object $E \in \MFnabla$ of dimension 1  automatically satisfies $N=0$! Hence we automatically have $V_\st(E)=V_\cris(E)$.

\item The semi-stable version of \cite[Prop. 4.27]{Bri06} holds: namely, if $E \in \MFnabla$ is admissible, then it is weakly admissible.   The proof relies heavily on  \cite[Prop. 4.26]{Bri06}, which is still applicable as we just mentioned.
\end{enumerate}
 
\textbf{Step 2:} Let $D_\bbko :=\bbk_0 \otimes_{K_0}D$, then we naturally get an object in $\MFkpf$,
where $\MFkpf$ is the category defined  ``using $\bbk$ instead of $K$" in Def. \ref{deffilnmod}; note that $\nabla$-operators disappear  because $\wh{\Omega}_{\bbk_0}=0$. 
Note that \emph{a priori}, it is not clear if $D_\bbk$ is weakly admissible in $\MFkpf$ (although we will see later it is). We claim that there is an isomorphism of vector spaces:
\begin{equation}\label{eqbstkpfo}
V \xrightarrow{\simeq} (\bbst \otimes_{\bbko} D_\bbko)^{\varphi=1, N=0} \cap \Fil^0 (\bbdr \otimes_\bbk D_\bbk) = \mathbb{V}_{\st}(D_\bbko),
\end{equation}
where $\mathbb{V}_{\st}$ is the obvious functor $\MFkpf \to \Rep_\Qp(G_\bbk)$.
This is the semi-stable version of one of the displayed equations in \cite[Thm. 4.34]{Bri06}, and we sketch the proof here:
\begin{enumerate}
\item By \cite[Cor. 4.31]{Bri06}, we have an isomorphism
\begin{equation} \label{eqder}
\Fil^0(\bdr\otimes_K D_K)^{\nabla=0} \simeq \Fil^0(\bbdr \otimes_{K} D_K)=\Fil^0 (\bbdr \otimes_\bbk D_\bbk).
\end{equation}  
Note that \cite[Cor. 4.31]{Bri06} only concerns the de Rham period rings and $\nabla$-structure on modules; it has nothing to do with $\varphi$- or $N$-structures.
\item The semi-stable version of \cite[Prop. 4.32]{Bri06} and hence \cite[Cor. 4.33]{Bri06} still holds. To prove the semi-stable version of \cite[Prop. 4.32]{Bri06}, it suffices to show that for any $\nabla$-modules over $K_0$, we have
\begin{equation}\label{eq2}
\Hom_{K_0, \nabla}(D, \bst) \simeq \Hom_{K_0}(D, \bbst).  
\end{equation}
Note that we have decompositions
$$\bst=\oplus_{i \geq 0} (\bcris \cdot \mathfrak{u}^i), \quad \bbst=\oplus_{i \geq 0} (\bbcris \cdot \mathfrak{u}^i),  $$ 
which are both $K_0$-linear; the first decomposition is $\nabla$-stable since $\nabla(\mathfrak u)=0$. Hence it suffices to show that the decomposed (``crystalline") pieces of \eqref{eq2} are isomorphic to each other, namely, 
$$\Hom_{K_0, \nabla}(D, \bcris \cdot \mathfrak{u}^i) \simeq \Hom_{K_0}(D, \bbcris \cdot \mathfrak{u}^i), \quad \forall i;  $$
these follow from \cite[Prop. 4.32]{Bri06}.
Note that \eqref{eq2}   implies the full semi-stable version of \cite[Prop. 4.32]{Bri06}. Thus the  semi-stable version of \cite[Cor. 4.33]{Bri06} holds by some obvious duality argument; namely we have
\begin{equation}\label{eq33}
(\bst \otimes_{K_0} D)^{\varphi=1, N=0, \nabla=0} \simeq (\bbst \otimes_{\bbko} D_\bbko)^{\varphi=1, N=0}.
\end{equation}

\item Finally, \eqref{eqbstkpfo} holds by using \eqref{eqder} and \eqref{eq33}.
\end{enumerate}

\textbf{Step 3:} With Step 1 and Step 2 established, one  can then use exactly the same argument as in the final two paragraphs of \cite[Thm. 4.34]{Bri06} (which uses \cite[Thm. 4.3.(ii)]{CF00}) to conclude that $D$ is admissible.
\end{proof}



\section{Modification of Fontaine modules}\label{sec3}
In this section, we follow the idea of Kisin -- modified by Brinon-Trihan in the imperfect residue field case --  to study modification of Fontaine modules. The main theorem says that we can construct a fully faithful functor from the category of weakly admissible filtered $(\varphi, N)$-modules with connections to a certain category of Breuil-Kisin modules equipped with $(\varphi, N, \nabla)$-operators. The constructed Breuil-Kisin modules will be used in our integral theory in \S \ref{sec4}.
   
\subsection{Fontaine modules and Kisin's $\mathcal A$-modules}   
\begin{notation}   
Let
$$
\mathcal{A} = \{f(u)= \sum_{i=0}^{+\infty} a_i u^i, a_i \in K_0 \mid   f(u) \text{ converges, } \forall u \in \mathfrak{m}_{\mathcal{O}_{\overline{K}}}   \},
$$
where $\mathfrak{m}_{\mathcal{O}_{\overline{K}}}  $ is the maximal ideal of $\mathcal{O}_{\overline{K}}$;
i.e., it consists of series that converge on the entire open unit disk defined over $K_0$.
(The ring is denoted as $\mathcal{O}=\mathcal{O}^{[0, 1)}$ in \cite{Kis06}; we follow the notation of \cite{BT08} and use $\mathcal{A}$ here, because we use  ``$\ok , \oko$" etc to denote rings of integers in this paper.)
Let $\varphi$ be the Frobenius operator extending $\varphi$ on $K_0$ such that $\varphi(u)=u^p$.
Let $\pi \in K$ be a \emph{fixed} uniformizer, and let $E(u)$ be its minimal polynomial over $K_0$. Define an element
$$\lambda =\prod_{n=0}^\infty \varphi^n(\frac{E(u)}{E(0)}) \in \mathcal{A}.$$
Let $N_\nabla$ be the $K_0$-linear differential operator $u\lambda\frac{d}{du}$ on $\mathcal{A}$.
\end{notation}  

\begin{defn} \label{defaom}
 Let $\Mod_{ \mathcal{A}}({\varphi, N_\nabla, \nabla})$ be the category  of the following data: 
\begin{enumerate}
\item $M$ is a  finite free $\mathcal{A}$-module;
\item $\varphi : M \to M$ is a $\varphi_{\mathcal A}$-semi-linear morphism  such that   the cokernel of $1 \otimes \varphi : \varphi ^*M \to M $ is killed by $E(u)^h$ for some $h \in \mathbb{Z}^{\geq 0}$;
\item $N_\nabla: M \to M$ is a  map such that $N_\nabla(fm)=N_\nabla(f)m+fN_\nabla(m)$ for all $f\in \mathcal A$ and $m \in M$, and $N_\nabla\varphi=\frac{pE(u)}{E(0)} \varphi N_\nabla$;
\item $\nabla: M/uM \to M/uM \otimes_{{K_0}} \wh{\Omega}_\ko$ is an integrable quasi-nilpotent connection which commutes with $\varphi$ and $N_\nabla$ (where $N_\nabla=0$ on $\wh{\Omega}_\ko$).
\end{enumerate}
\end{defn}
 
\begin{notation}  \label{n313}
\emph{Fix} a system of elements $\pi_n \in \overline{K}$ such that $\pi_0=\pi$ and $\pi_{n+1}^p=\pi_n, \forall n \geq 0$.
For $n \geq 0$, let $K_{n+1}=K(\pi_{n})$ (hence $K_1=K$). Let
$$\gs =\oko[[u]] \subset \mathcal{A}.$$
Let $\wh{\gs}_n$ be the completion of $K_{n+1}\otimes_{W(k)} \gs$ at the maximal ideal $(u-\pi_n)$; $\wh{\gs}_n$  is equipped with its $(u-\pi_n)$-adic filtration, which extends to a filtration on the quotient field  $\wh{\gs}_n[1/(u-\pi_n)]$.
There is a natural $K_0$-linear map 
$$\mathcal A \to \wh{\gs}_n$$ 
simply by sending $u$ to $u$. 
Let $\ell_u$ be a formal variable, which behaves like $\log u$.
 We can extend the map  $\mathcal A \to \wh{\gs}_n$ to  $\mathcal{A}[\ell_u] \to \wh{\gs}_n$ which sends $\ell_u$ to
$$ \sum_{i=1}^\infty (-1)^{i-1}i^{-1}(\frac{u-\pi_n}{\pi_n})^i \in   \wh{\gs}_n.$$
We can  naturally extend $\varphi$ to  $\mathcal{A}[\ell_u]$ by setting $\varphi(\ell_u)=p\ell_u$, and extend $N_\nabla$ to  $\mathcal{A}[\ell_u]$ by setting $N_\nabla(\ell_u)=- \lambda$.
 Finally, let $N$ be the $\mathcal{A}$-derivation on $\mathcal{A}[\ell_u]$ such that $N(\ell_u)=1$.
 \end{notation}
 

\begin{notation}
The above construction works for any CDVF $K$, hence in particular for $\bbk$. Hence for example, we can define
$$
\mathcal{A}_\bbk = \{f(u)= \sum_{i=0}^{+\infty} a_i u^i, a_i \in \bbk_0 \mid   f(u) \text{ converges }, \forall u \in \mathfrak{m}_{\O_{\overline{\bbk}}}   \}.
$$
Furthermore, since $\pi \in K$ is also a uniformizer of $\bbk$, hence we can keep using the elements $\pi_n$ (regarded as elements in $\overline{\bbk}$) and the polynomial $E(u)$ (regarded as the minimal polynomial of $\pi$ over $\bbk_0$). Thus when we work with $\bbk$, we get exactly the same $\lambda$, and the same $\varphi, N_\nabla$ operators.
We then can define $\Mod_{\mathcal{A}_\bbk}({\varphi, N_\nabla})$ (without $\nabla$-operators as $\wh{\Omega}_\bbk=0$). Then we can define  $\gs_\bbk,  \bbk_n,  \wh{\gs_\bbk}_n$ etc. together with the various operators. The formal variable $\ell_u$ behaves in the same way in this case.
\end{notation}

\begin{construction}
Let $$\MFkpfgeq \subset \MFkpf$$ be the sub-category  consisting of objects with $\Fil^0 \bbd_\bbk=\bbd_\bbk$; here $\MFkpf$ is the category of usual Fontaine modules for $\bbk$ (which has perfect residue field).
For $\mathbb D \in  \MFkpfgeq$, write $\iota_n$ for the following composite map:
\begin{equation}
\mathcal{A}_\bbk[\ell_u] \otimes_{\bbK_0} \bbd \xrightarrow{\varphi_{\bbk_0}^{-n}\otimes \varphi^{-n}} \mathcal{A}_\bbk[\ell_u]\otimes_{\bbK_0} \bbd \rightarrow 
\wh{\gs}_{\bbk,n}\otimes_{\bbK_0} \bbd 
= 
\wh{\gs}_{\bbk,n}\otimes_{\bbK} \bbd_K
\end{equation}
Here
\begin{itemize}
\item $\varphi_{\bbk_0} : \mathcal{A}_\bbk[\ell_u] \to \mathcal{A}_\bbk[\ell_u] $ is the map which acts on $\bbk_0$ by $\varphi$ and fixes $u$ and $\ell_u$;
\item note that the map  $\varphi^{-n}: \mathbb D \to \mathbb D$ is well-defined because $\bbk$ has \emph{perfect} residue field (compare with Rem. \ref{remdimp});
\item  the second map is induced from the map  $\mathcal{A}_\bbk[\ell_u] \to \wh{\gs}_{\bbk, n}$. 
\end{itemize} 
The composite map extends to
\begin{equation}
\iota_n: \mathcal{A}_\bbk[\ell_u, 1/\lambda] \otimes_{\bbK_0} \mathbb D \rightarrow \wh{\gs}_{\bbk, n}[1/(u-\pi_n)]\otimes_{\bbK} \mathbb D_\bbK
\end{equation}
Now, set
\begin{equation}\label{eqdefmd}
\mathbb M(\mathbb D): = \{ x\in (\mathcal{A}_\bbk[\ell_u, 1/\lambda] \otimes_{\bbK_0} \mathbb D )^{N=0}: \iota_n(x)\in \Fil^0  \left( \wh{\gs}_{\bbk,n}[1/(u-\pi_n)]\otimes_{\bbK} \mathbb D_\bbK\right), \forall n\geq 1  \},
\end{equation}
where the $\Fil^0$ in \eqref{eqdefmd} comes from tensor product of two filtrations.
\end{construction}

\begin{theorem} 
\cite[Thm. 1.2.15]{Kis06}
$\mathbb M(\mathbb D)$ is finite free over $\mathcal{A}_\bbk$, is stable under   $\varphi\otimes \varphi$-action and $N_\nabla\otimes 1$-action induced from $\mathcal{A}_\bbk[\ell_u, 1/\lambda] \otimes_{\bbK_0} \mathbb D $. Indeed, $\bbd \mapsto \mathbb M(\mathbb D)$ induces an equivalence of categories
$$ \MFkpfgeq \xrightarrow{\simeq}  \Mod_{ \mathcal{A}_\bbk}({\varphi, N_\nabla}).$$
\end{theorem}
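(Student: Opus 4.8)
The plan is to follow Kisin's original argument in \cite{Kis06}, which applies verbatim here because $\bbk$ has \emph{perfect} residue field, so the variable $\ell_u$, the operators $\varphi$ and $N_\nabla$, and the filtration data are exactly those of \emph{loc. cit.} (there is no $\nabla$-structure to carry along, and weak admissibility plays no role in this particular equivalence: the claim is for \emph{all} effective filtered $(\varphi,N)$-modules on one side and \emph{all} finite free $\mathcal{A}_\bbk$-modules with $E(u)^h$-bounded Frobenius cokernel on the other). There are three things to establish: (i) $\mathbb M(\mathbb D)$ is finite free over $\mathcal{A}_\bbk$; (ii) it is stable under $\varphi\otimes\varphi$ and $N_\nabla\otimes 1$ and satisfies the defining relations of Def.~\ref{defaom}; and (iii) the functor is an equivalence.

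For the finite freeness, I would use that $\mathcal{A}_\bbk$, the ring of functions on the open unit disk over $\bbk_0$, is a B\'ezout domain (Lazard), so a finitely generated torsion-free module over it is automatically free; it therefore suffices to show that $\mathbb M(\mathbb D)$ is finitely generated and has rank $\dim_{\bbk_0}\mathbb D$. Torsion-freeness is immediate since $\mathbb M(\mathbb D)$ is a submodule of $\mathcal{A}_\bbk[\ell_u,1/\lambda]\otimes_{\bbk_0}\mathbb D$. After inverting $\lambda$ the filtration conditions defining $\mathbb M(\mathbb D)$ become vacuous, and the twist $d\mapsto \exp(-\ell_u N_{\mathbb D})(d)=\sum_{i\ge 0}\tfrac{(-\ell_u)^i}{i!}\otimes N_{\mathbb D}^i(d)$ (a finite sum, as $N_{\mathbb D}$ is nilpotent) trivializes the $N=0$ condition, giving an isomorphism $(\mathcal{A}_\bbk[\ell_u,1/\lambda]\otimes\mathbb D)^{N=0}\simeq \mathcal{A}_\bbk[1/\lambda]\otimes\mathbb D$ of the expected rank. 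The real content, and the step I expect to be the main obstacle, is the local analysis at the zeros of $\lambda$, i.e.\ along the Frobenius orbit $\{\pi_n\}$: there the $\Fil^\bullet$-condition via $\iota_n$ imposes a finite lattice modification governed by the filtration on $\mathbb D_\bbK$, and one must show the resulting modified sheaf is coherent and locally free of constant rank. I would reduce this to a single fundamental domain for $\varphi$ using $\varphi$-equivariance (so that it suffices to control the modification at $u=\pi$), and then invoke the explicit local-freeness estimate of \cite{Kis06}.

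For (ii), the operators $\varphi$ and $N_\nabla$ are already defined on the ambient module $\mathcal{A}_\bbk[\ell_u,1/\lambda]\otimes_{\bbk_0}\mathbb D$, so the task is purely to check preservation of the defining conditions. Preservation of $N=0$ is formal from the commutation of $N$ with $\varphi$ and $N_\nabla$ on $\mathcal{A}_\bbk[\ell_u]$; preservation of the $\Fil^0$-condition under $N_\nabla$ is where Griffiths transversality enters, reflected here in the fact that $N_\nabla=u\lambda\,\tfrac{d}{du}$ lowers the $(u-\pi_n)$-adic filtration on $\wh{\gs}_{\bbk,n}$ by one while the filtration on $\mathbb D_\bbK$ is shifted compatibly. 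The relation $N_\nabla\varphi=\tfrac{pE(u)}{E(0)}\varphi N_\nabla$ is a direct computation from $\varphi(u)=u^p$, $\varphi(\ell_u)=p\ell_u$, $N_\nabla(\ell_u)=-\lambda$ and the functional equation $\lambda=\tfrac{E(u)}{E(0)}\varphi(\lambda)$. Finally, the $E(u)^h$-bound on the cokernel of $1\otimes\varphi$ comes from the boundedness of the filtration jumps on $\mathbb D_\bbK$: the hypothesis $\Fil^0\mathbb D_\bbK=\mathbb D_\bbK$ forces effectivity, and the largest jump determines $h$.

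For the equivalence I would construct an explicit quasi-inverse $M\mapsto \mathbb D$ by setting $\mathbb D:=(M/uM)[1/p]$, onto which $\varphi$ descends (giving the $\varphi$-module structure over $\bbk_0$); the monodromy $N$ is recovered from the $\ell_u$-coefficient of elements of $M\subset\mathcal{A}_\bbk[\ell_u,1/\lambda]\otimes\mathbb D$, and the filtration on $\mathbb D_\bbK$ is read off from the map $\iota_0$ via the $(u-\pi)$-adic filtration on $\wh{\gs}_{\bbk,0}$. Essential surjectivity then amounts to proving $\mathbb M(\mathbb D)\simeq M$, where the $E(u)^h$-condition on $M$ guarantees that the reconstructed filtration is effective and bounded so that $\mathbb D\in\MFkpfgeq$; full faithfulness follows from the natural isomorphism $\mathbb D\simeq \mathbb M(\mathbb D)/u\,\mathbb M(\mathbb D)[1/p]$ compatible with all structures, which shows the two functors are mutually quasi-inverse. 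The verification that these reconstructions are compatible with $\Hom$'s is routine and I would not belabor it.
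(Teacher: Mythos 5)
The paper gives no argument of its own here: the theorem is quoted verbatim from \cite[Thm. 1.2.15]{Kis06}, which applies without change because $\bbk$ has perfect residue field --- exactly the route you take, and your sketch of Kisin's internals (B\'ezout property of $\mathcal{A}_\bbk$, untwisting the $N=0$ condition by $\exp(-\ell_u N_{\mathbb D})$, $\varphi$-equivariant reduction of the lattice modification to the single point $u=\pi$, the functional equation $\lambda=\tfrac{E(u)}{E(0)}\varphi(\lambda)$, and recovery of $(\mathbb D,\varphi,N,\Fil^\bullet)$ from $M/uM$ and $\iota_0$, where the inclusion $M\subset \mathcal{A}_\bbk[\ell_u,1/\lambda]\otimes\mathbb D$ you use is itself the content of Kisin's Lem.\ 1.2.6) is faithful to it. One local correction: over $\bbk$ there is no Griffiths transversality in play (the $\nabla$-data disappear in $\MFkpf$ since $\wh{\Omega}_{\bbk_0}=0$), and $N_\nabla=u\lambda\tfrac{d}{du}$ \emph{preserves} --- it does not lower --- the $(u-\pi_n)$-adic filtration on $\wh{\gs}_{\bbk,n}$, precisely because the simple zero of $\lambda$ at each $\pi_n$ cancels the drop caused by $\tfrac{d}{du}$; as you state it (lowering by one, compensated by a shift of the filtration on $\mathbb D_\bbK$) the stability of $\Fil^0$ under $N_\nabla\otimes 1$ would not follow, since the filtration on $\mathbb D_\bbK$ is fixed in the definition of $\mathbb M(\mathbb D)$.
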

 
 \begin{construction}
 The embedding $K_0 \hookrightarrow \bbk_0$ extends to $\mathcal{A} \hookrightarrow \mathcal{A}_\bbk$ where $u \mapsto u$; this embedding is equivariant with the operators $\varphi,   N_\nabla$ defined above. The embedding also extends to $\mathcal{A}[\ell_u] \hookrightarrow \mathcal{A}_\bbk[\ell_u]$ as well as to $\wh{\gs}_n \hookrightarrow \wh{\gs_\bbk}_n$, which are compatible with all the various structures discussed above. 
 Hence if $D \in \MFnablageq$, then we can regard $\mathcal{A}[\ell_u, 1/\lambda]\otimes_\ko D$ as a submodule of $\mathcal{A}_\bbk[\ell_u, 1/\lambda] \otimes_{\bbK_0} \mathbb D  $ compatible with various structures.
Define 
$$M(D):= \left(\mathcal{A}[\ell_u, 1/\lambda] \otimes_{K_0} D\right) \cap \mathbb M(D_{\bbk}).$$
 \end{construction}
 
 \begin{thm}\label{thmmod}
$M(D)$ is finite free over $\mathcal{A}$, is stable under  $\varphi$-action and $N_\nabla$-action induced from $\mathbb M(D_{\bbk})$, and there is a $(\varphi, N)$-equivariant isomorphism
 $$M(D)/uM(D) \simeq D,$$ 
 where $N$ acts on $M(D)/uM(D)$ by $N_\nabla \pmod u$. Equipping $\nabla$ on $M(D)/uM(D)$ from that on $D$, this makes $M(D)$ an object in $\Mod_{ \mathcal{A}}({\varphi, N_\nabla, \nabla})$. Indeed, this construction  induces an equivalence of categories
$$ \MFnablageq \xrightarrow{\simeq} \ModA.$$ 
\end{thm}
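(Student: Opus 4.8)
The plan is to deduce the theorem from Kisin's equivalence $\MFkpfgeq \xrightarrow{\simeq} \Mod_{\mathcal{A}_\bbk}(\varphi, N_\nabla)$ over the field $\bbk$ (which has perfect residue field), by descending along the $\varphi$- and $N_\nabla$-equivariant inclusion $\mathcal{A}[\ell_u,1/\lambda]\otimes_{K_0}D \hookrightarrow \mathcal{A}_\bbk[\ell_u,1/\lambda]\otimes_{K_0}D$. The guiding principle is that an integrable quasi-nilpotent connection $\nabla$ in the $t_i$-directions is exactly the extra datum needed to descend from the perfect field $\bbk_0$ (where $\wh{\Omega}_{\bbk_0}=0$) to $K_0$; thus passing from $\mathbb M(D_\bbk)$ to its $K_0$-rational part $M(D)$ should mirror passing from $D_\bbk$ to $D$.

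First I would establish the descent isomorphism
\begin{equation*}
\mathcal{A}_\bbk \otimes_{\mathcal{A}} M(D) \xrightarrow{\ \simeq\ } \mathbb M(D_\bbk),
\end{equation*}
that is, that the intersection defining $M(D)$ loses no rank and generates $\mathbb M(D_\bbk)$ after extension of scalars. Granting this, finite freeness of $M(D)$ over $\mathcal{A}$ follows from that of $\mathbb M(D_\bbk)$ over $\mathcal{A}_\bbk$ by faithfully flat descent along $\mathcal{A}\hookrightarrow\mathcal{A}_\bbk$ (whose faithful flatness reflects that of $K_0\hookrightarrow\bbk_0$), together with the fact that finitely generated projective $\mathcal{A}$-modules are free ($\mathcal{A}$ being a B\'ezout domain). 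Stability under $\varphi$ and $N_\nabla$ is then automatic: these operators preserve both $\mathcal{A}[\ell_u,1/\lambda]\otimes_{K_0}D$ and $\mathbb M(D_\bbk)$, hence their intersection $M(D)$; and the $E(u)^h$-cokernel condition on $1\otimes\varphi$ descends from the corresponding condition on $\mathbb M(D_\bbk)$ via the displayed isomorphism.

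Next I would reduce the descent isomorphism modulo $u$. Since $\lambda(0)=1$ (so inverting $\lambda$ is harmless at $u=0$) and Kisin's theorem gives $\mathbb M(D_\bbk)/u\mathbb M(D_\bbk)\simeq D_\bbk$ compatibly with $\varphi$ and $N=N_\nabla\bmod u$, descending along $K_0\hookrightarrow\bbk_0$ yields a $(\varphi,N)$-equivariant isomorphism $M(D)/uM(D)\simeq D$. Transporting $\nabla$ from $D$ to $M(D)/uM(D)$ then makes $M(D)$ an object of $\ModA$, the integrability, quasi-nilpotence, and commutation of $\nabla$ with $\varphi$ and $N_\nabla$ being inherited verbatim from the corresponding properties of $D\in\MFnablageq$.

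Finally, for the equivalence I would check full faithfulness and essential surjectivity. Full faithfulness follows from that of $\mathbb M$: a morphism $D\to D'$ is a morphism $D_\bbk\to D'_\bbk$ commuting with the connections, which under Kisin's equivalence corresponds to a morphism $\mathbb M(D_\bbk)\to\mathbb M(D'_\bbk)$ carrying $M(D)$ into $M(D')$. For essential surjectivity, given $M\in\ModA$ I would set $\mathbb M:=\mathcal{A}_\bbk\otimes_{\mathcal{A}}M\simeq\mathbb M(\mathbb D)$ for a unique $\mathbb D\in\MFkpfgeq$ with $\mathbb D=(M/uM)_\bbk$, and then produce $D\in\MFnablageq$ by equipping $M/uM$ with its $\varphi$, its $N=N_\nabla\bmod u$ and its $\nabla$, and by descending the filtration on $\mathbb D_\bbk=\bbk\otimes_K(M/uM)_K$ to a filtration on $(M/uM)_K$ using $\nabla$ --- precisely the connection-descent of \cite[Cor.\ 4.31]{Bri06} already invoked in the proof of Thm.\ \ref{thmCF}; a final application of the descent isomorphism gives $M(D)\simeq M$. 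The main obstacle, as throughout, is the saturation/descent isomorphism displayed above: one must reconcile the convergence conditions distinguishing $\mathcal{A}$ from $\mathcal{A}_\bbk$ with the $\nabla$-descent so that intersecting with the smaller ring preserves rank, and dually descend the filtration through the connection while respecting Griffith transversality.
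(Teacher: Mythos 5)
Your architecture is the paper's architecture: the paper proves this theorem by citing \cite[Prop.\ 4.11]{BT08}, which is exactly the adaptation of \cite[Thm.\ 1.2.15]{Kis06} you describe (apply Kisin over $\bbk$, then descend via the intersection defining $M(D)$), with the $N$- and $N_\nabla$-operators carried along. But there is a genuine gap: the entire non-formal content of the theorem is the saturation/descent isomorphism $\mathcal{A}_\bbk\otimes_{\mathcal{A}}M(D)\simeq \mathbb{M}(D_\bbk)$, which you state, use for everything (freeness by faithfully flat descent, the mod-$u$ identification, full faithfulness, essential surjectivity), and then concede in your final sentence is ``the main obstacle'' --- without ever proving it. This is precisely where the work in \cite{BT08} lies: one must realize $M(D)$ as the global sections of a modification of $\mathcal{A}\otimes_{K_0}D$ at the discrete set of zeros of the $\varphi^n(E(u))$, check that Kisin's local conditions at the points $\pi_n$ --- which involve $\varphi^{-n}$ on $\mathbb{D}$, an operation that does \emph{not} exist over $K_0$ (cf.\ Rem.\ \ref{remdimp}) --- are nonetheless cut out by $K$-rational data once the $\varphi$-structure is used, and then invoke freeness of vector bundles on the open disk. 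Nothing in your write-up substitutes for this analysis; and as a smaller point, faithful flatness of $\mathcal{A}\hookrightarrow\mathcal{A}_\bbk$ also requires an argument, since $\mathcal{A}_\bbk$ is not $\mathcal{A}\otimes_{K_0}\bbk_0$ (flatness is cheap via torsion-freeness over the B\'ezout domain $\mathcal{A}$, but it does not simply ``reflect'' $K_0\hookrightarrow\bbk_0$).

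Your mechanism for essential surjectivity is also miscalibrated. \cite[Cor.\ 4.31]{Bri06} cannot ``descend the filtration on $\mathbb{D}_\bbk$ to $(M/uM)_K$ using $\nabla$'': there is no connection on $\mathbb{D}$ to take horizontal sections of, because $\wh{\Omega}_{\bbk_0}=0$; that corollary presupposes a filtration already defined over $K$ with a connection and compares filtered pieces of period rings. The correct route (Kisin's quasi-inverse, as followed in \cite{BT08}) needs no descent at all: one defines $\Fil^i D_K$ $K$-rationally from the $\varphi$-structure, as the image of $(1\otimes\varphi)^{-1}\left(E(u)^i M\right)\subset \varphi^\ast M$ under $\varphi^\ast M\to \varphi^\ast M/E(u)\varphi^\ast M\simeq D_K$, and then uses $\nabla$ only to verify Griffith transversality and to furnish the connection datum required of an object of $\MFnablageq$. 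This also corrects your guiding principle: the connection is \emph{not} what makes the module descend --- note the theorem asserts freeness and $\varphi$-, $N_\nabla$-stability of $M(D)$ before $\nabla$ is ever mentioned, and the construction of $M(D)$ makes no use of $\nabla$; the rank-fullness of the intersection is a statement about convergent power series and the $\varphi$-module structure, with $\nabla$ entering only at the level of the categories being matched.
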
 
\begin{proof}
This is  analogue of  \cite[Thm. 1.2.15]{Kis06}, and the $N=0$ case is proved \cite[Prop. 4.11]{BT08}. The general case here follows the same argument as in \emph{loc. cit.} (by keeping track of the $N$- and $N_\nabla$-operators).
\end{proof}

Let
\begin{equation}\label{eqbrig}
\begin{split}
\mathcal R: =\{f(u)= \sum_{i=-\infty}^{+\infty} a_i u^i, a_i \in K_0,   f(u) \text{ converges } \\
 \text{ for all } u \in \overline{K} \text{ with } 0<v_p(u)<\rho(f) \text{ for some } \rho(f)>0\}.
\end{split}
\end{equation} 
be the Robba ring (with coefficients in $K_0$). 
 
 \begin{defn}
Let $\ModAO$ be the subcategory of $\ModA$ consisting of objects $M$ such that $\mathcal{R}\otimes_{\mathcal A}M$ is \emph{pure of slope $0$} in the sense of Kedlaya (cf. \cite{Ked04, Ked05}.
\end{defn}

\begin{thm}\label{thmfful}
The functor in Thm. \ref{thmmod} induces a fully faithful functor
$$ \MFnablawageq  \to \ModAO. $$
\end{thm}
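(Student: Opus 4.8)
The functor of Thm.~\ref{thmmod} is an equivalence $\MFnablageq \xrightarrow{\simeq} \ModA$, and both $\MFnablawageq \subset \MFnablageq$ and $\ModAO \subset \ModA$ are \emph{full} subcategories. Hence full faithfulness of the asserted functor is automatic, and the only content to prove is that the equivalence $D\mapsto M(D)$ carries $\MFnablawageq$ into $\ModAO$; that is, for a weakly admissible $D$ with $\Fil^0 D_K = D_K$, the $\varphi$-module $\mathcal{R}\otimes_{\mathcal A} M(D)$ over the Robba ring is pure of slope $0$. The plan is to reduce this purity statement to Kisin's theorem in the perfect residue field case by base change along $\mathcal{A}\hookrightarrow \mathcal{A}_\bbk$, and then to descend purity along $\mathcal{R}\hookrightarrow\mathcal{R}_\bbk$ via Kedlaya's slope formalism.

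First I would pass to the perfect residue field. Set $D_\bbk = \bbk_0\otimes_{K_0} D$. A priori weak admissibility of $D$ over $K$ does not visibly imply weak admissibility of $D_\bbk$ over $\bbk$, since $\bbk_0$-stable subobjects of $D_\bbk$ need not descend to $K_0$-subobjects of $D$ (the Newton and Hodge numbers do transfer, but the subobject inequality does not obviously). To circumvent this I would invoke the already-established Thm.~\ref{thmCF}: $D$ weakly admissible implies $D$ is admissible, so $D\cong D_\st(V)$ for the semi-stable representation $V=V_\st(D)$. Restricting to $G_\bbk\subset G_K$, the representation $V|_{G_\bbk}$ is again semi-stable, and the comparison isomorphisms $i_\varphi$ of Notation~\ref{no222} (exactly as used in Step~2 of the proof of Thm.~\ref{thmCF}) identify $D_\bbk \cong \mathbb{D}_\st(V|_{G_\bbk})$; since $\bbk$ has perfect residue field, the classical Colmez--Fontaine theorem shows $D_\bbk$ is weakly admissible in $\MFkpfgeq$. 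Kisin's slope-theoretic result in the perfect residue field case (\cite[\S 1.3]{Kis06}) then yields that $\mathcal{R}_\bbk\otimes_{\mathcal{A}_\bbk}\mathbb{M}(D_\bbk)$ is pure of slope $0$.

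Next I would compare $M(D)$ with $\mathbb{M}(D_\bbk)$ after base change. The natural inclusion $M(D)\hookrightarrow \mathbb{M}(D_\bbk)$ induces an $\mathcal{A}_\bbk$-linear map
$$ \mathcal{A}_\bbk\otimes_{\mathcal A} M(D) \longrightarrow \mathbb{M}(D_\bbk). $$
Both sides are finite free over $\mathcal{A}_\bbk$ of rank $\dim_{K_0}D$, and reduction modulo $u$ identifies the map with the identity of $D_\bbk = \bbk_0\otimes_{K_0}D$. I would then check that the map is surjective, i.e.\ that $\mathbb{M}(D_\bbk)$ is generated over $\mathcal{A}_\bbk$ by $M(D)$; this is precisely the base-change compatibility built into the construction of $M(D)$ as the $\mathcal{A}$-lattice $(\mathcal{A}[\ell_u,1/\lambda]\otimes_{K_0}D)\cap \mathbb{M}(D_\bbk)$ in Thm.~\ref{thmmod}, following \cite[Prop.~4.11]{BT08}. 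A surjection between finite free modules of equal rank over the commutative domain $\mathcal{A}_\bbk$ is an isomorphism, so the displayed map is an isomorphism, and therefore
$$ \mathcal{R}_\bbk\otimes_{\mathcal R}\bigl(\mathcal{R}\otimes_{\mathcal A} M(D)\bigr) \;\cong\; \mathcal{R}_\bbk\otimes_{\mathcal{A}_\bbk}\mathbb{M}(D_\bbk), $$
which is pure of slope $0$ by the previous paragraph. Finally, since the slopes of a $\varphi$-module over $\mathcal{R}$ are computed after base change to the perfect residue field, purity of slope $0$ descends along $\mathcal{R}\hookrightarrow\mathcal{R}_\bbk$ (\cite{Ked05}); hence $\mathcal{R}\otimes_{\mathcal A} M(D)$ is pure of slope $0$, i.e.\ $M(D)\in\ModAO$, as required.

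The main obstacle is the middle step together with its descent: establishing the base-change isomorphism $\mathcal{A}_\bbk\otimes_{\mathcal A} M(D)\cong \mathbb{M}(D_\bbk)$, and then descending slope-$0$ purity along $\mathcal{R}\hookrightarrow\mathcal{R}_\bbk$. The equal-rank-plus-reduction-mod-$u$ data alone do not force the base-change map to be an isomorphism (an analytic function on the open disk that is nonzero at the origin need not be a unit), so one genuinely needs the surjectivity/generation statement supplied by the construction in Thm.~\ref{thmmod}. Conceptually the reduction to the perfect residue field case via Thm.~\ref{thmCF} is clean, but it makes this proof logically dependent on the full Colmez--Fontaine theorem proved in \S\ref{sec2}.
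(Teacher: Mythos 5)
Your proposal is correct and takes essentially the same route as the paper's proof, which proceeds by citing the argument of \cite[Thm. 1.3.8]{Kis06} as adapted in \cite[Prop. 4.14]{BT08}: full faithfulness is formal from the equivalence in Thm. \ref{thmmod}, and the substantive point --- that weak admissibility forces slope $0$ --- is obtained exactly as you do, by invoking the ``weakly admissible implies admissible'' theorem (Thm. \ref{thmCF}) to reduce, via base change along $\mathcal{A}\hookrightarrow\mathcal{A}_\bbk$, to Kisin's perfect-residue-field theorem together with Kedlaya's slope theory. In particular, your remark that the argument is logically dependent on the full Colmez--Fontaine theorem of \S \ref{sec2} is precisely the dependence the paper's proof itself highlights.
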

\begin{proof}
This is the ``analogue" of \cite[Thm. 1.3.8]{Kis06}, and the $N=0$ case is proved \cite[Prop. 4.14]{BT08}. The general case here follows the same argument as in \emph{loc. cit.}; in particular, let us mention that the proof makes uses of the ``weakly admissible implies admissible" theorem in Thm. \ref{thmCF}.
Note that in the perfect residue field case \cite[Thm. 1.3.8]{Kis06}, the functor is indeed an \emph{equivalence of categories}, which we do not expect in the general case. 
\end{proof}

\begin{defn}
Let $\Mod_{\mathcal A}(\varphi, N, \nabla)$ be the category of the following data:
\begin{enumerate}
\item $M$ is a  finite free $\mathcal{A}$-module;
\item $\varphi : M \to M$ is a $\varphi_{\mathcal A}$-semi-linear morphism  such that   the cokernel of $1 \otimes \varphi : \varphi ^*M \to M $ is killed by $E(u)^h$ for some $h \in \mathbb{Z}^{\geq 0}$;
\item $N: M/uM \to M/uM$ is a $K_0$-linear map such that $N \varphi=p \varphi N$.
\item $\nabla: M/uM \to M/uM \otimes_{{K_0}} \wh{\Omega}_\ko$ is an integrable quasi-nilpotent connection which commutes with $\varphi$ and $N $.
\end{enumerate}
Let $\Mod_{\mathcal A}^{0}(\varphi, N, \nabla)$ be the sub-category consisting of modules which are pure of slope 0 (after tensoring with $\mathcal R$).
\end{defn}

Given $M \in \ModA$, we can construct a module  $\wt M \in \Mod_{\mathcal A}(\varphi, N, \nabla)$ by taking $\wt M =M$ equipped with the same $\varphi$ and $\nabla$,   and take $N$ to be the reduction of $N_\nabla$ modulo $u$.
\begin{prop} \label{3111}
The map $M \mapsto \wt M$ above induces a  fully faithful functor:
$$ \ModA \to  \Mod_{\mathcal A}(\varphi, N, \nabla),$$
which then induces a fully faithful functor 
$$ \ModAO \to  \Mod_{\mathcal A}^{0}(\varphi, N, \nabla).$$
\end{prop}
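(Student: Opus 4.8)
The plan is to verify in turn that the assignment $M \mapsto \wt M$ lands in the target category, that it is faithful (which is immediate), and that it is full; the slope-$0$ statement then follows formally. Throughout I would write $c = \tfrac{pE(u)}{E(0)} \in \mathcal{A}$ and let $v$ denote the $u$-adic valuation on $\mathcal{A}$ (order of vanishing at $u=0$), extended coordinate-wise to any finite free $\mathcal{A}$-module via a chosen basis. The first thing to record are the two reductions modulo $u$ on which everything rests: since $\lambda = \prod_{n\ge 0}\varphi^n(E(u)/E(0))$ and each factor has constant term $1$, we have $\lambda \equiv 1 \pmod u$; and since $E(0) \in K_0^\times$ (as $K_0$ is a field and $E(0)\neq 0$) we have $E(u) \equiv E(0)$, hence $c \equiv p \pmod u$ and $v(c)=0$.

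For well-definedness of $\wt M$, the operator $N_\nabla = u\lambda\tfrac{d}{du}$ satisfies $N_\nabla(u m) = u\lambda m + u N_\nabla(m) \in uM$ and $N_\nabla(am)=aN_\nabla(m)$ for $a\in K_0$, so it preserves $uM$ and descends to a $K_0$-linear endomorphism $N$ of $M/uM$; reducing the identity $N_\nabla\varphi = c\,\varphi N_\nabla$ modulo $u$ (note $\varphi(uM)=u^p\varphi(M)\subseteq uM$) yields $N\varphi = p\varphi N$. As $\varphi$, $\nabla$ and the quasi-nilpotency of $\nabla$ are unchanged and $\nabla$ commutes with $N=N_\nabla \bmod u$ by hypothesis, $\wt M$ is indeed an object of $\Mod_{\mathcal A}(\varphi,N,\nabla)$.

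For full faithfulness, faithfulness is clear since the underlying $\mathcal{A}$-linear map is unchanged. For fullness, let $f\colon \wt M \to \wt{M'}$ be a morphism in $\Mod_{\mathcal A}(\varphi,N,\nabla)$, so $f$ is $\mathcal{A}$-linear and commutes with $\varphi$, with $\nabla$, and with $N$ on the reductions; I must show $f$ commutes with $N_\nabla$, i.e. that $g := f N_\nabla - N_\nabla f$ vanishes. A direct Leibniz computation shows $g$ is $\mathcal{A}$-linear, and combining the compatibility of $f$ with $\varphi$ and of $N_\nabla$ with $\varphi$ on both $M$ and $M'$ gives the twisted relation $g\varphi = c\,\varphi g$; moreover $g \equiv 0 \pmod u$ because the reduction $\bar f$ commutes with $N = N_\nabla \bmod u$, i.e. $g(M)\subseteq uM'$.

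The crux is a $u$-adic rigidity argument showing $g = 0$, which I expect to be the main obstacle. Assume $g(M)\subseteq u^k M'$. For $m\in M$ the relation gives $g(\varphi m) = c\,\varphi(g(m))$; writing $g(m)=u^k n$ and using $\varphi(u^k n)=u^{pk}\varphi(n)$ together with $v(c)=0$, one finds $g(\varphi M)\subseteq u^{pk}M'$, hence $g(\mathcal A\cdot\varphi M)\subseteq u^{pk}M'$ by $\mathcal{A}$-linearity. Since $\operatorname{coker}(1\otimes\varphi)$ is killed by $E(u)^h$, we have $E(u)^h M \subseteq \mathcal A\cdot\varphi M$, so $E(u)^h g(M)\subseteq u^{pk}M'$; as $v(E(u)^h)=0$, the valuation inequality forces $g(M)\subseteq u^{pk}M'$. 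Starting from $k=1$ and iterating gives $g(M)\subseteq \bigcap_n u^{n}M' = 0$, whence $g=0$ and $f$ is a morphism in $\ModA$. Finally, since the functor leaves the underlying $\varphi$-module (and hence $\mathcal{R}\otimes_{\mathcal A}M$) unchanged, it preserves the purity-of-slope-$0$ condition in both directions; the restriction $\ModAO \to \Mod_{\mathcal A}^{0}(\varphi,N,\nabla)$ is therefore a well-defined fully faithful functor between the corresponding full subcategories.
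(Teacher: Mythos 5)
Your proof is correct and takes essentially the same route as the paper: the paper's proof simply invokes \cite[Lem.\ 1.3.10(2)]{Kis06} and observes that the $\nabla$-operators cause no trouble since they are unchanged, and your $u$-adic rigidity argument --- showing the discrepancy $g = fN_\nabla - N_\nabla f$ is $\mathcal A$-linear, satisfies $g\varphi = \frac{pE(u)}{E(0)}\varphi g$, vanishes modulo $u$, and then vanishes modulo $u^{pk}$ inductively using $E(u)^h M \subseteq \mathcal A\cdot\varphi(M)$ and $v(E(u))=0$ --- is precisely the content of that cited lemma, written out in full.
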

\begin{proof}
When $K$ has perfect residue field, this is \cite[Lem. 1.3.10(2)]{Kis06}. The general case here follows exactly the same proof: the additional $\nabla$-operators here cause  no trouble, as they remain unchanged under the functor.
\end{proof}

\subsection{Relation with modules over $\gs$ and $S$}
Recall $\gs =\oko[[u]] \subset \mathcal{A},$ it is stable under $\varphi$.

\begin{defn}\label{321}
 Let $\Mod_{\gs}^{\varphi}$ be the category consisting of $(\gm, \varphi)$ where $\gm$ is a finite free $\gs$-module, and $\varphi: \gm \to \gm$ is a $\varphi_\gs$-semi-linear map such that the $\gs$-linear span of $\varphi(\gm)$ contains $E(u)^h\gm$ for some $h\geq 0$. We say that $\gm$ is of $E(u)$-height $\leq h$.
 \end{defn}
 
 \begin{defn}\label{def39}
Let $\Modgsnabla$ be the category of the following data:
\begin{enumerate}
\item $(\gm, \varphi)$ is an object in $\Mod_{\gs}^{\varphi}$;
\item  a $K_0$-linear map $N: \gm/u\gm[1/p] \to \gm/u\gm[1/p]$ such that $N\varphi=p\varphi N$ over $\gm/u\gm[1/p]$;
\item an integral quasi-nilpotent connection $\nabla: \gm/u\gm[1/p] \to \gm/u\gm[1/p]\otimes_{K_0}\wh\Omega_{K_0}$ which commutes with $\varphi$ and $N$.
\end{enumerate}
 Let $\Modgsnabla\otimes_\Zp \Qp$ be   its isogeny category.
 \end{defn}

\begin{theorem}\label{323}
We have the following commutative diagram of categories. Here $\simeq$ indicates an equivalence of categories, and a hooked arrow indicates a fully faithful functor.
\begin{equation}\label{tik1}
 \begin{tikzcd}
\MFnablageq \arrow[dd, "  \simeq"] &  & \MFnablawageq \arrow[dd, " ", hook] \arrow[ll, " "', hook'] \arrow[rr, " (\ast)", hook] &  & \Modgsnabla\otimes_\Zp \Qp \arrow[dd, "\Theta \simeq "] \\
                                     &  &                                                                                         &  &                                                     \\
\ModA                                &  & \ModAO \arrow[ll, " "', hook'] \arrow[rr, " ", hook]                                &  & {\Mod_{\mathcal A}^{0}(\varphi, N, \nabla)}        
\end{tikzcd}
\end{equation}
\end{theorem}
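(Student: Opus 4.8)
The diagram is assembled almost entirely from functors already in hand, so the plan is first to isolate the genuinely new content — the right vertical equivalence $\Theta$ and the top functor $(\ast)$ — and then to check commutativity of the two squares. I would begin by recording what is already established: the left vertical equivalence $\MFnablageq \simeq \ModA$ is Theorem \ref{thmmod}; the middle vertical fully faithful functor $\MFnablawageq \hookrightarrow \ModAO$ is Theorem \ref{thmfful}; the upper hook $\MFnablawageq \hookrightarrow \MFnablageq$ and the lower hook $\ModAO \hookrightarrow \ModA$ are the evident subcategory inclusions; and the lower-right hook $\ModAO \hookrightarrow \Mod_{\mathcal A}^{0}(\varphi, N, \nabla)$ is Proposition \ref{3111}. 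Only the right column requires new work.

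For $\Theta$ I would take the base-change functor along $\gs = \oko[[u]] \hookrightarrow \mathcal A$, sending $(\gm,\varphi,N,\nabla)$ to $(\mathcal A\otimes_\gs\gm,\varphi,N,\nabla)$. The statement that base change identifies the isogeny category of finite $E(u)$-height $\varphi$-modules over $\gs$ with the $\varphi$-modules over $\mathcal A$ that are \emph{pure of slope $0$} is exactly the Kisin--Kedlaya slope theory reviewed in \S\ref{sec5}, and this already furnishes an equivalence on the underlying $(\gm,\varphi)$-data. The key simplification is that the $N$- and $\nabla$-operators live on the reduction modulo $u$, and base change is compatible with that reduction: since $\mathcal A/u\mathcal A = K_0$ and $\gs/u\gs = \oko$, there is a canonical identification
$$(\mathcal A\otimes_\gs\gm)/u(\mathcal A\otimes_\gs\gm)\;\simeq\;(\gm/u\gm)\otimes_{\oko}K_0\;=\;\gm/u\gm[1/p],$$
which is $\varphi$-equivariant because $\varphi(u)=u^p$. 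Hence the $N$- and $\nabla$-structures, together with all the relations $N\varphi=p\varphi N$, integrability, quasi-nilpotence, and commutation of $\nabla$ with $\varphi$ and $N$, transport across $\Theta$ literally unchanged, so the equivalence on $\varphi$-modules upgrades verbatim to an equivalence of the categories carrying the extra $(N,\nabla)$-data.

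With $\Theta$ available I would \emph{define} $(\ast)$ as the composite
$$\MFnablawageq \hookrightarrow \ModAO \hookrightarrow \Mod_{\mathcal A}^{0}(\varphi, N, \nabla) \xrightarrow{\Theta^{-1}} \Modgsnabla\otimes_\Zp\Qp,$$
where the first arrow is Theorem \ref{thmfful} and the second is Proposition \ref{3111}. This composite is fully faithful, since each factor is either fully faithful or an equivalence, which gives the labeled arrow $(\ast)$. Commutativity of the right square then holds by this very definition. For the left square I would note that the functor of Theorem \ref{thmfful} is, by construction, the restriction of the functor $M(\cdot)$ of Theorem \ref{thmmod} to weakly admissible objects, so the two routes around the left square agree on the nose.

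The main obstacle is the construction of $\Theta$, and more precisely the input from \S\ref{sec5}: that the base change of a finite-height $\gs$-module is pure of slope $0$, and that every pure slope-$0$ $\mathcal A$-module descends, uniquely up to isogeny, to such a $\gs$-module. This is where Kedlaya's slope filtration theorem is needed, and it is the only substantive point; once it is granted, the transport of the $N$- and $\nabla$-operators is purely formal, being an actual identity on $\gm/u\gm[1/p]$, and the remaining commutativity checks are bookkeeping.
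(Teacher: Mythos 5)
Your proposal is correct and takes essentially the same route as the paper: the paper also defines $\Theta$ as base change $\gm \mapsto \gm\otimes_\gs \mathcal{A}$, obtains the equivalence from the analogue of \cite[Lem.~1.3.13]{Kis06} (the $N=0$ case being \cite[Prop.~4.17]{BT08}), whose substantive input is Kedlaya's slope filtration theorem, and then gets $(\ast)$ as the functor induced by composing with $\Theta$; your explicit identification $(\mathcal{A}\otimes_\gs\gm)/u \simeq \gm/u\gm[1/p]$, showing the $(N,\nabla)$-data transport unchanged, is precisely the bookkeeping the paper leaves implicit. One small attribution slip: the slope-theoretic descent is not among the results reviewed in \S\ref{sec5} (which concerns $\varphi$-modules over $\gs$ and $S$) but enters only through the cited references.
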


\begin{proof}
With  Thm. \ref{thmmod}, Thm. \ref{thmfful} and Prop. \ref{3111} established, it suffices to discuss the arrow labeled as $\Theta$; the equivalence $\Theta$ would induce the fully faithful functor labelled as $(\ast)$. 

The functor  $\Theta$ is simply defined via
$$\gm \mapsto \gm \otimes_\gs \mathcal A.$$
  The equivalence $\Theta$ is the analogue of \cite[Lem. 1.3.13]{Kis06}, and the $N=0$ case is proved \cite[Prop. 4.17]{BT08}. The general case here follows the same argument as in \emph{loc. cit.}; in particular, as mentioned in \emph{loc. cit.}, the proof makes use of slope filtration theorem of Kedlaya \cite{Ked04, Ked05}, which has no restriction on residue fields.
\end{proof}

\begin{notation}\label{n324}
Let $S$ be the $p$-adic completion of the PD envelope of $\O_{K_0}[u]$ with respect to the ideal $(E(u))$. Explicitly, 
$$ S =\{ x=\sum_{i \geq 0} a_i \frac{E(u)^i}{i!} \mid  a_i \in \O_{K_0}[u], \lim_{i \to \infty} v_p(a_i)=+\infty  \} \subset K_0[[u]].$$ 
\end{notation}

\begin{prop}\label{p325}
Suppose $D \in \MFnablawageq$, and it maps to $\gm$ (up to isogeny) via the functor labelled as $(\ast)$ in Thm. \ref{323}. Then there is an $\varphi$-equivariant isomorphism
\begin{equation} \label{eqdsms}
D\otimes_\ko S[1/p] \simeq \gm\otimes_{\varphi, \gs} S[1/p].
\end{equation}
\end{prop}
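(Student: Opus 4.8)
The plan is to first unwind what it means for $D$ to ``map to $\gm$ via $(\ast)$.'' By the commutative diagram of Thm.~\ref{323}, the functor $(\ast)$ is the composite of the equivalence $\MFnablageq \xrightarrow{\simeq} \ModA$ of Thm.~\ref{thmmod}, the fully faithful functor of Prop.~\ref{3111}, and the inverse of $\Theta\colon \gm \mapsto \gm\otimes_\gs \mathcal A$. Hence, in the isogeny category, there is a $\varphi$-equivariant identification $\gm\otimes_\gs \mathcal A \simeq M(D)$, and reducing modulo $u$ and invoking the isomorphism $M(D)/uM(D)\simeq D$ of Thm.~\ref{thmmod} yields a $\varphi$-compatible identification
$$ D \;\simeq\; M(D)/uM(D) \;\simeq\; \gm\otimes_\gs(\mathcal A/u\mathcal A) \;=\; (\gm/u\gm)[1/p]. $$
Thus the assertion reduces to producing a $\varphi$-equivariant isomorphism $\gm\otimes_{\varphi,\gs} S[1/p] \simeq (\gm/u\gm)[1/p]\otimes_{K_0} S[1/p]$, that is, the comparison between the Breuil--Kisin module $\gm$ and its associated Breuil $S$-module.

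I would then recognize this as the analogue of Kisin's comparison \cite[Lem.~1.2.6]{Kis06}: the $N=0$, perfect-residue-field case is proved there, and the $N=0$ imperfect-residue-field case is treated in \cite[\S 4]{BT08}. Since the present statement only asserts \emph{$\varphi$-equivariance} --- not compatibility with $N$, $\nabla$, or the filtration --- these extra structures play no role, and one only has to compare the Frobenius structures over $S[1/p]$. Concretely, I would construct the isomorphism by exhibiting a $\varphi$-equivariant splitting of the reduction map $\gm\otimes_{\varphi,\gs} S[1/p] \to (\gm/u\gm)[1/p]\otimes_{K_0} S[1/p]$ via successive approximation: lift a $\varphi$-basis of $\gm/u\gm$ and correct it using $1\otimes\varphi\colon \varphi^\ast\gm \to \gm$, whose cokernel is killed by $E(u)^h$. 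The correction terms lie in the augmentation ideal of $S$, on which the forward Frobenius $\varphi$ raises the $u$-adic (equivalently $E(u)$-adic) order, while the divided powers $E(u)^i/i!$ supply the denominators that force the resulting series to converge $p$-adically in $S[1/p]$. Comparing ranks, the splitting is then an isomorphism. (Equivalently, one may integrate the connection $N_\nabla=u\lambda\frac{d}{du}$ over $S[1/p]$ by a Taylor expansion, which converges for the same divided-power reason.)

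The point I would stress is that this comparison is a genuine base change over $S[1/p]$ and does \emph{not} follow formally from the $\mathcal A$-level identification $\gm\otimes_\gs\mathcal A\simeq M(D)$: the divided powers $E(u)^i/i!$ converge only on a disk of radius strictly less than $1$, so $S[1/p]\not\subset\mathcal A$, and the two extensions of $\gm$ from $\gs$ to $\mathcal A$ and to $S[1/p]$ are incomparable. The main obstacle is therefore the convergence of the corrective series in $S[1/p]$, where the perfect-case estimates of \cite{Kis06} transfer essentially verbatim, because the only residue-field-sensitive features --- the non-bijectivity of $\varphi$ on $K_0$ and the presence of $\nabla$ --- never enter a $\varphi$-only argument built from the \emph{forward} map $1\otimes\varphi$. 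Since this construction is canonical and the data $\gm$, $D$ are already defined over $\gs$, respectively $K_0$, the splitting is defined over $S[1/p]$ itself; base changing along $K_0\hookrightarrow\bbk_0$ to the perfect-residue-field field $\bbk$ then lets one verify that it is an isomorphism by a direct application of \cite[Lem.~1.2.6]{Kis06} to $\gm_\bbk$ and $D_\bbk=\bbk_0\otimes_{K_0}D$.
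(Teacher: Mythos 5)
Your proposal takes essentially the same route as the paper. The paper's proof of Prop.~\ref{p325} is a citation: \cite[Lem.~1.2.6]{Kis06} (cf.\ \cite[\S 3.2]{Liu08}) in the perfect residue field case, and the \emph{argument} of \cite[Prop.~5.7]{BT08} in general, with the remark that the $E(u)$-height $\leq 1$ hypothesis there is irrelevant. Your successive-approximation construction of the $\varphi$-equivariant section over $S[1/p]$ --- run with the forward linearization $1\otimes \varphi$ so that the non-bijectivity of $\varphi$ on $K_0$ never intervenes, with convergence forced by the divided powers --- is precisely that argument. Two small inaccuracies: the reference should be \cite[Prop.~5.7]{BT08} rather than \cite[\S 4]{BT08}, and the reduction of $\gm\otimes_{\varphi,\gs}S$ modulo $I_u$ is $\varphi^\ast(\gm/u\gm)$ rather than $\gm/u\gm$; the latter twist is harmless here by Rem.~\ref{remdimp} (cf.\ Rem.~\ref{r523}(2)), since $1\otimes\varphi\colon \varphi^\ast D \to D$ is an isomorphism on the $K_0$-level.

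One genuine error in your commentary, though it is not load-bearing: the two rings are \emph{not} incomparable. It is true that $S[1/p]\not\subset \mathcal{A}$ (e.g.\ $\sum_i E(u)^i/i!$ diverges near the boundary of the open unit disk), but the containment $\mathcal{A}\subset S[1/p]$ \emph{does} hold: for $f\in\mathcal{A}$, Weierstrass division by the monic polynomial $E(u)$ introduces no denominators, the resulting $E(u)$-adic coefficients $b_i$ satisfy $v_p(b_i)\geq -\delta e i + C_\delta$ for every $\delta>0$ (since the $u$-adic coefficients of $f$ decay with arbitrarily small negative slope), and $v_p(i!)\sim i/(p-1)$, so $f=\sum_i (i!\,b_i)\,E(u)^i/i! \in S[1/p]$. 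This standard containment is exactly how the $\mathcal{A}$-level section is transported to $S[1/p]$ in the perfect-residue case. So the correct reason the proposition is not formal from $\Theta$ is not incomparability of the two base extensions of $\gm$: it is that the identification $\gm\otimes_\gs\mathcal{A}\simeq M(D)$ by itself provides no $\varphi$-equivariant $K_0$-descent datum --- producing the section is the entire content of \cite[Lem.~1.2.6]{Kis06} and \cite[Prop.~5.7]{BT08}. Relatedly, ``comparing ranks'' does not upgrade your splitting to an isomorphism (an injective map of free modules of equal rank need not be surjective); what is needed is the cokernel control: the $E(u)^h$-cokernel of $1\otimes\varphi$ becomes, after the Frobenius twist, a module killed by $\varphi(E(u))^h=p^h\cdot(\text{unit in }S)$, resp.\ by $\varphi(\lambda)^h\in S[1/p]^{\times}$, whence surjectivity over $S[1/p]$. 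Neither point affects the validity of your main construction.
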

\begin{proof}
This follows \cite[Lem. 1.2.6]{Kis06} (cf. also \cite[\S 3.2]{Liu08}) when $K$ has perfect residue field, 
and follows the argument 
in \cite[Prop. 5.7]{BT08} in the general case. Note that in \cite[Prop. 5.7]{BT08}, 
it was assumed that $ \gm\otimes_{\varphi, \gs} S$ is an object in ``$\mathrm{MF}^{\BT}_S(\varphi, \nabla)$" (cf. Def. \ref{d521}) and in particular the $E(u)$-height of $\gm$ is $\leq 1$; but this assumption is irrelevant.
\end{proof}

\section{Integral $p$-adic Hodge theory}\label{sec4}
The main goal in this section is to study \emph{integral} semi-stable representations. In \S \ref{sub42}, we study the the relation between \'etale $\varphi$-modules, Breuil-Kisin modules, and Galois representations; in  \S \ref{new43} we strengthen some of these results in the semi-stable case.
In \S \ref{sub43}, we construct a fully faithful functor from the category of integral  semi-stable representations to the category of {Breuil-Kisin  $ G_K$-modules}. 
Some  of the results in \S \ref{sub42}, as well as in \S \ref{sec5} and \S \ref{secpdiv}  (but \emph{not} in \S \ref{sub43})  were in fact already established in the \emph{relative} case in \cite{Kim15}; hence in \S \ref{sub41}, we first review some notions from \cite{Kim15} which will be mentioned later.

\subsection{Relation with Kim's work}\label{sub41}
We first review some notions from \cite{Kim15}.

\begin{assumption}\label{assr}
Let $R$ be a $p$-adically complete and separated flat $\Zp$-algebra. We list some assumptions on $R$.

\begin{enumerate}
\item[(i)] ($p$-basis, cf. \cite[\S 2.2.1]{Kim15}). Assume that $R \cong R_0[u]/E(u)$, where $R_0$ is a $p$-adic flat $\Zp$-algebra such that $R_0/(p)$ locally admits a finite $p$-basis, and 
\[E(u) = p + \sum_{i=1}^e a_i u^i \] 
for some integer $e>0$, with $a_i\in R_0$ and $a_e\in R_0^\times$ (where $R_0^\times \subset R_0$ are the units).  Let $\varpi\in R$ denote the image of $u\in R_0[u]$.

\item[(i)']  ($p$-basis + Cohen subring, cf. \cite[\S 2.2.1]{Kim15}). In addition to  (i), assume there is a Cohen subring $W\subset R_0$ such that $E(u)\in W[u]$.

\item[(ii)] ($p$-basis+formally finite-type, cf. \cite[\S 2.2.2]{Kim15}). In addition to   (i), assume  that $R$ is $J_R$-adically separated and complete for some finitely generated ideal $J_R$ containing $\varpi$, and $R/J_R$ is finitely generated over some field $k$.

\item[(iii)] ($p$-basis+formally finite-type + refined almost \'etaleness, cf. \cite[\S 2.2.3]{Kim15}). In addition to  (ii), assume $R$ is a domain such that $R[1/p]$ is finite \'etale over $R_0[1/p]$ and we have $\wh\Omega_{R_0} = \bigoplus_{i=1}^d R_0 d T_i$  for some units $T_i\in R_0^\times$. Here, $\wh\Omega_{R_0}$ is the module of $p$-adically continuous K\"ahler differentials. 
\end{enumerate}
\end{assumption}

\begin{rem}
In \cite[\S 2.2.4]{Kim15},  a certain normality assumption is also introduced; it is weaker than Assumption (ii) above. Also, certain local complete intersection assumption is introduced in \cite[\S 2.2.5]{Kim15}; it is irrelevant in the current paper.
\end{rem}

\begin{lemma}\label{lemok}
$\ok$ satisfies \emph{all} the assumptions (including (i)') in \S \ref{assr}.
\end{lemma}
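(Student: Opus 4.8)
The plan is to verify each of the assumptions (i), (i)$'$, (ii), and (iii) from \S\ref{assr} in turn for $R = \ok$, using the notations already established in \S\ref{sec21}. The key observation throughout is that $\ok$ is a complete discrete valuation ring of mixed characteristic $(0,p)$ whose residue field $k_K$ admits a finite $p$-basis $\bar t_1, \dots, \bar t_d$, and that we have already fixed a closed subfield $K_0 \subset K$ with ring of integers $\oko$, a uniformizer $\pi \in K$, and its minimal polynomial $E(u)$ over $K_0$.

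First I would check (i) by taking $R_0 = \oko$ and $u \mapsto \pi = \varpi$, so that $R = \ok \cong \oko[u]/E(u)$ via the standard presentation of the total ramification. Here $\oko$ is a Cohen ring (a complete DVR with uniformizer $p$ and residue field $k_K$), hence $\Zp$-flat and $p$-adically complete, and $\oko/(p) = k_K$ admits the finite $p$-basis $\bar t_1, \dots, \bar t_d$ fixed in \S\ref{sec21} — so it locally (indeed globally) admits a finite $p$-basis. Writing $E(u) = p + \sum_{i=1}^{e} a_i u^i$ as an Eisenstein polynomial with $a_i \in \oko$, one has $p \mid a_i$ for $1 \le i < e$ and $a_e \in \oko^\times$, confirming the required form with $e = e_K = [K:K_0]$. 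Assumption (i)$'$ is then immediate: since $\oko$ is itself a complete DVR with perfect-residue-field-free finite $p$-basis, we may take the Cohen subring $W = \oko$ (or any chosen Cohen subring of $\oko$), and $E(u) \in \oko[u] = W[u]$ by construction.

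Next I would verify (ii): since $\ok$ is a complete discrete valuation ring, it is $\fkm$-adically separated and complete where $\fkm = (\pi)$ is the maximal ideal; taking $J_R = (\pi) = (\varpi)$ (a principal, hence finitely generated, ideal containing $\varpi$), we get $R/J_R = k_K$, which is finitely generated — indeed finite — over the field $k = k_K^p$ (or over any perfect subfield), as witnessed by the finite $p$-basis. Finally, for (iii), $\ok$ is certainly a domain; the condition $\ok[1/p] = K$ being finite étale over $\oko[1/p] = K_0$ holds because $K/K_0$ is a finite separable extension (both are characteristic-$0$ fields, and $E(u)$ is separable). The differential condition $\wh\Omega_{\oko} = \bigoplus_{i=1}^d \oko\, dT_i$ for units $T_i$ follows from the fact, already recorded in \S\ref{sec21}, that $\wh\Omega_{\oko}$ is free over $\oko$ with basis $\{d\log t_i\}_{1 \le i \le d} = \{dt_i / t_i\}$; since the lifts $t_i$ of the $p$-basis can be (or are) chosen as units in $\oko$, setting $T_i = t_i$ gives the desired decomposition.

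The only genuine subtlety — and the step I would treat most carefully — is the claim in (iii) that the $t_i$ may be taken to be units, so that $dT_i = d\log T_i \cdot T_i$ truly exhibits $\wh\Omega_{\oko}$ as $\bigoplus \oko\, dT_i$ with $T_i \in \oko^\times$. If the fixed lifts $t_i$ from \S\ref{sec21} are not already units, one replaces them by $1 + t_i$ or any unit lift of a $p$-basis element, which is permissible since the residue field condition only constrains the $t_i$ modulo $p$, and a unit differs from its lift by an element of $1 + \fkm$; the resulting $\{d\log T_i\}$ remains an $\oko$-basis of $\wh\Omega_{\oko}$. Everything else reduces to standard facts about complete DVRs and Eisenstein polynomials, so the proof is essentially a bookkeeping exercise matching the intrinsic structure of $\ok$ established in \S\ref{sec21} against Kim's list of axioms.
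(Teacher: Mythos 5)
Your verification follows the same route as the paper's own (one-line) proof --- take $R_0=\oko$, $\varpi=\pi$, $J_R=(\pi)$ --- and almost all of it is correct, but there is one inaccuracy in your treatment of assumption (i). Kim's assumption requires the polynomial to have constant term \emph{exactly} $p$, whereas the monic minimal polynomial $E(u)$ of $\pi$ over $K_0$ only satisfies $v_p(E(0))=1$, i.e.\ $E(0)=p\cdot(\text{unit})$, since $p$ is a uniformizer of $\oko$; it need not equal $p$. So ``writing $E(u)=p+\sum_{i=1}^e a_iu^i$'' is not possible in general for the fixed $E(u)$ of the paper. The paper sidesteps this by using the rescaled polynomial $\frac{p}{E(0)}E(u)$, which has constant term $p$ and leading coefficient $\frac{p}{E(0)}\in\oko^\times$ --- this is precisely where the flexibility $a_e\in R_0^\times$ (rather than monicity) in Kim's formulation is used --- and which generates the same ideal, so $\ok\cong\oko[u]/\bigl(\frac{p}{E(0)}E(u)\bigr)$ still holds. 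This is a trivially repairable slip rather than a missing idea, but as written your claim about the form of $E(u)$ is false.

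Conversely, the one step you single out as a genuine subtlety is not one: since $\oko$ is local with residue field $k_K$ and each $\bar{t}_i$ belongs to a $p$-basis, necessarily $\bar{t}_i\neq 0$ (as $0\in k_K^p$), so every lift $t_i$ is \emph{automatically} a unit of $\oko$ and no replacement is needed. (Your proposed substitute $1+t_i$ in fact lifts $1+\bar{t}_i$ rather than $\bar{t}_i$, so as written it would change the $p$-basis; and it can fail to be a unit if $\bar{t}_i=-1$.) Two further small remarks: in (ii) you may simply take $k=k_K$ itself, making $R/J_R=k_K$ trivially finitely generated, though your choice $k=k_K^p$ also works since $[k_K:k_K^p]=p^d<\infty$; and in (i)$'$ taking $W=\oko$ is correct because $\oko$ is by construction a complete discrete valuation ring with uniformizer $p$ and residue field $k_K$, i.e.\ a Cohen ring, so no smaller Cohen subring is needed. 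With the rescaling of $E(u)$ inserted, your argument verifies exactly the data the paper specifies.
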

\begin{proof}
For $R=\ok$, we can use the subring $R_0=\oko$, the polynomial $\frac{p}{E(0)}E(u)$ (then $\varpi=\pi$), and $J_R=(\pi)$.
\end{proof}


 \subsection{\'Etale $\varphi$-modules} \label{sub42}
 \begin{notation}\label{nn421}
Recall $\gs=\oko[[u]]$. Let $\OE$ be the $p$-adic completion of $\gs[1/u]$. 
We have defined $\pi_n$ in \S \ref{n313}. Regard $\underline{\pi}=\{\pi_n\}_{n \geq 0}$  as an element in $\O_{C}^\flat$, and let $[\underline{\pi}] \in W(\O_{C}^\flat)$ be its Teichm\"uller lift.
The embedding $i_\varphi: \oko \into W(\mathbf{k})$ induces an $\oko$-linear and $\varphi$-equivariant embedding 
$$\gs  \into W(\O_{C}^\flat)$$ 
by sending $u$ to $[\underline{\pi}]$; it extends to an embedding
$$ \OE \into W({C}^\flat). $$
Let $\O_{\E^\ur} \subset W(C^\flat)$ be the maximal unramified extension of $\O_{\E}$, and let $\O_{\wh{\E}^\ur} $ be its $p$-adic completion. Let $\wh{\gs}^\ur =\O_{\wh{\E}^\ur} \cap W(\O_\mathbb{C}^\flat)$. 
\end{notation}

\begin{defn}\label{421}
An \'etale $\varphi$-module  is a finite free $\OE$-modules $M $  equipped with a $\varphi_{\OE}$-semi-linear endomorphism $\varphi _M : M\to M$ such that $1 \otimes \varphi : \varphi ^*M \to M $ is an isomorphism. Let $\Mod_{\OE}^\varphi$ denote the category of these objects.
\end{defn}

\begin{notation}
Starting from $\bbk$, we can also define $\gs_\bbk=\O_{\bbk_0}[[u]]$. Let $\O_{\E_\bbk}$ be the $p$-adic completion of $\gs_\bbk[1/u]$. 
Similarly as in \S \ref{nn421}, we have  $\O_{\bbk_0}$-linear embeddings 
$$\gs_\bbk \into W(\O_\mathbb{C}^\flat), \quad \O_{\E_\bbk}\into W(\mathbb{C}^\flat),$$
where $\O_\mathbb{C}^\flat$ and $\mathbb{C}^\flat$ are the tiltings, and where we regard  $\underline{\pi}$ as an element in $\O_\mathbb{C}^\flat$.
We can similarly define $\O_{\wh{\E}^\ur_\bbk} $ and $\wh{\gs}_\bbk^\ur$.
There are $\varphi$-equivariant embeddings 
\begin{equation}\label{1}
\begin{tikzcd}
\gs \arrow[r, hook] \arrow[d, hook]      & \wh{\gs}^\ur \arrow[r, hook] \arrow[d, hook]      & W(\O_{C}^\flat) \arrow[d, "{i_\varphi, \simeq}"] \\
\gs_\bbk \arrow[r, hook] \arrow[d, hook] & \wh{\gs}_\bbk^\ur \arrow[d, hook] \arrow[r, hook] & W(\O_\mathbb{C}^\flat) \arrow[d, hook]           \\
\O_{\E_\bbk} \arrow[r, hook]             & \O_{\wh{\E}^\ur_\bbk} \arrow[r, hook]             & W(\mathbb{C}^\flat)                              \\
\OE \arrow[r, hook] \arrow[u, hook]      & \O_{\wh{\E}^\ur} \arrow[r, hook] \arrow[u, hook]  & W({C}^\flat) \arrow[u, "{i_\varphi, \simeq}"']  
\end{tikzcd}
\end{equation} 
Analogous to Def. \ref{321} and Def. \ref{421}, we can define $\Mod_{\gs_\bbk}^\varphi$ and $\Mod_{\O_{\E_\bbk}}^\varphi$. 
\end{notation}

\begin{construction} \label{423c}
In the following, we will explain the content of the following commutative diagram of functors. Here $\simeq$ signifies an equivalence of categories, and a hooked arrow signifies a fully faithful functor.
\begin{equation} \label{12}
\begin{tikzcd}
\Mod_{\gs }^\varphi \arrow[rr, "(1)", hook] \arrow[dd, "(2)", hook'] &  & \Mod_{\gs_\bbk}^\varphi \arrow[rr, "(5)", hook] \arrow[dd, "(4)", hook'] &  & \Rep_{\Zp}(G_{\bbkinfty}) \arrow[dd, "(7)="] \\
                                                                     &  &                                                                          &  &                                           \\
 \Mod_{\O_{\E }}^\varphi \arrow[rr, "(3) \simeq"]                    &  & \Mod_{\O_{\E_\bbk}}^\varphi \arrow[rr, "(6) \simeq"]                     &  & \Rep_{\Zp}(G_{\bbkinfty})                
\end{tikzcd}
\end{equation} 
Here
\begin{itemize}
\item The functors (1)-(4) are defined via scalar extensions in \eqref{1}; they are obviously well-defined and the left square is commutative.
\item The functor (5) is defined via $ \wt\gm\mapsto  (\wt\gm \otimes_{\gs_\bbk} \wh{\gs}_\bbk^\ur)^{\varphi=1}$.
\item The functor  (6) is defined via $ \wt M \mapsto (\wt M \otimes_{\O_{\E_\bbk}} \O_{\wh{\E}^\ur_\bbk})^{\varphi=1} $.
\item Note that $\bbk$ has perfect residue field, hence the commutativity of the right square, as well as the equivalence of (6), the full faithfulness of (4) and (5), are proved in \cite[\S 2.1]{Kis06}.
\end{itemize}
 \emph{It   remains} to show the equivalence of (3) and the full faithfulness of (1), which is carried out in Prop. \ref{prop42} and Prop. \ref{prop43} respectively.
\end{construction}

\begin{prop}\label{prop42}
We have tensor  exact  equivalences  
$$ \Mod_{\OE}^\varphi \xrightarrow{(3)}  \Mod_{\O_{\E_\bbk}}^\varphi \xrightarrow{(6)} \Rep_{\Zp}(G_{\bbkinfty}).$$ 
Given $T \in  \Rep_{\Zp}(G_{\bbkinfty})$,   the corresponding objects $M \in \Mod_{\OE}^\varphi$ and $M_\bbk \in \Mod_{\O_{\E_\bbk}}^\varphi$ are
\begin{equation}\label{eqmmk}
M=(T\otimes_\zp \O_{\wh{\E}^\ur})^{G_{\bbkinfty}}, \quad M_\bbk =  (T\otimes_\zp \O_{\wh{\E_\bbk}^\ur})^{G_{\bbkinfty}}.
\end{equation}
\end{prop}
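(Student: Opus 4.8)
The functor $(6)$ is the field-of-norms equivalence in the \emph{perfect} residue field case and is cited from \cite[\S 2.1]{Kis06}; the genuine content is therefore that the base-change functor $(3)$, given by $M\mapsto M\otimes_{\OE}\O_{\E_\bbk}$, is an equivalence. The plan is \emph{not} to attack $(3)$ in isolation, but to show that the composite $(6)\circ(3)$ is exactly Fontaine's classification of $\Zp$-representations by \'etale $\varphi$-modules, adapted to the imperfect residue field and realized through the period ring $\O_{\wh{\E}^\ur}$; concretely, I would prove that
$$ \Mod_{\OE}^\varphi \longrightarrow \Rep_{\Zp}(G_{\bbkinfty}),\qquad M\longmapsto (M\otimes_{\OE}\O_{\wh{\E}^\ur})^{\varphi=1}, $$
is an equivalence with quasi-inverse $T\mapsto (T\otimes_{\Zp}\O_{\wh{\E}^\ur})^{G_{\bbkinfty}}$. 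Since $(6)$ is already known to be an equivalence, the two-out-of-three property forces $(3)$ to be one as well, and both explicit formulas can then be read off directly.

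The first preliminary step is to pin down the group. Write $\E_0:=\OE/p=k_K((u))$ and $\E_{0,\bbk}:=\O_{\E_\bbk}/p=\mathbf{k}((u))$, where $\mathbf{k}$ is the (perfect) radical closure of $k_K$. Then $\E_{0,\bbk}/\E_0$ is purely inseparable, so restriction gives an isomorphism of absolute Galois groups $G_{\E_0}\cong G_{\E_{0,\bbk}}$; composing with the perfect-case field-of-norms identification $G_{\E_{0,\bbk}}\cong G_{\bbkinfty}$ underlying $(6)$ yields $G_{\E_0}\cong G_{\bbkinfty}$, and I would work throughout with this single group. The subtlety to track is that although the same group acts, the two period rings genuinely differ: by diagram \eqref{1} one has $\O_{\wh{\E}^\ur}\subset \O_{\wh{\E_\bbk}^\ur}$, equivariantly for $\varphi$ and $G_{\bbkinfty}$, with reductions $\E_0^{\sep}\subsetneq \E_{0,\bbk}^{\sep}=\mathbf{k}\cdot\E_0^{\sep}$, and correspondingly $(\O_{\wh{\E}^\ur})^{G_{\bbkinfty}}=\OE$ while $(\O_{\wh{\E_\bbk}^\ur})^{G_{\bbkinfty}}=\O_{\E_\bbk}$.

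To run the descent over $\OE$ I would verify the four standard ingredients for $\O_{\wh{\E}^\ur}$, checking that none of them needs perfectness of the residue field. First, $(\O_{\wh{\E}^\ur})^{\varphi=1}=\Zp$: this follows by reduction mod $p$ to $(\E_0^{\sep})^{\varphi=1}=\Fp$ and $p$-adic lifting, or simply by intersecting the perfect-case identity $(\O_{\wh{\E_\bbk}^\ur})^{\varphi=1}=\Zp$ with $\O_{\wh{\E}^\ur}$. Second, $(\O_{\wh{\E}^\ur})^{G_{\bbkinfty}}=\OE$ by Galois descent along the ind-(finite \'etale) extension $\O_{\E^\ur}/\OE$, passed to $p$-adic completions. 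Third, $\varphi-1$ is surjective on $\O_{\wh{\E}^\ur}$: modulo $p$ this is Artin--Schreier surjectivity on the separably closed field $\E_0^{\sep}$, valid for any field of characteristic $p$, and one lifts by successive approximation. Fourth, for $M\in\Mod_{\OE}^\varphi$ the module $T:=(M\otimes_{\OE}\O_{\wh{\E}^\ur})^{\varphi=1}$ is finite free over $\Zp$ of rank $\rank_{\OE}M$ and $T\otimes_{\Zp}\O_{\wh{\E}^\ur}\to M\otimes_{\OE}\O_{\wh{\E}^\ur}$ is an isomorphism. Granting these, the two functors are quasi-inverse and tensor exact, giving the equivalence with $M=(T\otimes_{\Zp}\O_{\wh{\E}^\ur})^{G_{\bbkinfty}}$; that this agrees with $(6)\circ(3)$ follows from $\O_{\wh{\E}^\ur}\subset\O_{\wh{\E_\bbk}^\ur}$ together with $(\O_{\wh{\E_\bbk}^\ur})^{\varphi=1}=\Zp$, which forces $(M\otimes_{\OE}\O_{\wh{\E}^\ur})^{\varphi=1}=(M_\bbk\otimes_{\O_{\E_\bbk}}\O_{\wh{\E_\bbk}^\ur})^{\varphi=1}$, and the formula for $M_\bbk$ is just $(6)$.

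I expect the main obstacle to be the fourth ingredient in the imperfect setting, which modulo $p$ amounts to the triviality of every \'etale $\varphi$-module over the separably closed but \emph{imperfect} field $\E_0^{\sep}$. The usual proofs assume a perfect residue field and must be revisited: finding a $\varphi$-fixed basis is equivalent to solving the change-of-basis equation $\varphi(P)=A^{-1}P$ (with $A\in\GL_n(\E_0)$ the matrix of $\varphi$), whose defining equations are separable, of Artin--Schreier / Lang--Steinberg type, hence solvable over $\E_0^{\sep}$ even without perfectness; one then propagates finiteness and freeness of the $\varphi$-invariants from characteristic $p$ up to $\O_{\wh{\E}^\ur}$ by d\'evissage and $p$-adic completeness. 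A secondary point deserving care is the discrepancy flagged above: the $G_{\bbkinfty}$-invariants of $\O_{\wh{\E}^\ur}$ return $\OE$ rather than $\O_{\E_\bbk}$, and it is precisely this that makes the descended object $M$ an object of $\Mod_{\OE}^\varphi$ rather than of $\Mod_{\O_{\E_\bbk}}^\varphi$.
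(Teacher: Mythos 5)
Your reduction of everything to a single identification of Galois groups is the right architectural move, but the justification you give for it is false, and this is a genuine gap. When $d\geq 1$, the extension $\E_{0,\bbk}=\mathbf{k}((u))$ over $\E_0=k_K((u))$ is \emph{not} purely inseparable --- it is not even algebraic. For instance, taking $\bar t_1$ from the $p$-basis of $k_K$, the element $x=\sum_{i\geq 1}\bar t_1^{1/p^i}u^i\in\mathbf{k}((u))$ has $x^{p^n}\notin k_K((u))$ for every $n$, since its coefficients have unbounded radical depth; one checks moreover that the algebraic closure of $\E_0$ inside $\E_{0,\bbk}$ is only the proper subfield $\bigcup_n k_K^{1/p^n}((u))$. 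For the same reason your asserted identity $\E_{0,\bbk}^{\sep}=\mathbf{k}\cdot\E_0^{\sep}$ fails: the right-hand side contains only series of bounded depth. What is true is that $\E_{0,\bbk}$ is the $u$-adic \emph{completion} of the purely inseparable extension $\bigcup_n k_K^{1/p^n}((u))$, and the completion step costs a real input, namely the invariance of finite \'etale covers (equivalently, of absolute Galois groups) under completion of Henselian rings. This is precisely where the paper invokes \cite[Prop. 5.4.53]{GabberRamero}, applied to the pairs $(k_K[[u]],(u))$ and $(\mathbf{k}[[u]],(u))$, which are Henselian along $(u)$ and have the same completed perfection $\wtE_{\bbkinfty}$; combined with the classical field-of-norms isomorphism for $\bbk$ this yields the chain $G_{\bbkinfty}\simeq G_{\mathbf{E}_{\bbkinfty}}\simeq G_{\wtE_{\bbkinfty}}\simeq G_{\mathbf{E}_{\Kinfty}}$. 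Your step 1 must be replaced by this (or an equivalent Krasner-type) argument; without it there is no group identification along which to run your descent.

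Once that is patched, your four ingredients are sound, and your handling of what you call the main obstacle --- triviality of \'etale $\varphi$-modules over the separably closed but \emph{imperfect} field $\E_0^{\sep}$ via the separability of the Lang--Steinberg equation $\varphi(P)=A^{-1}P$ --- is correct in substance. But note you are re-proving by hand exactly what the paper cites: after the same d\'evissage to the mod-$p$ categories, the paper applies \cite[Prop. 4.1.1]{Kat72}, which gives the \'etale-$\varphi$-module classification over an \emph{arbitrary} field of characteristic $p$ (no perfectness hypothesis), simultaneously to $k_K((u))$ and $\mathbf{k}((u))$, and then reads off the formulas \eqref{eqmmk} from the invariant computations $(\O_{\wh{\E}^\ur})^{G_{\bbkinfty}}=\OE$ and $(\O_{\wh{\E_\bbk}^\ur})^{G_{\bbkinfty}}=\O_{\E_\bbk}$, just as you do. So apart from the gap above, your route (prove the composite functor is an equivalence, then conclude for $(3)$ by two-out-of-three) and the paper's (identify the groups, then quote Katz on each side) differ only in packaging, not in mathematical content.
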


\begin{rem}
Let $R$ be as in Assumption \ref{assr}. Suppose  
$R$ is a domain, and satisfies Assumptions (i), (i)' and (ii) there. 
Then \cite[Prop. 7.7]{Kim15} proves the \emph{relative} version of (and  hence by Lem. \ref{lemok}, implies) Prop. \ref{prop42}. We nonetheless give a sketch of Kim's proof, to illustrate the useful ideas.
\end{rem}

\begin{proof}[Proof of Prop. \ref{prop42}]
We first introduce some notations, let 
$$\mathbf{E}^+_{\Kinfty}:=\gs/p\gs=k_K[[u]], \quad\mathbf{E}^+_{\bbkinfty}:=\gs_\bbk/p\gs_\bbk=\mathbf{k}[[u]].$$
Note that $\mathbf{E}^+_{\Kinfty}$ and $\mathbf{E}^+_{\bbkinfty}$   have the same perfect closures as $\mathbf{k}$ is the perfect closure of $k_K$; let $\wtE^+_{\bbkinfty}$ be the $u$-adic completion of their common perfect closure. 
Let
$$ \mathbf{E}_{\Kinfty}=\mathbf{E}^+_{\Kinfty}[1/u], \quad \mathbf{E}_{\bbkinfty}=\mathbf{E}^+_{\bbkinfty}[1/u], \quad   \wtE_{\bbkinfty}=\wtE^+_{\bbkinfty}[1/u].$$

By d\'evissage, to prove the proposition, it suffices to prove equivalences of the corresponding $p$-torsion categories. Namely,  it suffices to show equivalences of categories
\begin{equation}\label{eqeq}
\Mod^\varphi_{\mathbf{E}_{\Kinfty}}  \to \Mod^\varphi_{\mathbf{E}_{\bbkinfty}} \to \Rep_{\Fp}(G_{\bbkinfty}),
\end{equation}
where the categories are defined in the obvious fashion.
Note that we have the following isomorphisms of Galois groups
\begin{equation} \label{e424}
G_{\bbkinfty} \simeq G_{\mathbf{E}_{\bbkinfty}} \simeq G_{\wtE_{\bbkinfty}} \simeq G_{\mathbf{E}_{\Kinfty}}.
\end{equation}
Here, the first isomorphism follows from classical theory of field of norms (as $\bbk$ has perfect residue field); the other isomorphisms follow from  \cite[Prop. 5.4.53]{GabberRamero} as both  $\mathbf{E}^+_{\Kinfty}$ and $\mathbf{E}^+_{\bbkinfty}$ are Henselian rings with respect to the ideal generated by $u$. Thus we can apply \cite[Prop. 4.1.1]{Kat72} to conclude equivalences in \eqref{eqeq}.
Finally, from \eqref{e424}, we have
\begin{equation}\label{e425}
(\O_{\wh{\E}^\ur})^{G_{\bbkinfty}}= \O_{ {\E}}, \quad    (\O_{\wh{\E_\bbk}^\ur})^{G_{\bbkinfty}} =\O_{ {\E_\bbk}}.
\end{equation}
This proves \eqref{eqmmk}.
\end{proof}


\begin{prop}\label{prop43}
We have a chain of fully faithful functors:
$$ \Mod_{\gs }^\varphi \xrightarrow{(1)} \Mod_{\gs_\bbk}^\varphi \xrightarrow{(5)} \Rep_{\Zp}(G_{\bbkinfty}).$$
\end{prop}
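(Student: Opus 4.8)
The plan is to prove that the functor $(1)$ is fully faithful; the full faithfulness of $(5)$ has already been recorded in Construction~\ref{423c} (it holds because $\bbk$ has perfect residue field, by \cite[\S 2.1]{Kis06}), so the asserted chain will follow by composing two fully faithful functors. Faithfulness of $(1)$ is immediate: the ring map $\gs=\oko[[u]]\hookrightarrow \gs_\bbk=W(\mathbf{k})[[u]]$ induced by $i_\varphi\colon\oko\hookrightarrow W(\mathbf{k})$ is injective, and since every object is finite free, $\gn\hookrightarrow\gn\otimes_\gs\gs_\bbk$ for any $\gn$, whence two morphisms agreeing after base change to $\gs_\bbk$ already agree over $\gs$.

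For fullness I would descend along the \'etale localization functors of the diagram \eqref{12}. Fix $\gm,\gn\in\Mod_{\gs}^\varphi$ and a morphism $g\colon \gm\otimes_\gs\gs_\bbk\to\gn\otimes_\gs\gs_\bbk$ in $\Mod_{\gs_\bbk}^\varphi$. Write $M=\gm\otimes_\gs\OE$ and $N=\gn\otimes_\gs\OE$, as well as $M_\bbk=M\otimes_{\OE}\O_{\E_\bbk}=\gm\otimes_\gs\O_{\E_\bbk}$ and $N_\bbk=N\otimes_{\OE}\O_{\E_\bbk}$; note that $E(u)$ becomes a unit in $\OE$ (since $E(u)\equiv u^e \bmod p$ is Eisenstein and $u$ is invertible), so $M$ and $N$ are genuine \'etale $\varphi$-modules. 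Base changing $g$ to $\O_{\E_\bbk}$ produces a morphism $g\otimes\O_{\E_\bbk}\colon M_\bbk\to N_\bbk$ in $\Mod_{\O_{\E_\bbk}}^\varphi$. Since the functor $(3)$ is an equivalence by Prop.~\ref{prop42}, it is in particular fully faithful, so there is a unique $\varphi$-equivariant $f\colon M\to N$ over $\OE$ with $f\otimes_{\OE}\O_{\E_\bbk}=g\otimes\O_{\E_\bbk}$.

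It then remains to check the integrality statement $f(\gm)\subseteq\gn$, after which $f$ restricts to a morphism $\gm\to\gn$ in $\Mod_{\gs}^\varphi$ whose base change along $(1)$ is $g$ (the latter because both become $g\otimes\O_{\E_\bbk}$ inside $N_\bbk$, and $\gn\otimes_\gs\gs_\bbk\hookrightarrow N_\bbk$ is injective). Viewing everything inside $N_\bbk$, for $x\in\gm$ one has $f(x)=(f\otimes\O_{\E_\bbk})(x)=(g\otimes\O_{\E_\bbk})(x)=g(x)\in\gn\otimes_\gs\gs_\bbk$, the last membership holding because $g$ is a morphism of Breuil-Kisin modules over $\gs_\bbk$; on the other hand $f(x)\in N=\gn\otimes_\gs\OE$. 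Hence $f(\gm)\subseteq N\cap(\gn\otimes_\gs\gs_\bbk)$, and since $\gn$ is finite free this intersection equals $\gn\otimes_\gs(\OE\cap\gs_\bbk)$ computed coefficientwise. The crux is therefore the elementary identity $\OE\cap\gs_\bbk=\gs$ inside $\O_{\E_\bbk}$: an element of $\O_{\E_\bbk}$ has uniquely determined coefficients $c_i\in W(\mathbf{k})$, $i\in\Z$, lying in $\gs_\bbk$ exactly when $c_i=0$ for $i<0$ and in $\OE$ exactly when all $c_i\in\oko$, so the intersection consists of the $\sum_{i\ge 0}c_iu^i$ with $c_i\in\oko$, namely $\gs$. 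This gives $f(\gm)\subseteq\gn$ and completes the proof.

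I expect the only real subtlety to be the bookkeeping of the several inclusions inside the common \'etale envelope $N_\bbk$, together with the coefficientwise description of $\OE\cap\gs_\bbk$; the conceptual weight is carried entirely by the equivalence $(3)$ of Prop.~\ref{prop42}, which lets one descend the morphism of \'etale modules, while integrality comes essentially for free from $g$ already being a morphism of Breuil-Kisin modules over $\gs_\bbk$. In particular, unlike the perfect-residue-field case, no separate ``integral'' lemma in the style of \cite[\S 2.1]{Kis06} is needed here.
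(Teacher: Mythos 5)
Your proof is correct, and it rests on exactly the two pillars the paper uses: the \'etale-level equivalence (3) of Prop.~\ref{prop42} and the intersection identity $\gs=\OE\cap\gs_\bbk$ inside $\O_{\E_\bbk}$ (equivalently, $\gm=M\cap(\gm\otimes_\gs\gs_\bbk)$ inside $\gm\otimes_\gs\O_{\E_\bbk}$, using freeness). The organization, however, differs from the paper's in a small but genuine way. The paper proves faithfulness of (1) and then fullness of the \emph{composite} $(5)\circ(1)$: starting from a morphism $T_1\to T_2$ of $G_\bbkinfty$-representations, it descends through the equivalences (6) and (3) to morphisms over $\O_{\E_\bbk}$ and $\OE$, invokes the full faithfulness of (5) (Kisin's \cite[Prop.~2.1.12]{Kis06}) to produce the morphism over $\gs_\bbk$, and then intersects; full faithfulness of (1) itself is only a formal consequence of this (full faithfulness of (5) plus fullness of the composite plus faithfulness of (1)), a deduction the paper leaves implicit. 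You instead prove fullness of (1) directly: the morphism over $\gs_\bbk$ is your starting datum, so you need just one descent step, from $\O_{\E_\bbk}$ down to $\OE$ via full faithfulness of (3), and Kisin's result enters only to record that the second arrow (5) of the chain is fully faithful. This buys a marginally more economical argument that matches the literal statement of the proposition, at the cost of nothing: your checks along the way (that $E(u)$ becomes a unit in $\OE$ so that $M,N$ are honestly \'etale, and that base change of the restricted morphism recovers $g$ because $\gn\otimes_\gs\gs_\bbk\hookrightarrow N_\bbk$) are all sound. You also supply the coefficientwise verification of $\OE\cap\gs_\bbk=\gs$, via uniqueness of Laurent expansions in $\O_{\E_\bbk}$, which the paper asserts without proof; that is the right argument, the only point worth making explicit being that $i_\varphi(\oko)\cap pW(\mathbf{k})=p\,i_\varphi(\oko)$ (so membership of all coefficients in $\oko$ together with $p$-adic decay in $W(\mathbf{k})$ really does give an element of $\OE$), which follows from injectivity of $k_K\hookrightarrow\mathbf{k}$.
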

\begin{proof}
Let $\gm_1, \gm_2 \in  \Mod_{\gs }^\varphi$.
Let 
$$\wt\gm_1, \wt\gm_2 \in \Mod_{\gs_\bbk}^\varphi, \quad 
T_1, T_2\in \Rep_{\Zp}(G_{\bbkinfty}), \quad M_1, M_2 \in \Mod_{\OE}^\varphi, \quad  \wt M_1, \wt M_2 \in \Mod_{\O_{\E_\bbk}}^\varphi$$
  be the corresponding objects. 
  Recall full faithfulness of (5) is known by \cite[Prop. 2.1.12]{Kis06}. The functor (1) is obviously faithful, hence it suffices to show fullness of the composite functor $(5)\circ (1)$.
Given a morphism $T_1 \to T_2$, the equivalences in Prop. \ref{prop42} supplies a unique corresponding morphism $M_1 \to M_2$ and a unique corresponding morphism  $\wt M_1 \to \wt M_2$; the full faithfulness of (5)  supplies a unique corresponding   morphism $\wt\gm_1 \to \wt\gm_2$. Since 
$$\gs =\OE \cap \gs_\bbk \subset \O_{\E_\bbk},$$ 
we have
$$\gm_i =M_i \cap \wt\gm_i \subset \wt M_i.$$ 
 Hence  the desired morphism $\gm_1 \to \gm_2$ is induced by the aforementioned morphisms.
\end{proof}

\begin{notation}
From now on, we denote the fully faithful functor (5) as
$$T_{\gs_\bbk}:  \Mod_{\gs_\bbk}^\varphi \to \Rep_{\Zp}(G_{\bbkinfty}),$$
and denote the  composite fully faithful functor $(5)\circ (1)$ above as
$$T_{\gs}: \Mod_{\gs }^\varphi \to \Rep_{\Zp}(G_{\bbkinfty}). $$
 \end{notation}

\begin{lemma}\label{45}
Let $\gm \in \Mod_\gs^\varphi$,  let $V=T_\gs(\gm)\otimes_\Zp \Qp \in \Rep_\Qp(G_{\bbkinfty})$, and let $M[1/p]=\gm\otimes_\gs \E$ where $\E=\O_\E[1/p]$. 
Then the map $\gn \mapsto T_\gs(\gn)$ induces a bijection between $\varphi$-stable $\gs$-submodules $\gn \in M[1/p]$ such that $\E\otimes_\gs \gn=M[1/p]$ and $\gn/\varphi^\ast(\gn)$ is killed by a power of $E(u)$, and $G_{\bbkinfty}$-stable  $\Zp$-lattices $L \subset V $.
\end{lemma}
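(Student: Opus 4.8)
The plan is to reduce the statement to the analogous bijection for \emph{étale} $\varphi$-modules, and then to descend the known perfect-residue-field case along the inclusion $\gs = \OE \cap \gs_\bbk$ that already underlies the proof of Prop.~\ref{prop43}.

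First I would extract from the commutative diagram \eqref{12} the compatibility
$$T_\gs(\gn) = T_{\OE}(\gn\otimes_\gs\OE), \qquad T_{\OE} := (6)\circ(3)\colon \Mod_{\OE}^\varphi \xrightarrow{\ \simeq\ } \Rep_{\Zp}(G_{\bbkinfty}),$$
which holds because the right square gives $(5) = (6)\circ(4)$ and the left square reads $(4)\circ(1) = (3)\circ(2)$, so that $(5)\circ(1) = (6)\circ(3)\circ(2)$. Since $T_{\OE}$ is an exact tensor equivalence and $\OE$ is a complete discrete valuation ring with uniformizer $p$, passing to isogeny categories it identifies the $\OE$-lattices $M'\subset M[1/p]$ with the $G_{\bbkinfty}$-stable $\Zp$-lattices $L\subset V$. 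It therefore suffices to prove that $\gn\mapsto \gn\otimes_\gs\OE =: M'$ is a bijection from the $\gn$ in the lemma (which we take to be finite free, i.e.\ objects of $\Mod_\gs^\varphi$ realized inside $M[1/p]$) onto the set of \emph{all} $\OE$-lattices in $M[1/p]$.

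To see this map is well defined, the key point is that $E(u)$ is a unit in $\OE$: one has $E(u)\equiv u^e$ modulo $p$ while $u\in\OE^\times$, and $\OE$ is $p$-adically complete. Hence the hypothesis that $\gn/\varphi^\ast(\gn)$ is killed by a power of $E(u)$ becomes, after $\otimes_\gs\OE$, the statement that $1\otimes\varphi\colon\varphi^\ast M'\to M'$ has cokernel killed by a unit, i.e.\ is an isomorphism; as $\E\otimes_\gs\gn = M[1/p]$ and $\gn$ is finite free, $\gn\otimes_\gs\OE\to M[1/p]$ is injective with image an $\OE$-lattice, so $M'\in\Mod_{\OE}^\varphi$ and $T_{\OE}(M') = T_\gs(\gn) = L$. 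Both injectivity and surjectivity I would then deduce from the perfect case over $\bbk$ together with the gluing identity $\gs = \OE\cap\gs_\bbk$ inside $\O_{\E_\bbk}$. Writing $\gm_\bbk := \gm\otimes_\gs\gs_\bbk$ and $\wt M := \gm_\bbk\otimes_{\gs_\bbk}\O_{\E_\bbk}$, so that $M[1/p]\subset\wt M[1/p]$, Kisin's theory (cf.\ \cite[\S 2.1]{Kis06}) makes $\wt\gn\mapsto T_{\gs_\bbk}(\wt\gn)$ a bijection between $\varphi$-stable $\gs_\bbk$-lattices of $E(u)$-height $\le h$ in $\wt M[1/p]$ and $\Zp$-lattices in $V$. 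For surjectivity I would start from an $\OE$-lattice $M'$, take the associated $\wt\gn$ over $\gs_\bbk$ (so $\wt\gn\otimes_{\gs_\bbk}\O_{\E_\bbk} = M'\otimes_{\OE}\O_{\E_\bbk}$), and set $\gn := M'\cap\wt\gn\subset\wt M[1/p]$; for injectivity, if $\gn\otimes_\gs\OE = \gn'\otimes_\gs\OE$ then base change to $\gs_\bbk$ and the perfect case force $\gn\otimes_\gs\gs_\bbk = \gn'\otimes_\gs\gs_\bbk$, whence $\gn = M'\cap(\gn\otimes_\gs\gs_\bbk) = \gn'$ via the reconstruction $\gn = (\gn\otimes_\gs\OE)\cap(\gn\otimes_\gs\gs_\bbk)$ already used in Prop.~\ref{prop43}.

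The main obstacle is this last descent step: checking that $\gn := M'\cap\wt\gn$ is finite free over $\gs$, has $E(u)$-height $\le h$, and satisfies $\gn\otimes_\gs\gs_\bbk = \wt\gn$ and $\gn\otimes_\gs\OE = M'$. I expect these to follow by faithfully flat descent along $\gs\to\gs_\bbk$ combined with the intersection $\gs = \OE\cap\gs_\bbk$, exactly parallel to the reconstruction $\gm_i = M_i\cap\wt\gm_i$ carried out on the étale level in Prop.~\ref{prop43}; the one extra point is that the $E(u)$-height is preserved under the intersection, which is harmless because $E(u)$ already lies in $\gs$ and the bound $h$ is inherited from $\wt\gn$ through $\gn\otimes_\gs\gs_\bbk = \wt\gn$.
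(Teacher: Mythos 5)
Your first two steps are sound and coincide with the paper's own reduction (which follows \cite[Lem.~2.1.15]{Kis06}): the identity $T_\gs=(6)\circ(3)\circ(2)$ from diagram \eqref{12} does reduce everything to $\varphi$-stable $\OE$-lattices $M'\subset M[1/p]$ (this is Prop.~\ref{prop42}, cf.~\eqref{eqmmk}); $E(u)$ is indeed a unit in $\OE$; and your injectivity argument (perfect case plus $\gs=\OE\cap\gs_\bbk$) is correct, although the paper gets uniqueness more directly from full faithfulness of $T_\gs$, i.e.\ Prop.~\ref{prop43}. The genuine gap is the surjectivity step, exactly where you write ``I expect these to follow by faithfully flat descent.'' Descent is not applicable here: you have no descent datum, because the only given isomorphism lives over $\O_{\E_\bbk}$, which is a completion of (and strictly larger than) $\OE\otimes_\gs\gs_\bbk$, and nothing at all is given over $\gs_\bbk\otimes_\gs\gs_\bbk$. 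Nor is the Cartesian square formed by $\gs$, $\OE$, $\gs_\bbk$, $\O_{\E_\bbk}$ a Milnor-type patching square: already modulo $p$ one has $k_K((u))+\mathbf{k}[[u]]\subsetneq\mathbf{k}((u))$ as soon as $k_K$ is imperfect (every element of the sum has all its negative-power coefficients in $k_K$), so $\OE+\gs_\bbk\subsetneq\O_{\E_\bbk}$, and glueing of finite projective modules along such a square fails in general. Concretely, for an arbitrary pair $(M',\wt\gn)$ agreeing over $\O_{\E_\bbk}$ there is no reason that $\gn:=M'\cap\wt\gn$ should be finitely generated over $\gs$, and even the identity $\gn\otimes_\gs\OE=M'$ is in doubt: the coordinates of $m\in M'$ with respect to a $\gs_\bbk$-basis of $\wt\gn$ lie in $\O_{\E_\bbk}$ and may involve unboundedly many negative powers of $u$, so no $u^N m$ need land in $\wt\gn$. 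The analogy with Prop.~\ref{prop43} is misleading: there the intersection $\gm_i=M_i\cap\wt\gm_i$ merely \emph{reconstructs} a module already known to exist over $\gs$, whereas here the $\gs$-structure is precisely what has to be constructed.

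What rescues the intersection in the actual situation is the $\varphi$-structure together with the finite-$E(u)$-height hypothesis, and supplying that argument is the real content of the lemma. The paper's proof follows Kisin's Lem.~2.1.15 with the perfect-residue inputs replaced by Prop.~\ref{prop42} and Prop.~\ref{prop43}: the equivalence of Prop.~\ref{prop42} converts $G_\bbkinfty$-stable lattices $L\subset V$ into $\varphi$-stable $\OE$-lattices $M'$; uniqueness of a finite-$E(u)$-height $\gs$-lattice $\gn$ with $\gn\otimes_\gs\OE=M'$ follows from the full faithfulness of $T_\gs$ (Prop.~\ref{prop43}); and existence comes from the Fontaine-style construction invoked by Kisin (the result [Fon90, B~1.4.2]) of the maximal $\varphi$-stable $\gs$-submodule of finite $E(u)$-height inside a $\varphi$-stable $\OE$-lattice. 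That last argument is pure commutative algebra over $\gs=\oko[[u]]$ and goes through verbatim with imperfect residue field (for instance, $\gs$ is still finite over $\varphi(\gs)$, now of degree $p^{d+1}$), with no passage to $\bbk$ needed. To complete your route you would have to prove the $\varphi$-equivariant glueing claim by exactly such a Frobenius/height bootstrapping; in other words, you would end up reproving the Fontaine-type existence theorem rather than bypassing it, so the descent detour buys nothing.
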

\begin{proof}
This is the analogue of \cite[Lem. 2.1.15]{Kis06}, and the proof follows the same argument, by using our Prop. \ref{prop42} and Prop. \ref{prop43}.
\end{proof}

The following lemma will be used later.
\begin{lemma}\label{4211}
\begin{enumerate}
\item  For $\wt\gm \in \Mod_{\gs_\bbk}^\varphi$, there is an $G_{\bbkinfty}$-equivariant isomorphism 
$$T_{\gs_\bbk}(\wt\gm) \simeq (\wt\gm \otimes_{\gs_\bbk} W(\mathbb{C}^\flat))^{\varphi=1}.$$
\item For $ \gm \in \Mod_{\gs }^\varphi$, there is an $G_{\bbkinfty}$-equivariant isomorphism 
$$T_{\gs }( \gm) \simeq ( \gm \otimes_\gs W({C}^\flat))^{\varphi=1}.$$
\end{enumerate} 
\end{lemma}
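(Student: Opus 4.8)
The plan is to reduce the two statements to the already-established computation of the Galois representations attached to étale $\varphi$-modules over the respective base rings, which is exactly the content of Prop. \ref{prop42} together with the definition of the functors. Let me treat part (2) first, as part (1) is the analogous (and in fact cleaner) perfect-residue-field statement that can be read off from \cite{Kis06}.

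For part (2), the starting point is the defining formula $T_\gs(\gm) = T_{\gs_\bbk}(\wt\gm)$ where $\wt\gm = \gm \otimes_\gs \gs_\bbk$ is the image under functor (1), and by Prop. \ref{prop42} we have the description $T = M^{G_{\bbkinfty}}$-type formulas via the maximal unramified extensions. Concretely, $T_\gs(\gm)$ is obtained by passing to the étale $\varphi$-module $M = \gm \otimes_\gs \OE \in \Mod_{\OE}^\varphi$ and then applying functor (6). So I would first record that $T_\gs(\gm) = (M \otimes_{\OE} \O_{\wh\E^\ur})^{\varphi=1}$, which follows by combining Prop. \ref{prop42} (the equivalence $(3)$ composed with $(6)$, together with \eqref{e425}) with the compatibility of the functors in diagram \eqref{12}. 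The goal is then to upgrade the coefficient ring from $\O_{\wh\E^\ur}$ to $W(C^\flat)$ and to replace $M$ by $\gm$ itself.

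The key step is a faithfully flat / base-change argument. I would argue that the natural map
\begin{equation*}
(\gm \otimes_\gs W(C^\flat))^{\varphi=1} \xrightarrow{\ \sim\ } (M \otimes_{\OE} W(C^\flat))^{\varphi=1}
\end{equation*}
is an isomorphism, because $W(C^\flat) = \O_{\wh\E^\ur} \otimes_? \cdots$ absorbs the localization $u \mapsto [\upi]$ invertible in passing from $\gs$ to $\OE$; more precisely, since $\gm \otimes_\gs \OE = M$ and $W(C^\flat)$ is an $\OE$-algebra, the left and right tensor products agree. Then I would show the inclusion $\O_{\wh\E^\ur} \inj W(C^\flat)$ induces an isomorphism on $\varphi=1$-invariants of a finite free étale $\varphi$-module; this is the standard fact that for such modules the $\varphi$-invariants are computed already over the completed maximal unramified extension $\O_{\wh\E^\ur}$ (equivalently over $W(\wt\E^{ur})$), using that $W(C^\flat)$ is $\varphi$-faithfully-flat over $\O_{\wh\E^\ur}$ and that $\bigl(W(C^\flat)\bigr)^{\varphi=1} = \Zp = \bigl(\O_{\wh\E^\ur}\bigr)^{\varphi=1}$. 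Combining these two isomorphisms with the formula for $T_\gs(\gm)$ yields $T_\gs(\gm) \simeq (\gm \otimes_\gs W(C^\flat))^{\varphi=1}$, $G_{\bbkinfty}$-equivariantly, since all maps in the diagram \eqref{1} are $G_{\bbkinfty}$-equivariant.

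Part (1) follows by the identical argument run one row up in the diagram \eqref{1}: here $\bbk$ has perfect residue field, so everything is the classical Fontaine-style computation, and one uses $\wt\gm \otimes_{\gs_\bbk} \O_{\E_\bbk} = \wt M$ together with $\bigl(W(\mathbb C^\flat)\bigr)^{\varphi=1} = \Zp = \bigl(\O_{\wh{\E}^\ur_\bbk}\bigr)^{\varphi=1}$. The main obstacle I anticipate is justifying that enlarging the period ring from $\O_{\wh\E^\ur}$ to $W(C^\flat)$ does not change the $\varphi=1$-invariants; the cleanest route is to observe that $W(C^\flat) = W(\O_C^\flat)[1/[\upi]]^\wedge$ contains $\O_{\wh\E^\ur}$ and that the invariants of a finite free étale $\varphi$-module over either ring are a finite free $\Zp$-module of the same rank $\rank_{\OE} M$, so the injection on invariants is forced to be an isomorphism by rank considerations. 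I would phrase this via the exactness of the Artin-Schreier-type sequence $0 \to \Zp \to W(C^\flat) \xrightarrow{\varphi-1} W(C^\flat) \to 0$ tensored with $M$, whose surjectivity relies on $C^\flat$ being algebraically closed.
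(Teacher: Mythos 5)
Your argument is correct in substance, but it is a genuinely different route from the paper's. The paper's proof is essentially a citation: Item (1) is quoted directly from \cite{GLAMJ} (Lem.\ 2.1.4 and Lem.\ 2.3.1 there), and Item (2) is then deduced by transport of structure, using that the $\varphi$-equivariant isomorphism $W(C^\flat)\simeq W(\mathbb{C}^\flat)$ induced by $i_\varphi$ is $G_\bbk$-equivariant and that $T_\gs=T_{\gs_\bbk}\circ(-\otimes_\gs \gs_\bbk)$ by definition; note the paper argues $(1)\Rightarrow(2)$, whereas you prove (2) directly and treat (1) as the same computation one row up in diagram \eqref{1}. Your self-contained route --- pass to $M=\gm\otimes_\gs\OE$, invoke the trivialization $M\otimes_{\OE}\O_{\wh{\E}^\ur}\simeq T\otimes_{\Zp}\O_{\wh{\E}^\ur}$ underlying the classical equivalence in Prop.\ \ref{prop42}, then extend scalars to $W(C^\flat)$ and use $\bigl(W(C^\flat)\bigr)^{\varphi=1}=\Zp=\bigl(\O_{\wh{\E}^\ur}\bigr)^{\varphi=1}$ --- is precisely the content of the cited lemmas of \cite{GLAMJ}, so what your approach buys is independence from that external reference, at the cost of redoing its (standard, Artin--Schreier-type) argument. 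One point worth making explicit in your write-up, since it is what makes the base change from a Breuil--Kisin module legitimate: $E([\upi])$ becomes a unit in $W(C^\flat)$ (its reduction mod $p$ is $\upi^{e}\neq 0$ in the field $C^\flat$), so $\gm\otimes_\gs W(C^\flat)=M\otimes_{\OE}W(C^\flat)$ is honestly \'etale even though $\gm$ itself is only of finite $E(u)$-height.

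One step as you phrased it is invalid, though harmless: an injection of finite free $\Zp$-modules of the same rank need \emph{not} be an isomorphism (multiplication by $p$ on $\Zp$ is a counterexample), so ``rank considerations'' do not by themselves force the comparison of $\varphi$-invariants to be surjective. You should simply drop that remark and rely on the other mechanism you sketch: once $M\otimes_{\OE}\O_{\wh{\E}^\ur}\simeq T\otimes_{\Zp}\O_{\wh{\E}^\ur}$ is in hand, base change gives $\gm\otimes_\gs W(C^\flat)\simeq T\otimes_{\Zp}W(C^\flat)$ as $\varphi$-modules, whence $(\gm\otimes_\gs W(C^\flat))^{\varphi=1}=T\otimes_{\Zp}\bigl(W(C^\flat)\bigr)^{\varphi=1}=T$ directly, with $G_{\bbkinfty}$-equivariance by naturality of all the embeddings in \eqref{1}; no injectivity or rank count is needed.
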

\begin{proof}
Item (1) follows from \cite[Lem. 2.1.4, Lem. 2.3.1]{GLAMJ}. Item (2) then follows. Note that the $\varphi$-equivariant isomorphism $W({C}^\flat) \simeq W(\mathbb{C}^\flat)$ induced by $i_\varphi$ is $G_\bbk$-equivariant.
\end{proof}

\subsection{Semi-stable representations and Breuil-Kisin modules} \label{new43}
Given $\gm \in \Mod_\gs^\varphi$ or $M \in \Mod_{\OE}^\varphi$, then the functor in \eqref{eqmmk} implies that $\gm$ or $M$ are fixed by $G_{\bbkinfty}$ inside $T\otimes_\zp \O_{\wh{\E}^\ur}$. In this subsection, we show that when $\gm$ and $M$ come from semi-stable representations, then they are furthermore fixed by $G_\Kinfty$ (inside some suitable space). This is important for  development   in the next subsection.

We first very briefly recall some notations of overconvergent elements, they are well-known and we only use some very elementary properties of them, cf. e.g. \cite{Ber02} for more details.

\begin{notation}
Denote 
$$\wta=W(C^\flat), \quad \wtb=W(C^\flat)[1/p], $$ 
and let $\wta^\dagger, \wtb^\dagger$ be the sub-ring of overconvergent elements, cf. \cite[\S 1.3]{Ber02}; indeed, $\wtb^\dagger =\wta^\dagger[1/p]$ is even a sub-field of the field $\wtb$.
Recall that  (cf. \cite[\S 2.1]{Ber02})
$$\wtb^\dagger = \cup_{n\geq 0}\wtb_{[r_n, +\infty]},$$
where $r_n := (p-1)p^{n-1}$ and $\wtb_{[r_n, +\infty]}$ is defined to be the subring of overconvergent elements with suitable range of overconvergence. 
By \cite[\S 2.2]{Ber02}, for each $n\geq 0$, there is a $G_K$-equivariant embedding
$$\iota_n: \wtb_{[r_n, +\infty]} \into \bdr^{\nabla  +}.$$
\end{notation}

 \begin{defn}\hfill
 \begin{enumerate}
 \item Let $\Mod_{\wta}^{\varphi, G_K}$ be the category of the following data: 
\begin{enumerate}
\item a finite free $\wta$-module $M$ ;
\item a  $\varphi_{\wta}$-semi-linear  and bijective  map $\varphi: M\to M$;
\item a $\varphi$-commuting $G_K$-action on $M$, which is semi-linear with respect to the $G_K$-action on $\wta$.
\end{enumerate}  
 \item Let $\Mod_{\wta^\dagger}^{\varphi, G_K}$ be similarly defined.
 \end{enumerate} 
 \end{defn}

 \begin{lemma}\label{thmwfr} 
 There are equivalences of categories
$$\Rep_{\Zp}(G_K) \to \Mod_{\wta^\dagger}^{\varphi, G_K} \to \Mod_{\wta}^{\varphi, G_K},$$
where the first functor sends $T$ to $ T\otimes_\zp \wta^\dagger$, and the second functor is defined by   base-change.
 \end{lemma}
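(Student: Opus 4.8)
The plan is to prove both functors are equivalences by first treating the $\wta$-coefficient statement and then descending to overconvergent coefficients, using a two-out-of-three argument. Write $F\colon \Rep_{\Zp}(G_K)\to\Mod_{\wta^\dagger}^{\varphi,G_K}$ for $T\mapsto T\otimes_{\Zp}\wta^\dagger$ and $B\colon \Mod_{\wta^\dagger}^{\varphi,G_K}\to\Mod_{\wta}^{\varphi,G_K}$ for the base change $M^\dagger\mapsto M^\dagger\otimes_{\wta^\dagger}\wta$. Since equivalences satisfy two-out-of-three, it suffices to show that the composite $B\circ F\colon T\mapsto T\otimes_{\Zp}\wta$ is an equivalence and that $B$ is an equivalence.

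First I would establish that $B\circ F$ is an equivalence, with quasi-inverse $M\mapsto M^{\varphi=1}$. The essential input is that $C$ is algebraically closed (being the $p$-adic completion of $\overline K$), so its tilt $C^\flat$ is an algebraically closed perfect field of characteristic $p$. Consequently $x\mapsto x^p-x$ is surjective on $C^\flat$ (Artin--Schreier), and by successive approximation along the $p$-adic filtration together with the $p$-adic completeness of $\wta=W(C^\flat)$, the operator $\varphi-1$ is surjective on $\wta$ with kernel $\Zp$; more generally, for any $M\in\Mod_{\wta}^{\varphi,G_K}$ the $\Zp$-module $M^{\varphi=1}$ is finite free of rank $\rank_{\wta}M$ and the natural map $M^{\varphi=1}\otimes_{\Zp}\wta\to M$ is an isomorphism. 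Because the $G_K$-action commutes with $\varphi$ and fixes $\Zp\subset\wta$, it preserves $M^{\varphi=1}$ and acts $\Zp$-linearly there; continuity of this action follows from that of the $G_K$-action on $M$ and on $\wta$. This yields the quasi-inverse, hence the equivalence $B\circ F$; full faithfulness can alternatively be read off from the internal-hom computation $\Hom(M_1,M_2)=(M_1^\vee\otimes_{\wta}M_2)^{\varphi=1,\,G_K}$ together with $(\wta)^{\varphi=1}=\Zp$.

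It then remains to show that $B$ is an equivalence, equivalently that $F$ itself is an equivalence; this is the overconvergence step and is where the main difficulty lies. The functor $F$ lands in the correct category because $\varphi$ is an automorphism of $\wta^\dagger$ and $\wta^\dagger$ is $G_K$-stable. To run the argument above over $\wta^\dagger$ one needs two facts: (i) $(\wta^\dagger)^{\varphi=1}=\Zp$, immediate from $\Zp\subset\wta^\dagger\subset\wta$; and (ii) the triviality of finite free $\varphi$-modules over $\wta^\dagger$ with bijective Frobenius, i.e.\ that for $M^\dagger\in\Mod_{\wta^\dagger}^{\varphi,G_K}$ the lattice $M^{\dagger,\varphi=1}$ has full rank and generates $M^\dagger$. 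Here the naive successive-approximation proof of surjectivity of $\varphi-1$ breaks down, since $\wta^\dagger$ is not $p$-adically complete; controlling the radii of overconvergence while solving $\varphi(x)-x=y$ is the crux of the matter. I would obtain this from the overconvergence theory of $p$-adic representations in the perfect period-ring setting, namely the descent of \'etale $\varphi$-modules from $\wta$ to $\wta^\dagger$ (the analogue of Cherbonnier--Colmez overconvergence; cf.\ \cite{Ber02} and the slope theory of Kedlaya, which imposes no restriction on residue fields). Granting (i) and (ii), the same functor $M^\dagger\mapsto M^{\dagger,\varphi=1}$ furnishes a quasi-inverse to $F$, and the commuting triangle with $B\circ F$ forces $B$ to be an equivalence as well.
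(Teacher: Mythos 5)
Your proposal is correct and in essence takes the same route as the paper: the paper's entire proof is the remark that the statement is well-known because $C^\flat$ is an algebraically closed perfectoid field, with a citation to \cite[Thm.\ 8.5.3]{KL15} --- which is precisely the black box you invoke for the overconvergent descent from $\wta$ to $\wta^\dagger$, the only genuinely hard step. Your unpacking of the $\wta$-level equivalence via Artin--Schreier surjectivity on $C^\flat$, Lang-type triviality of \'etale $\varphi$-modules mod $p$, and successive approximation using $p$-adic completeness is a correct elaboration of what that citation contains, not a different argument.
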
 
\begin{proof}
This is well-known; essentially it is because $C^\flat$ is algebraically closed and is a perfectoid field, cf. e.g., \cite[Thm. 8.5.3]{KL15}.
\end{proof}  

\begin{prop}\label{prop46}
Recall that in Thm. \ref{323}, we have constructed a fully faithful functor 
$$(\ast): \MFnablawageq \to \Modgsnabla\otimes_\Zp \Qp.$$
Suppose $D \in \MFnablawageq$ maps to $(\gm, \varphi, N, \nabla)$ (up to isogeny), then there is a canonical $G_\bbkinfty$-equivariant isomorphism 
$$T_\gs(\gm)\otimes_{\Zp}\Qp \simeq V_\st(D)|_{G_\bbkinfty}$$
\end{prop}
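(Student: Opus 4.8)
The plan is to reduce the statement to the perfect residue field $\bbk$, where the analogous comparison between Breuil--Kisin modules and semi-stable Galois lattices is already available, and then to transport it back using the base-change functors of Thm.~\ref{323} together with the comparison $V_\st(D)\simeq\mathbb V_\st(D_\bbk)$ that is built into the proof of Thm.~\ref{thmCF}.

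First I would unwind $T_\gs$. Since $T_\gs=(5)\circ(1)$ and the functor $(1)$ is the base change $\gm\mapsto\wt\gm:=\gm\otimes_\gs\gs_\bbk$, we have tautologically $T_\gs(\gm)=T_{\gs_\bbk}(\wt\gm)$ as $G_{\bbkinfty}$-representations; equivalently, by Lem.~\ref{4211} and the $G_\bbk$-equivariant isomorphism $i_\varphi\colon W(C^\flat)\simeq W(\mathbb C^\flat)$, both functors compute $(\gm\otimes_\gs W(C^\flat))^{\varphi=1}$. Thus it suffices to show that $\wt\gm$ is, up to isogeny, the Breuil--Kisin module over $\gs_\bbk$ attached by the perfect-residue-field construction to $D_\bbk:=\bbk_0\otimes_{K_0}D\in\MFkpfgeq$ (this is the object denoted $D_\bbko$ in \eqref{eqbstkpfo}, and it is weakly admissible now that Thm.~\ref{thmCF} is established).

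To see this I would base change the identity $\gm\otimes_\gs\mathcal A=M(D)$ (the equivalence $\Theta$ of Thm.~\ref{323}) along $\mathcal A\hookrightarrow\mathcal A_\bbk$:
\[
\wt\gm\otimes_{\gs_\bbk}\mathcal A_\bbk=\gm\otimes_\gs\mathcal A_\bbk=M(D)\otimes_{\mathcal A}\mathcal A_\bbk.
\]
By the construction of $M(D)$ in Thm.~\ref{thmmod} as the $\mathcal A$-submodule $(\mathcal A[\ell_u,1/\lambda]\otimes_{K_0}D)\cap\mathbb M(D_\bbk)$ generating $\mathbb M(D_\bbk)$ over $\mathcal A_\bbk$, the right-hand side equals $\mathbb M(D_\bbk)$. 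Since the $\bbk$-analogue of $\Theta$ is an equivalence (\cite[Lem.~1.3.13]{Kis06}, cf.\ Thm.~\ref{323}), descent along $\gs_\bbk\hookrightarrow\mathcal A_\bbk$ identifies $\wt\gm$, up to isogeny, with the Breuil--Kisin module attached to $D_\bbk$. Consequently the perfect-residue-field comparison theorem (Kisin \cite{Kis06} in the crystalline case and its semi-stable analogue, valid for a general perfect residue field, cf.\ \cite{GLAMJ}) applies and gives
\[
T_{\gs_\bbk}(\wt\gm)\otimes_\Zp\Qp\ \simeq\ \mathbb V_\st(D_\bbk)\big|_{G_{\bbkinfty}}.
\]

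It then remains to restrict the $G_\bbk$-equivariant isomorphism $V_\st(D)\xrightarrow{\ \simeq\ }\mathbb V_\st(D_\bbk)$ of \eqref{eqbstkpfo} to the subgroup $G_{\bbkinfty}\subset G_\bbk$ and to chain the displayed isomorphisms, which yields $T_\gs(\gm)\otimes_\Zp\Qp\simeq V_\st(D)|_{G_{\bbkinfty}}$ as desired. The main obstacle I anticipate is precisely the compatibility step $M(D)\otimes_{\mathcal A}\mathcal A_\bbk=\mathbb M(D_\bbk)$ together with its descent to $\gs_\bbk$: one must check that the intersection defining $M(D)$ and the Kedlaya slope-$0$ descent to $\gs$ both commute with the flat base change $K_0\to\bbk_0$, and that the $\varphi$-, $N$-, $\nabla$- and $G_{\bbkinfty}$-structures (tracked through $i_\varphi$) are respected. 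Granting this, the remaining steps are a formal concatenation of known isomorphisms.
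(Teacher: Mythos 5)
Your proposal is correct and follows essentially the same route as the paper, whose entire proof is the one-line reduction $T_\gs(\gm)=T_{\gs_\bbk}(\gm\otimes_\gs\gs_\bbk)$ followed by a citation of \cite[Prop.~2.1.5]{Kis06}; your expanded version merely makes explicit the base-change compatibility $M(D)\otimes_{\mathcal A}\mathcal A_\bbk=\mathbb M(D_\bbk)$ and the use of \eqref{eqbstkpfo}, which the paper leaves implicit in the constructions of \S\ref{sec3} and Thm.~\ref{thmCF}. One small correction: \cite[Prop.~2.1.5]{Kis06} already treats the semi-stable case (the $N$-operator is part of Kisin's setup), so no separate ``semi-stable analogue'' via \cite{GLAMJ} is needed at that step.
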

\begin{proof}
$T_\gs(\gm) =T_{\gs_\bbk}(\gm\otimes_\gs \gs_\bbk)$, hence the propostion follows from \cite[Prop. 2.1.5]{Kis06}.
\end{proof}

\begin{prop}\label{p434}
Let  $D$ and $\gm$ be as in Prop. \ref{prop46}, namely, $\gm$ (which is defined up to isogeny) comes from a semi-stable representation $V:=V_\st(D)$. Prop. \ref{prop46} and Lem. \ref{4211} induces a $\varphi$-equivariant isomorphism 
\begin{equation}\label{eqp434}
\gm \otimes_\gs \wtb  \simeq V\otimes_\qp \wtb;
\end{equation}
and the $G_K$-action on the right hand side induces a $G_K$-action on the left hand side.
Then $\gm$ is fixed by $G_{\Kinfty}$ under this action.
\end{prop}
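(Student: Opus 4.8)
The plan is to build the comparison \eqref{eqp434} explicitly, to check that $G_{\Kinfty}$ acts $\gs$-linearly and $\varphi$-equivariantly on its left-hand side, and then to deduce that it fixes $\gm$ from the integral description \eqref{eqmmk} of the associated \'etale $\varphi$-module. First I would produce \eqref{eqp434}: Lem.~\ref{4211}(2) gives $T_\gs(\gm) \simeq (\gm \otimes_\gs W(C^\flat))^{\varphi=1}$ as $G_{\bbkinfty}$-modules, and Prop.~\ref{prop46} identifies this, after inverting $p$, with $V|_{G_{\bbkinfty}}$. Since $\gm \otimes_\gs \wtb$ is \'etale over $\wtb = W(C^\flat)[1/p]$ and $\wtb^{\varphi=1} = \Qp$, the descent $\gm \otimes_\gs \wtb \simeq (\gm \otimes_\gs \wtb)^{\varphi=1} \otimes_{\Qp} \wtb \simeq V \otimes_{\Qp} \wtb$ is exactly \eqref{eqp434}, and I transport the diagonal $G_K$-action from the right-hand side. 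It then suffices to show that for $g \in G_{\Kinfty}$ the transported action fixes $\gm \otimes 1$.

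Next I would verify that $G_{\Kinfty}$ fixes the subring $\gs \subset \wtb$. Under $\gs \hookrightarrow W(C^\flat)$ the variable $u$ maps to $[\upi]$ with $\upi = \{\pi_n\}_{n \geq 0}$; since $\Kinfty$ contains every $\pi_n$, the element $\upi \in C^\flat$ and hence its Teichm\"uller lift $u = [\upi]$ are $G_{\Kinfty}$-fixed. The coefficient ring $\oko \hookrightarrow W(\mathbf{k})$ is fixed by all of $G_K$, because $G_K$ fixes $k_K$, hence its radical closure $\mathbf{k}$ (a purely inseparable extension), hence $W(\mathbf{k})$ through Teichm\"uller expansions. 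Consequently $G_{\Kinfty}$ fixes $\gs$, so it acts $\gs$-linearly and $\wtb$-semilinearly on $\gm \otimes_\gs \wtb$; and because the $G_K$-action on $\wtb$ commutes with $\varphi$ while \eqref{eqp434} is $\varphi$-equivariant, the transported action commutes with $\varphi$ as well. Working integrally inside $T \otimes_{\Zp} W(\O_C^\flat)$, the identity \eqref{eqmmk} presents the \'etale $\varphi$-module as $M = (T \otimes_{\Zp} \O_{\wh{\E}^\ur})^{G_{\bbkinfty}}$, and since $\gm = M \cap \wt\gm$ under the functors of Construction~\ref{423c}, the submodule $\gm$ is already fixed by $G_{\bbkinfty}$.

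The crux, and the step I expect to be the main obstacle, is to upgrade this $G_{\bbkinfty}$-invariance to $G_{\Kinfty}$-invariance, that is, to control the residue-field direction separating the two groups. Here I would invoke the field-of-norms identifications \eqref{e424}: the passage from $k_K$ to its radical closure $\mathbf{k}$ is purely inseparable, so $G_{\Kinfty}$ and $G_{\bbkinfty}$ act on the period rings $\O_{\wh{\E}^\ur}$ (and on $W(\O_C^\flat)$) through one and the same field-of-norms Galois group $G_{\mathbf{E}_{\Kinfty}}$. In particular $(\O_{\wh{\E}^\ur})^{G_{\Kinfty}} = \OE = (\O_{\wh{\E}^\ur})^{G_{\bbkinfty}}$, and the $G_{\Kinfty}$- and $G_{\bbkinfty}$-actions on $T \otimes_{\Zp} \O_{\wh{\E}^\ur}$ have the same invariants; hence $M$, and with it $\gm \subset M$, is fixed by $G_{\Kinfty}$. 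The delicate point is precisely that the \'etale realization in Prop.~\ref{prop42} is only \emph{a priori} $G_{\bbkinfty}$-equivariant, so the whole difficulty lies in showing that passing from $\bbk$ back to $K$ does not enlarge the Galois group acting on the rings containing $\gm$; this is exactly the feature that is invisible in the perfect residue field case, where $G_{\Kinfty}$-invariance of $\gm$ is immediate from the analogue of \eqref{eqmmk}.
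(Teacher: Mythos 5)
Your first paragraph (the construction of \eqref{eqp434} from Lem.~\ref{4211} and Lem.~\ref{thmwfr}, and transporting the $G_K$-action) is fine and matches the paper. But the proof breaks at exactly the step you identify as the crux, in two places. First, the claim that $G_K$ (a fortiori $G_{\Kinfty}$) fixes $\oko \into W(\mathbf{k})$ ``through Teichm\"uller expansions'' is false in the imperfect residue field case: $G_K$ fixes $\mathbf{k}$ pointwise only \emph{as a subfield of the residue field} $\overline{k_K}$, and this does not fix any lift of $\mathbf{k}$ into $\O_C^\flat$. Such a lift is not canonical --- $\O_C^\flat$ is perfect, so every element has compatible $p$-power roots, a lift of $\bar t_i$ can be multiplied by any unit congruent to $1$, and Galois permutes the lifts. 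Indeed no $G_{\Kinfty}$-fixed lift can exist: if $x=(x^{(n)})\in \O_C^\flat$ is $G_{\Kinfty}$-invariant, then each $x^{(n)}\in C^{G_{\Kinfty}}=\wh{\Kinfty}$ by Ax--Sen--Tate, so the residue of $x$ lies in $\bigcap_n k_K^{p^n}$, whereas $\bar t_i\notin k_K^p$. Hence $G_{\Kinfty}$ does not fix $i_\varphi(\oko)\subset W(\O_C^\flat)$ pointwise (contrast the perfect residue case, where the Teichm\"uller lifts already lie in $K_0$ itself and are fixed because $G_K$ fixes $K$); with this, your assertions that $G_{\Kinfty}$ fixes $\gs$, acts on $\O_{\wh{\E}^\ur}$ over $\O_{\E}$, and satisfies $(\O_{\wh{\E}^\ur})^{G_{\Kinfty}}=\OE$ all collapse. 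Second, even granting that each $g\in G_{\Kinfty}$ acted on the period ring like some $h\in G_{\bbkinfty}$ via the field-of-norms identifications \eqref{e424}, the action on $T\otimes_{\Zp}\O_{\wh{\E}^\ur}$ is diagonal and $g$, $h$ need not agree on $T$; so $G_{\bbkinfty}$-invariance of $M=(T\otimes_{\Zp}\O_{\wh{\E}^\ur})^{G_{\bbkinfty}}$ simply does not transfer to $G_{\Kinfty}$. The telltale sign is that your argument never uses that $V$ is semi-stable: it would yield the conclusion for an arbitrary lattice whose restriction to $G_{\bbkinfty}$ descends to $\gs$, which cannot be right and would render most of \S\ref{sub43} superfluous.

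The paper's proof draws the $G_{\Kinfty}$-invariance from an entirely different source, and semi-stability is the engine. It uses the chain $V\otimes_{\Qp}\bst\simeq D\otimes_{\ko}\bst\simeq (D\otimes_{\ko}S[1/p])\otimes_{S[1/p]}\bst\simeq(\gm\otimes_{\varphi,\gs}S[1/p])\otimes_{S[1/p]}\bst$, the last step by Prop.~\ref{p325}. Here $D$ is $G_K$-fixed, and --- crucially --- $S$ sits inside $\acris$ through the \emph{first} tensor factor of $\oko\otimes_{\ZZ}W(\O_C^\flat)$, a copy of $\oko$ that is fixed by all of $G_K$ with no Teichm\"uller lift involved; this is why $S\subset(\bst)^{G_{\Kinfty}}$ and why $\varphi^\ast\gm$ is $G_{\Kinfty}$-fixed inside $V\otimes_{\Qp}\bdr$. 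The remaining work, which your proposal has no substitute for, is to connect this $\bdr$-picture to the $\wtb$-comparison \eqref{eqp434}: since $\wtb^\dagger$ is a field, the comparison descends to some $\wtb_{[r_n,+\infty]}$, and the $G_K$-equivariant embeddings $\iota_n:\wtb_{[r_n,+\infty]}\into\bdr^{\nabla+}\into\bdr$ recover \eqref{e432a}, transporting the invariance back to $\gm$. In short, the invariance lives in the crystalline comparison, not in the \'etale/field-of-norms picture, and the purely \'etale route you propose cannot see it.
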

\begin{proof}
We have the following  $\bst$-linear isomorphisms
\begin{eqnarray}
\label{4351} V\otimes_\qp \bst &\simeq & D\otimes_{\ko}\bst \\
 \label{4352} &\simeq & (D\otimes_\ko S[1/p]) \otimes_{S[1/p] } \bst \\
 \label{4353} &\simeq & (\gm\otimes_{\varphi, \gs}  S[1/p]) \otimes_{S[1/p] } \bst
\end{eqnarray} 
Here \eqref{4351} is $G_K$-equivariant with trivial $G_K$-action on $D$. In \eqref{4352}, $D\otimes_\ko S[1/p]$ is fixed by $G_\Kinfty$ since $S \subset (\bst)^{G_\Kinfty}$. Thus in \eqref{4353} -- which holds by Prop. \ref{p325} -- the subset $(\gm\otimes_{\varphi, \gs}  S[1/p])$ is also fixed by $G_\Kinfty$. 
Thus $\varphi^\ast\gm$ is fixed by $G_\Kinfty$ in the following base change:
\begin{equation} \label{e432a}
V\otimes_\qp \bdr \simeq \varphi^\ast\gm\otimes_\gs \bdr.
\end{equation} 

Lem. \ref{thmwfr}  and Eqn. \eqref{eqp434} implies that we have a  $\varphi$-equivariant isomorphism (we can use $\varphi^\ast \gm$ here because $\varphi$ is bijective on $\wtb$ and $\wtb^\dagger$)
\begin{equation*}
 V\otimes_\Qp \wtb^\dagger \simeq \varphi^\ast \gm\otimes_\gs \wtb^\dagger.
\end{equation*}
Since $\wtb^\dagger$ is a field, there exists some $n \gg 0$ such that
\begin{equation*}
  V\otimes_\Qp \wtb_{[r_n, +\infty]} \simeq \varphi^\ast \gm\otimes_\gs \wtb_{[r_n, +\infty]}.
\end{equation*}
Apply the $G_K$-equivariant base changes $\iota_n: \wtb_{[r_n, +\infty]} \into \bdr^{\nabla  +}$ and  $\bdr^{\nabla  +} \into \bdr$, we recover \eqref{e432a}; thus $\varphi^\ast \gm$ and hence $\gm$ is fixed by $G_\Kinfty$.
\end{proof}

\subsection{Breuil-Kisin  $ G_K$-modules}\label{sub43}
In this subsection, we study integral semi-stable representations of $G_K$. The following lemma will be repeatedly used.

\begin{lemma}\label{lemsubgen}
The two subgroups $G_\bbk$ and $G_{\Kinfty}$ generate $G_K$.
\end{lemma}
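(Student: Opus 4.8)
The plan is to argue through the Galois correspondence for the profinite group $G_K=\Gal(\overline{K}/K)$. Write $H$ for the closed subgroup of $G_K$ generated by $G_\bbk$ and $G_{\Kinfty}$; by the fundamental theorem of (infinite) Galois theory, $\overline{K}^{H}=\overline{K}^{G_\bbk}\cap\overline{K}^{G_{\Kinfty}}$, and $H=G_K$ is equivalent to $\overline{K}^{H}=K$. Since $G_{\Kinfty}=\Gal(\overline{K}/\Kinfty)$, we have $\overline{K}^{G_{\Kinfty}}=\Kinfty$. For the other factor I would use the setup of \S\ref{sec21}: under $i_\varphi\colon\overline{K}\hookrightarrow\overline{\bbk}$, an element $x\in\overline{K}$ is fixed by $G_\bbk=\Gal(\overline{\bbk}/\bbk)$ exactly when $i_\varphi(x)\in\overline{\bbk}^{G_\bbk}=\bbk$, so that $\overline{K}^{G_\bbk}=\overline{K}\cap\bbk$ (intersection taken inside $\overline{\bbk}$). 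As $\Kinfty\subset\overline{K}$, the lemma reduces to the purely field-theoretic claim
\[
\bbk\cap\Kinfty=K.
\]

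For this I would run a ramification-theoretic argument. Recall $\Kinfty=\bigcup_{n\ge 0}K(\pi_n)$, and each $K(\pi_n)/K$ is the Eisenstein extension cut out by $x^{p^n}-\pi$ (with $\pi$ a uniformizer of $K$), hence totally ramified of degree $p^n$, with residue field $k_K$ and value group $p^{-n}v(K^\times)$. On the other hand $\bbk$ is a complete discretely valued field having $\pi$ as a uniformizer, so $\bbk/K$ has ramification index $1$, i.e.\ value group equal to $v(K^\times)$. Setting $M_n:=\bbk\cap K(\pi_n)$, an intermediate field of $K(\pi_n)/K$, containment in the totally ramified $K(\pi_n)$ forces $f(M_n/K)=1$, while containment in $\bbk$ forces $v(M_n^\times)\subseteq v(\bbk^\times)=v(K^\times)$, i.e.\ $e(M_n/K)=1$. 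Since $K(\pi_n)/K$ is defectless ($ef=p^n=[K(\pi_n):K]$) and defectlessness passes to subextensions, we conclude $[M_n:K]=e(M_n/K)f(M_n/K)=1$, so $M_n=K$. Taking the union over $n$ gives $\bbk\cap\Kinfty=\bigcup_n M_n=K$, as wanted.

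The only inputs that require care are the two facts about $\bbk$ recorded in \S\ref{sec21}: that $\overline{\bbk}^{G_\bbk}=\bbk$ (used to identify $\overline{K}^{G_\bbk}=\overline{K}\cap\bbk$) and that $\pi$ is genuinely a uniformizer of $\bbk$ (which is what yields $e(\bbk/K)=1$). Granting these, the main obstacle is the bookkeeping with the \emph{defect}: because $k_K$ is imperfect one cannot assume $ef=[L:K]$ for an arbitrary finite extension, so I would take pains to deduce the defectlessness of each $M_n/K$ from that of the explicit Eisenstein extension $K(\pi_n)/K$, rather than asserting it outright. Beyond that the argument is just the standard ``unramified versus totally ramified'' dichotomy, which makes the two towers linearly disjoint over $K$.
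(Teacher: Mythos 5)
Your proposal is correct and takes essentially the same route as the paper: the paper's proof is a one-line citation of \cite[Prop.~3.8]{BT08} together with exactly your key reduction, namely that $\bbk \cap \Kinfty = K$ because $\Kinfty/K$ is totally ramified while $\pi$ remains a uniformizer of $\bbk$, and your write-up simply supplies the Galois-correspondence and ramification details behind that one line. One minor remark: your caution about the defect is harmless but unnecessary, since $\operatorname{char} K = 0$ makes every finite extension of $K$ separable, so $[L:K] = e(L/K)\,f(L/K)$ holds automatically for finite extensions of a complete discretely valued field regardless of the imperfection of $k_K$.
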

\begin{proof}
This is noted in the proof of \cite[Prop. 3.8]{BT08}; indeed $\bbk \cap \Kinfty=K$ as $\Kinfty$ is totally ramified.
\end{proof}

Now we introduce the Breuil-Kisin  $ G_K$-modules.
Let $\mathfrak{m}_{\O_C^\flat}$ be the maximal ideal of $\O_C^\flat$, and let $W(\mathfrak{m}_{\O_C^\flat})$ be the ideal of Witt vectors.

\begin{defn}\label{defwr}
Let $\textnormal{Mod}_{\gs, W(\O_C^\flat)}^{\varphi, G_K}$
be the category consisting of data which we call the (effective) \emph{Breuil-Kisin  $ G_K$-modules}:
\begin{enumerate}
\item $(\mathfrak{M}, \varphi_\mathfrak{M})\in \Mod_{\gs}^{\varphi}$;
\item  $G_K$ is a continuous $W(\O_C^\flat)$-semi-linear   $G_K$-action on $\hM:= W(\O_C^\flat)
\otimes_{ \gs} \mathfrak{M}$;
\item $G_K$ commutes  with $\varphi_{\widehat{\mathfrak{M}}}$ on $\widehat{\mathfrak{M}}$;
\item $\gm \subset \hM^{G_\Kinfty}$ via the embedding $\gm \hookrightarrow \hM$;
\item  $\gm/u\gm \subset (\hM/W(\mathfrak{m}_{\O_C^\flat})\hM)^{G_K}$ via the embedding $\gm/u\gm \hookrightarrow \hM/W(\mathfrak{m}_{\O_C^\flat})\hM$.
\end{enumerate}
  \end{defn}

\begin{theorem}\label{thmgao}
Starting from $\bbk$, let $\textnormal{Mod}_{\gs_\bbk, W(\O_\mathbb{C}^\flat)}^{\varphi, G_\bbk}$
be the category    defined similarly as Def. \ref{defwr}: namely, one changes the notations $$``\gs, \quad \O_{C}^\flat, \quad G_K, \quad G_\Kinfty"$$ there to $$\gs_\bbk, \quad \O_\mathbb{C}^\flat, \quad G_\bbk, \quad G_\bbkinfty.$$
 Then the functor
$$(\gm, \hM) \mapsto (\gm \otimes W(\mathbb{C}^\flat))^{\varphi=1}$$
induces an equivalence of categories
$$\textnormal{Mod}_{\gs_\bbk, W(\O_\mathbb{C}^\flat)}^{\varphi, G_\bbk} \xrightarrow{\simeq }  
\Rep_{\Zp}^{\st,\geq 0}(G_\bbk).$$
\end{theorem}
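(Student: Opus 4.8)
The plan is to reduce the statement to the classification of lattices in semi-stable representations in the classical (perfect residue field) setting, which is the content of \cite{Gaolp} together with Kisin's \cite{Kis06} and Liu's integral theory; this applies verbatim because $\bbk$ is a mixed characteristic CDVF whose residue field $\mathbf{k}$ is perfect. First I would make the functor transparent. For a Breuil-Kisin $G_\bbk$-module $(\gm, \hM)$ one has $\gm \otimes_{\gs_\bbk} W(\mathbb{C}^\flat) = \hM \otimes_{W(\O_\mathbb{C}^\flat)} W(\mathbb{C}^\flat)$, so the $W(\O_\mathbb{C}^\flat)$-semi-linear $G_\bbk$-action on $\hM$ (Def. \ref{defwr}(2),(3)) extends $W(\mathbb{C}^\flat)$-semi-linearly and commutes with $\varphi$; hence it preserves $T := (\gm \otimes_{\gs_\bbk} W(\mathbb{C}^\flat))^{\varphi=1}$ and makes it a $\Zp[G_\bbk]$-module. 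By Lem. \ref{4211}(1) the underlying $\Zp[G_{\bbkinfty}]$-module is exactly $T_{\gs_\bbk}(\gm)$, so condition \ref{defwr}(4) guarantees that this $G_\bbk$-action genuinely extends the $G_{\bbkinfty}$-action attached to $\gm$ by the fully faithful functor $T_{\gs_\bbk}$.

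The substance is then twofold. For essential surjectivity and well-definedness I would start from $T \in \Rep_{\Zp}^{\st,\geq 0}(G_\bbk)$, restrict to $G_{\bbkinfty}$, and apply Kisin's integral theory \cite{Kis06} to produce an effective module $\gm \in \Mod_{\gs_\bbk}^\varphi$ with $T_{\gs_\bbk}(\gm) \cong T|_{G_{\bbkinfty}}$; then set $\hM = W(\O_\mathbb{C}^\flat) \otimes_{\gs_\bbk} \gm$. One must show that the $G_\bbk$-action on $T \otimes_{\Zp} W(\mathbb{C}^\flat)$ preserves the lattice $\hM$, and verify conditions \ref{defwr}(4),(5). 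The rational shadow is already available: over $\bbk$, the integral form of the $G_{\bbkinfty}$-invariance established in Prop. \ref{p434} is precisely condition (4), while condition (5) reflects that $\gm/u\gm$ recovers $D=D_\st(V)$ (as in Thm. \ref{thmmod}), which sits inside a space of $G_\bbk$-invariants through its crystalline structure, so that $G_\bbk$ acts trivially on $\gm/u\gm$ after reduction mod $W(\mathfrak{m}_{\O_\mathbb{C}^\flat})$. Conversely, given a Breuil-Kisin $G_\bbk$-module, the associated $T$ is semi-stable and effective because the $G_\bbk$-action on $\hM$ is exactly a $(\varphi,\hat{G})$-descent datum in the sense of Liu, and Liu's theorem identifies such data with lattices in semi-stable representations of non-negative Hodge-Tate weight; this is what \cite{Gaolp} packages into the category $\textnormal{Mod}_{\gs_\bbk, W(\O_\mathbb{C}^\flat)}^{\varphi, G_\bbk}$, and I would simply match Def. \ref{defwr} (for $\bbk$) with the category used there.

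Full faithfulness I would deduce from the fully faithfulness of $T_{\gs_\bbk}$ (Prop. \ref{prop43}): a $G_\bbk$-equivariant map $T \to T'$ restricts to a $G_{\bbkinfty}$-equivariant map, hence comes from a unique morphism $\gm \to \gm'$ in $\Mod_{\gs_\bbk}^\varphi$; base-changing to $\hM \to \hM'$ and using that the $G_\bbk$-action on each $\hM$ is determined by the $G_\bbk$-action on $T$ (conditions \ref{defwr}(2)--(4)) shows this morphism is automatically $G_\bbk$-compatible, while conversely a morphism of Breuil-Kisin $G_\bbk$-modules induces a $G_\bbk$-equivariant map of representations. The main obstacle is the integral boundedness statement in the middle paragraph: that the $G_\bbk$-action preserves the $W(\O_\mathbb{C}^\flat)$-lattice $\hM$ rather than merely the ambient $W(\mathbb{C}^\flat)$-space, and that its reduction mod $W(\mathfrak{m}_{\O_\mathbb{C}^\flat})$ fixes $\gm/u\gm$. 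Everything else is a formal manipulation of the commuting $\varphi$-, $G_\bbk$- and filtration structures once the classical integral theory is invoked.
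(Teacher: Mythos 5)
Your proposal is correct and takes essentially the same route as the paper, whose entire proof is the single line ``This follows from the main theorem of \cite{Gaolp}'': since $\bbk$ has perfect residue field, that theorem applies verbatim, and your sketch (functor via $T_{\gs_\bbk}$, full faithfulness from Prop.~\ref{prop43}, lattice-stability as the one substantive point) is precisely the skeleton of what \cite{Gaolp} proves. The only caution is your phrase that the $G_\bbk$-action on $\hM$ ``is exactly a $(\varphi,\hat{G})$-descent datum in the sense of Liu'': conditions (4) and (5) of Def.~\ref{defwr} are \emph{a priori} weaker than Liu's $(\varphi,\hat{G})$-module axioms, and showing that this weaker package nevertheless classifies integral semi-stable representations is the main content of \cite{Gaolp}, not a formal matching of definitions --- but since you defer to that reference at exactly this point, the argument stands.
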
  
  \begin{proof}
  This follows from the main theorem of \cite{Gaolp}.
  \end{proof}
  
The following is our main theorem on integral $p$-adic Hodge theory in the imperfect residue field case.  
  
  \begin{theorem}\label{thm411}
  There is a fully faithful functor 
  \begin{equation*}  \Rep_{\Zp}^{\st,\geq 0}(G_K)
\xrightarrow{ }  
\Mod_{\gs, W(\O_C^\flat)}^{\varphi, G_K},
\end{equation*}
such that if $T$ maps to  $(\gm, \hM)$, then there is a $G_K$-equivariant isomorphism 
$$T \simeq   (\hM \otimes_{W(\O_{C}^\flat)} W({C}^\flat))^{\varphi=1}.$$
  \end{theorem}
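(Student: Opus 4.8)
The plan is to build the functor in two stages: first produce the Breuil-Kisin module $\gm$ together with an ambient $G_K$-action on $\hM$, and then verify the five conditions of Def.~\ref{defwr} by exploiting the fact (Lem.~\ref{lemsubgen}) that $G_\bbk$ and $G_\Kinfty$ generate $G_K$. First I would construct $\gm$. Given $T \in \Rep_{\Zp}^{\st,\geq 0}(G_K)$, set $V = T[1/p]$ and $D = D_\st(V) \in \MFnablawageq$ by Thm.~\ref{thmCF}. The functor labelled $(\ast)$ in Thm.~\ref{323} attaches to $D$ an object of $\Modgsnabla\otimes_\Zp\Qp$, whose underlying \'etale structure gives $\gm_0\otimes_\gs\E$ with $V|_{G_\bbkinfty} \simeq T_\gs(\gm_0)[1/p]$ by Prop.~\ref{prop46}. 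The $G_\bbkinfty$-stable lattice $T|_{G_\bbkinfty} \subset V|_{G_\bbkinfty}$ then selects, via the bijection of Lem.~\ref{45}, a unique $\varphi$-stable $\gs$-lattice $\gm \in \Mod_\gs^\varphi$ with $T_\gs(\gm) = T|_{G_\bbkinfty}$. This $\gm$, which has bounded $E(u)$-height, is the $\gs$-part of the sought object, and functoriality in $T$ is immediate from the functoriality of each step.

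Next I would put a $G_K$-action on $\hM := W(\O_C^\flat)\otimes_\gs\gm$. Since $E([\upi])$ is a unit in $W(C^\flat)$, the module $\gm\otimes_\gs W(C^\flat)$ is \'etale over $W(C^\flat)$; as $C^\flat$ is algebraically closed, Lem.~\ref{4211}(2) and the formalism of Lem.~\ref{thmwfr} give a canonical identification $\gm\otimes_\gs W(C^\flat) \simeq T_\gs(\gm)\otimes_\Zp W(C^\flat) = T\otimes_\Zp W(C^\flat)$. I transport the diagonal $G_K$-action from the right-hand side; it commutes with $\varphi$ because the $G_K$- and $\varphi$-actions on $W(C^\flat)$ commute. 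Taking $\varphi$-invariants recovers $T \simeq (\hM\otimes_{W(\O_C^\flat)}W(C^\flat))^{\varphi=1}$ as $G_K$-modules, so both the displayed isomorphism and condition (3) hold by construction.

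The crux is integrality: I must show this $G_K$-action preserves the lattice $\hM = \gm\otimes_\gs W(\O_C^\flat) \subset \gm\otimes_\gs W(C^\flat)$, together with conditions (4) and (5). Here I would split $G_K$ using Lem.~\ref{lemsubgen}. For $G_\Kinfty$: Prop.~\ref{p434} shows, over $\wtb$ and hence (by compatibility of the two isomorphisms, the one over $W(C^\flat)$ being the lattice version of the one over $\wtb$) over $\gm\otimes_\gs W(C^\flat)$, that $\gm$ is fixed by $G_\Kinfty$, which is exactly condition (4); since $G_\Kinfty$ fixes $[\upi]$ and acts on $W(\O_C^\flat)$, it therefore preserves $\hM$. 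For $G_\bbk$: the restriction $V|_{G_\bbk}$ is semi-stable and $\gm\otimes_\gs\gs_\bbk$ is its associated Breuil-Kisin $G_\bbk$-module (both have the same $G_\bbkinfty$-module $T|_{G_\bbkinfty}$ as $\varphi$-invariants), so Thm.~\ref{thmgao} of \cite{Gaolp}, together with $i_\varphi\colon W(\O_C^\flat)\simeq W(\O_\mathbb{C}^\flat)$ and $\hM = W(\O_\mathbb{C}^\flat)\otimes_{\gs_\bbk}(\gm\otimes_\gs\gs_\bbk)$, furnishes a $G_\bbk$-action preserving $\hM$ and satisfying the analogues of conditions (2)--(5); by uniqueness of the identification on $\varphi$-invariants it agrees with the transported action. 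Since the stabilizer of $\hM$ in $G_K$ (respectively the pointwise stabilizer of the image of $\gm/u\gm$ in $\hM/W(\mathfrak{m}_{\O_C^\flat})\hM$) is a subgroup containing both $G_\bbk$ and $G_\Kinfty$, it equals $G_K$; this yields condition (2), with continuity routine, and condition (5).

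Finally, full faithfulness. Faithfulness holds because the composite $T\mapsto\gm\mapsto T_\gs(\gm)=T|_{G_\bbkinfty}$ is faithful, by Prop.~\ref{prop43} and faithfulness of restriction. For fullness, a morphism $(\gm,\hM)\to(\gm',\hM')$ induces, after $\otimes_{W(\O_C^\flat)} W(C^\flat)$ and $\varphi$-invariants, a $G_K$-equivariant map $f\colon T\to T'$; the image of $f$ under the functor has $\gs$-part inducing the same $G_\bbkinfty$-equivariant map $T_\gs(\gm)\to T_\gs(\gm')$, hence by faithfulness of $T_\gs$ equals the original $\gs$-part, and the $\hM$-parts then agree since they are determined by base change and the common underlying map. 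I expect the main obstacle to be this integrality-and-compatibility step, namely that the transported action preserves $\hM$ and satisfies (4)--(5); the decisive idea is to decompose $G_K$ into $G_\Kinfty$, handled by Prop.~\ref{p434}, and $G_\bbk$, handled by the perfect residue field result Thm.~\ref{thmgao}.
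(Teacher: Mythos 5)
Your proposal is correct and takes essentially the same route as the paper's own proof: you construct $\gm$ from Prop.~\ref{prop46} and Lem.~\ref{45}, transport the $G_K$-action through the identification $\gm\otimes_\gs W(C^\flat)\simeq T\otimes_{\Zp}W(C^\flat)$ of Lem.~\ref{4211} and Lem.~\ref{thmwfr}, verify stability of $\hM$ and conditions (4)--(5) separately on $G_{\Kinfty}$ (via Prop.~\ref{p434}) and on $G_\bbk$ (via Thm.~\ref{thmgao} and $i_\varphi$), glue with Lem.~\ref{lemsubgen}, and deduce full faithfulness from $T_\gs$ and Lem.~\ref{thmwfr}, exactly as the paper does. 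The only cosmetic difference is that you confirm the Thm.~\ref{thmgao} action agrees with the transported one by uniqueness on $\varphi$-invariants, where the paper directly records the $G_\bbk$-stability of $\gm_\bbk\otimes_{\gs_\bbk}W(\O_{\mathbb{C}}^\flat)$ inside $T\otimes_{\Zp}W(\mathbb{C}^\flat)$.
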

  \begin{proof}
  Given $T\in \Rep_{\Zp}^{\st,\geq 0}(G_K)$, Prop. \ref{prop46} and Lem. \ref{45} imply that there is a unique $\gm \in \Mod_\gs^\varphi$ corresponding to $T|_{G_{\bbkinfty}}$. The assignment $T \mapsto \gm$ is functorial because $T_\gs$ is fully faithful.
  
Now we want to construct a natural $G_K$-action on $\gm \otimes_\gs W(\O_C^\flat)$. 
First, consider $T$ as an object in $\Rep_{\Zp}^{\st,\geq 0}(G_\bbk)$; by the construction in  \S \ref{423c} and Thm. \ref{thmgao}, 
$\gm_\bbk:=\gm \otimes_\gs \gs_\bbk$ is the corresponding object in $\Mod_{\gs_\bbk}^\varphi$ and 
\begin{equation} \label{equibbk}
\gm_\bbk  \otimes_{\gs_\bbk} W(\O_\mathbb{C}^\flat) \into T \otimes_\Zp W(\mathbb{C}^\flat) \text{ is } G_\bbk\text{-stable}.
\end{equation} 
  Consider the image of  the following $\varphi$-equivariant inclusions (where the second isomorphism follows from Lem. \ref{4211} and Lem. \ref{thmwfr}):
\begin{equation}\label{eqmcf}
\gm \otimes_\gs W(\O_C^\flat) \subset \gm\otimes_\gs W(C^\flat) \simeq T \otimes_\Zp W(C^\flat).
\end{equation} 
Using \eqref{equibbk} and the $G_\bbk$-equivariant isomorphism $W(\O_C^\flat) \simeq W(\O_\mathbb{C}^\flat)$  induced by $i_\varphi$ in \S \ref{sec21}, we see that the image of \eqref{eqmcf} is $G_{\bbk}$-stable; 
it is also $G_{\Kinfty}$-stable   because Prop. \ref{p434} implies that $G_{\Kinfty}$ acts trivially on $\gm$. 
Thus the image  is $G_K$-stable by Lem. \ref{lemsubgen}, and this induces the desired $G_K$-action on $\gm \otimes W(\O_C^\flat)$.  The construction of $G_K$-action is functorial with respect to $T$ by Lem. \ref{thmwfr}.
To show that $\gm/u\gm$ is fixed by $G_K$,  simply  consider   $G_{\bbk}$- and   $G_{\Kinfty}$-actions again, and use Lem. \ref{lemsubgen}.

The paragraph above gives the desired functor. 
It is faithful because $T_\gs$ is fully faithful.
To show fullness, let $T_1, T_2 \in \Rep_{\Zp}^{\st,\geq 0}(G_K) $, and let $\hm_1, \hm_2 \in \textnormal{Mod}_{\gs, W(\O_C^\flat)}^{\varphi, G_K}$ be the corresponding objects. A morphism $\hm_1 \to \hm_2$ induces a morphism $$\hm_1 \otimes_{W(\O_C^\flat)} W(C^\flat) \to \hm_2 \otimes_{W(\O_C^\flat)} W(C^\flat)$$ in $\textnormal{Mod}_{\wta}^{\varphi, G_K}$, and hence induces the desired morphism $T_1 \to T_2$ by Lem. \ref{thmwfr}. 
  \end{proof}
  
  \begin{rem} \label{rem437}
  \begin{enumerate}
  \item We do not know if the functor in Thm. \ref{thm411} is essentially surjective. Indeed, take an object $(\gm, \hM) \in \Mod_{\gs, W(\O_C^\flat)}^{\varphi, G_K}$, and let $T= (\hM \otimes_{W(\O_{C}^\flat)} W({C}^\flat))^{\varphi=1}.$ Then base change along $\gs \to \gs_\bbk$ (cf. \eqref{equibbk}) gives an object in $\textnormal{Mod}_{\gs_\bbk, W(\O_\mathbb{C}^\flat)}^{\varphi, G_\bbk}$, hence Thm. \ref{thmgao} implies that $T|_{G_\bbk}$ is semi-stable. Thus by Thm. \ref{thmmorita}(\ref{item4morita}), $T$ is \emph{potentially} semi-stable. However, we do not know if $T$ is semi-stable.
  
  \item  Let   $T\in \Rep_{\Zp}^{\st,\geq 0}(G_K)$, and let $(\mathfrak{M}, \varphi_\mathfrak{M}, G_K)$ be the corresponding Breuil-Kisin  $ G_K$-module. It might be tempting to add some connection operator $\nabla$, say   on $\gm/u\gm$ or just on $\gm/u\gm\otimes_\Zp \Qp$. The later map
$$ \nabla: \gm/u\gm\otimes_\Zp \Qp \to (\gm/u\gm \otimes_\Zp \Qp)\otimes_\ko \wh{\Omega}_\ko $$  
  certainly   exists as it corresponds to the connection on the Fontaine module of $T[1/p]$; but then it loses integrality information. 
 Furthermore, it is not clear if the above map induces
  $$\nabla: \gm/u\gm \to \gm/u\gm\otimes_\oko \wh{\Omega}_\oko.$$ 
  \end{enumerate} 
\end{rem}

\section{Modules over $\gs$ and $S$}\label{sec5}
In this section, we review some results about $\varphi$-modules over $\gs$ and $S$. Our exposition here is more general than what is needed in \S \ref{secpdiv}; we use this opportunity to  discuss various related results in the literature.

\subsection{On $\varphi$-modules over $\gs$ and $S$}
Recall in Notation \ref{n324}, we defined $S$. Explicitly, 
$$ S =\{ x=\sum_{i \geq 0} a_i \frac{E(u)^i}{i!} \mid  a_i \in \O_{K_0}[u], \lim_{i \to \infty} v_p(a_i)=+\infty  \} \subset K_0[[u]].$$
  Let $\Fil^r S$ be the ideal topologically generated by $(\frac{E(u)^i}{i!}, i \geq r)$. 
  One can check $S/\Fil^1 S \simeq \O_K$ via $u \mapsto \pi$. 
  Let $I_u \subset S$ be the kernel of the $\O_{K_0}$-linear map $f_0: S \to \O_{K_0}$ where $u \mapsto 0$, then $S/I_u \simeq \O_{K_0}$. 
  Let $\varphi$ be the Frobenius operator on $S$ extending $\varphi$ on $\O_{K_0}$ such that $\varphi(u)=u^p$. 
  For $1 \leq r \leq p-1$, one can check $\varphi(\Fil^r S) \subset p^rS$, and hence one can define 
$$\varphi_r=\frac{\varphi}{p^r}: \Fil^r S \to S.$$

\begin{defn}
Suppose $1 \leq r \leq p-1$.
Let $\Mod_S^{\varphi, r}$ be the category of the following data:
\begin{enumerate}
\item $M$ is a finite free $S$-module;
\item $\Fil^r M \subset M$ is a sub-$S$-module which contains $\Fil^r S \cdot M$;
\item $\varphi_r:\Fil^r M \to M$ is a $\varphi_S$-semi-linear map such that the linearization map $\varphi^\ast \Fil^r M \xrightarrow{1\otimes \varphi_r} M$ is surjective.
\end{enumerate}
\end{defn}

\begin{construction}\label{cons52}
Let $\Mod_\gs^{\varphi, r}$ be the sub-category of $\Mod_\gs^\varphi$ consisting of objects of $E(u)$-height $\leq r$.
There is a natural functor 
$$\cm_\gs^r: \Mod_\gs^{\varphi, r} \to \Mod_S^{\varphi, r}$$
 defined as follows. Given $(\gm, \varphi) \in \Mod_\gs^{\varphi, r}$, let 
 $$M=\cm_\gs^r(\gm):=S\otimes_{\varphi, \gs} \gm,$$
  let 
$$\Fil^r M= \{ m \in M, (1\otimes \varphi)(m) \in \Fil^rS \otimes_\gs \gm \},$$
and let $\varphi_r: \Fil^r M \to M$ be the composite:
$$\Fil^r M \xrightarrow{1\otimes \varphi}   \Fil^rS \otimes_\gs \gm \xrightarrow{\varphi_r\otimes 1} S\otimes_{\varphi, \gs} \gm=M. $$
\end{construction}

The following theorem and remark collect various results and arguments in \cite{CL09, Kim15, Gao17}.

\begin{theorem}\label{thm53}
Suppose $1\leq r \leq p-1$.
\begin{enumerate}
\item When $p>2$ and $1\leq r\leq p-2$, the functor $\cm_\gs^r$ is an equivalence  of categories.
\item For any $p$ and $1\leq r \leq p-1$, $\cm_\gs^r$ is essentially surjective.
\item For any $p$ and $1\leq r \leq p-1$, $\cm_\gs^r$ induces an equivalence of isogeny categories 
$$\Mod_\gs^{\varphi, r}\otimes_\Zp \Qp \to \Mod_S^{\varphi, r}\otimes_\Zp \Qp.$$
\end{enumerate}
\end{theorem}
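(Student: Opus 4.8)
The plan is to deduce the three statements from the existing literature, in each case checking that the cited arguments are insensitive to whether $k_K$ is perfect. The point is that $\cm_\gs^r$ involves only the $\varphi$- and filtration structures: the operators $N$ and $\nabla$, which carry the genuinely new content of the imperfect residue field case, play no role in the definitions of $\Mod_\gs^{\varphi,r}$ and $\Mod_S^{\varphi,r}$. Thus the entire comparison takes place within the classical Breuil--Kisin formalism, and the relevant references (\cite{CL09}, who already work over a base equipped with a $p$-basis, together with \cite{Kim15} and \cite{Gao17}) treat exactly this setting.

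The common technical core underlying all three parts is the construction of a candidate quasi-inverse to $\cm_\gs^r$ (see Construction \ref{cons52}). Given $M\in\Mod_S^{\varphi,r}$, I would recover a $\varphi$-stable $\gs$-lattice $\gm\subset M$ out of the surjectivity of $1\otimes\varphi_r: \varphi^*\Fil^r M\to M$, the key ring-theoretic input being the estimate $\varphi(\Fil^r S)\subset p^r S$ valid for $1\le r\le p-1$, which controls the divided-power denominators $\frac{E(u)^i}{i!}$ when passing between $S$ and $\gs$. This is precisely the recipe of Caruso--Liu \cite{CL09} and \cite{Gao17}, and nothing in it uses perfectness of the residue field, since $\O_{K_0}$ is a complete regular local ring carrying a Frobenius lift just as in the perfect case.

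For part (1), I would verify that, under the hypotheses $p>2$ and $r\le p-2$, the two composites of $\cm_\gs^r$ with this quasi-inverse are naturally isomorphic to the identity functors; the sharper range $r\le p-2$ (rather than $r\le p-1$) is exactly what forces the divided-power corrections to land back in the integral lattice, so I would cite \cite{CL09} directly. For part (2), I would observe that the lattice $\gm\subset M$ produced above is an object of $\Mod_\gs^{\varphi,r}$ with $\cm_\gs^r(\gm)\cong M$ regardless of whether the full equivalence holds: one only needs that $\gm$ is finite free of the correct rank and of $E(u)$-height $\le r$, and the construction yields this for all $p$ and all $1\le r\le p-1$. For part (3), inverting $p$ removes the integral obstructions that appear at the boundary $r=p-1$ and at $p=2$; combined with part (2) and a faithfulness statement after $\otimes_\Zp\Qp$ (following \cite{CL09, Gao17}, and compatible with Thm. \ref{323}), this yields the isogeny equivalence.

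The main obstacle I expect is not conceptual but a matter of careful bookkeeping: confirming that the denominator estimates and the finite-freeness and rank computations in the cited arguments are genuinely independent of the residue field, i.e. that replacing a perfect Cohen ring by $\O_{K_0}$ (a Cohen ring over a field admitting a finite $p$-basis) leaves the $\varphi$-module manipulations unchanged. Since $E(u)$, the filtration $\Fil^r S$, and the divided powers are literally the same, and since the Frobenius acts on $\O_{K_0}$, I expect this verification to go through verbatim; the phenomena specific to the imperfect case are absent from this particular comparison.
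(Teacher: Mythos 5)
Your overall strategy --- reduce to the literature and check that nothing in the $\varphi$-module manipulations depends on perfectness of the residue field --- is exactly the paper's, and for part (1) your treatment matches the paper verbatim: it cites \cite[Thm.~2.2.1]{CL09} and notes the proof is insensitive to the residue field. The genuine gap is in parts (2) and (3), at the boundary cases $r=p-1$ and especially $p=2$. You assert that the lattice construction ``yields this for all $p$ and all $1\le r\le p-1$,'' attributing the recipe to \cite{CL09} and \cite{Gao17}; but that assertion is precisely the hard point of the theorem, and neither reference delivers it. The Caruso--Liu-style iteration that produces a $\varphi$-stable $\gs$-lattice inside $M \in \Mod_S^{\varphi,r}$ converges only in the range of part (1); at $r=p-1$ (so in particular $p=2$, $r=1$) the divided-power denominators are no longer tamed by the naive estimate $\varphi(\Fil^r S)\subset p^r S$, and the adaptation in \cite{Gao17} --- as the paper itself stresses in Rem.~\ref{rembtwrong}(3) --- works with a subring $\Sigma \subset S$ that makes the iteration \emph{harder} to converge, so \cite[Thm.~2.5.6]{Gao17} obtains essential surjectivity only after restricting to unipotent subcategories. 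The paper's proof instead rests on the more refined convergence analysis of \cite[Prop.~6.6]{Kim15} (stated there for $r=1$ but valid for general $r$; it is relative, hence applies to $\ok$ by Lem.~\ref{lemok}), and that refined analysis is the actual mathematical content of parts (2) and (3). Your proposal replaces it with the claim to be proved.

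A secondary caveat concerns part (3): you propose to combine part (2) with ``a faithfulness statement after $\otimes_{\Zp}\Qp$ following \cite{CL09, Gao17},'' but note that integral full faithfulness of $\cm_\gs^1$ is \emph{open} when $p=2$ --- the claimed proof in \cite[Prop.~6.8]{BT08} has a gap, cf.\ Rem.~\ref{rembtwrong}(1) --- so the isogeny equivalence cannot be quoted casually as a formal consequence of the integral theory; in the paper it, too, is extracted from \cite[Prop.~6.6]{Kim15}. To repair your write-up, route parts (2) and (3) through Kim's argument explicitly, checking (as the paper does) that the only subtle case is $p=2$, $r=1$, and that the statement for general $r$ follows from the same proof.
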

\begin{proof} 
Item (1) is proved in \cite[Thm. 2.2.1]{CL09}, under the assumption that $K$ has perfect residue field; but the  proof is still valid for general $K$.
Items (2) and (3) are proved in \cite[Prop. 6.6]{Kim15}, which even works in the \emph{relative} case (cf. Rem. \ref{r525}). The proof in \cite[Prop. 6.6]{Kim15} uses similar ideas as in \cite[Thm. 2.2.1]{CL09}, but carries out more refined analysis. Actually, the statement in  \cite[Prop. 6.6]{Kim15} only deals with the case when $r=1$, but the proof works for  general $r$ as stated here: indeed, the only subtle case (in \cite[Prop. 6.6]{Kim15} and in here) is when $p=2$ (and hence $r=1$ then).
\end{proof}

\begin{rem} \label{rembtwrong}
In this remark, we first point out a gap in \cite{BT08} related with Thm. \ref{thm53} above. 
We then discuss   some related results in \cite{Gao17}: these discussions are not  used in the current paper, but we hope it serves to clarify the relation between  various results.
\begin{enumerate}

\item  In \cite[Prop. 6.8]{BT08}, it is claimed that $\cm_\gs^1$ is fully faithful (including $p=2$); unfortunately there is a gap in the argument. 
Using notations in \emph{loc. cit.}, the matrix $\Phi_1$(similarly $\Phi_2$) is defined for the \emph{linearization} of the Frobenius operator on $\gm_1$, hence instead of the   claim that 
$$``\Phi_1 \in \GL_{r_1}(\gs[E(u)^{-1}]) \text{ and  } E(u)\cdot \Phi_1^{-1} \in \mathrm{M}_{r_1}(\gs)",$$ 
we only have
$$\Phi_1 \in \GL_{r_1}(\gs[\varphi(E(u))^{-1}]) \text{ and  } \varphi(E(u))\cdot \Phi_1^{-1} \in \mathrm{M}_{r_1}(\gs).$$ 
Thus \cite[Lem. 6.9]{BT08} becomes irrelevant.
 Furthermore, the ``modified" statement   of \emph{loc. cit.} :
 \begin{itemize}
 \item   `` if $f\in S$ and $\varphi(E(u))^N \cdot f \in \gs+p^NS$ for all $N \geq 0$, then $f\in \gs$"
 \end{itemize}
  is false; e.g., $f=\frac{E(u)^p}{p}$ is a  counter-example.
 Indeed, we  \emph{do not know}  if $\cm_\gs^1$ is fully faithful when $p=2$; see also next item.

 \item As part of \cite[Thm. 2.5.6]{Gao17}, it is shown that 
\begin{itemize}
\item (Statement A): $\cm_\gs^{p-1}$ (including $p=2$) is fully faithful when \emph{restricted to the unipotent sub-categories}.
\end{itemize}
For the notion of \emph{unipotency}, we refer the reader to \cite[Def. 2.1.1, Def. 2.2.2]{Gao17}, and also Part 0 in the proof of  \cite[Thm. 2.5.6]{Gao17}.
 In fact, a \emph{stronger} result is proved there:  
\begin{itemize}
\item (Statement B): the ``mod $p$ functor" of $\cm_\gs^{p-1}$, denoted as 
$$``\cm_{\gs_1}: \Mod_{\gs_1}^\varphi \to \Mod_{S_1}^{\varphi}"$$ 
in \emph{loc. cit.} is an equivalence of categories when \emph{restricted to the unipotent sub-categories}.
\end{itemize}
  In fact, one can \emph{directly} prove the weaker (Statement A) using similar strategy as in (Statement B). However, if one examines the proof, it seems very unlikely that one can remove unipotency condition there.

\item The other part of \cite[Thm. 2.5.6]{Gao17} shows that $\cm_\gs^{p-1}$ is essentially surjective (hence an equivalence)  when \emph{restricted to the unipotent sub-categories}; this can be regarded as supplement to results listed in Thm. \ref{thm53}.

The proof of essentially surjectivity in \cite[Thm. 2.5.6]{Gao17} uses similar strategy as \cite[Thm. 2.2.1]{CL09} and \cite[Prop. 6.6]{Kim15}, but the result is \emph{weaker} than  Thm. \ref{thm53}(2) stated here. 
Indeed, in the proof of \cite[Thm. 2.5.6]{Gao17}, we make use of a ring $``\Sigma" \subset S$, hence the iteration procedure actually becomes harder to converge: this explains why in \emph{loc. cit.} we did not obtain Thm. \ref{thm53}(2).
\end{enumerate}
\end{rem}

\subsection{Modules with connections}
\begin{defn} \label{d521}
Let $\mathrm{MF}^{\mathrm{BT}}_S(\varphi, \nabla)$ be the category of the following data:
\begin{enumerate}
\item $(M, \Fil^1 M, \varphi_1) \in \Mod_S^{\varphi, 1}$;
\item $\nabla: M/I_uM \to M/I_uM\otimes_{\O_{K_0}} \wh{\Omega}_\oko$ is a  quasi-nilpotent integrable connection which commutes with $\varphi$-action on $M/I_uM$.
\end{enumerate}
\end{defn}

\begin{defn} \label{def57}
Let $\mathrm{MF}^{\mathrm{BT}}_\gs(\varphi, \nabla)$ be the category of the following data (which are called \emph{minuscule  Breuil-Kisin modules with connections}): 
\begin{enumerate}
\item $(\gm, \varphi) \in \Mod_{\gs}^{\varphi, 1}$;
\item $\nabla: \varphi^\ast(\gm/u\gm) \to \varphi^\ast(\gm/u\gm) \otimes_{\O_{K_0}} \wh{\Omega}_\oko$ is a  quasi-nilpotent integrable connection which commutes with $\varphi$-action on $\varphi^\ast(\gm/u\gm)$, where $$\varphi^\ast(\gm/u\gm)= \gm/u\gm \otimes_{\varphi, \oko}\oko.$$
\end{enumerate}
\end{defn}

\begin{rem} \label{r523}
\begin{enumerate}
\item In Def. \ref{def57}, we need to define $\nabla$ over the ``Frobenius twist" $\varphi^\ast(\gm/u\gm)$ so that the it is \emph{compatible} with the  $\nabla$ in Def. \ref{d521} under the functor \eqref{eqf52} below. Our definition is compatible with that in \cite[Def. 6.1]{Kim15}.
Note however that in \cite[Def. 4.16]{BT08}, the $\nabla$-operator is defined over $\gm/u\gm$; however, it seems likely to be a  \emph{wrong} definition, see next item.

\item As noted in Rem. \ref{remdimp}, for a Fontaine module $D$, there is a $K_0$-linear  isomorphism $\varphi^\ast(D) \simeq D$; hence to define $\nabla$ over $D$ is \emph{equivalent} to do so over  $\varphi^\ast(D)$. Indeed, this is why we do not need ``Frobenius twist" in the $\nabla$-operators in Def. \ref{deffilnmod} and all the various categories in \S \ref{sec3}, as all of them are defined on the $K_0$-level.
However, on the \emph{integral} level, we do no expect $\varphi^\ast(\gm/u\gm) \simeq \gm/u\gm$ in general.
\end{enumerate}
\end{rem}

The functor in Construction \ref{cons52} obviously induces a functor 
\begin{equation} \label{eqf52}
\BT_\gs^S: \mathrm{MF}^{\mathrm{BT}}_\gs(\varphi, \nabla) \to \mathrm{MF}^{\mathrm{BT}}_S(\varphi, \nabla).
\end{equation}
 The following corollary follows obviously from Thm. \ref{thm53}.
 
\begin{cor}\label{c524}
Consider the functor $\BT_\gs^S$. 
It is an equivalence of categories when $p>2$. It is  essentially surjective when $p=2$. For any $p$, it induces an equivalence of the isogeny categories.
\end{cor}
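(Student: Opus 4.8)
The plan is to deduce all three assertions formally from the corresponding clauses of Theorem \ref{thm53} (applied with $r=1$), once the connection data on the two sides have been matched. The underlying functor of $\BT_\gs^S$ is literally $\cm_\gs^1$, so the only genuinely new point to check is that the extra $\nabla$-operators are preserved and reflected; everything else is inherited from $\cm_\gs^1$.

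The key computation I would carry out first is the identification of the modules that carry the connection. Given $(\gm, \varphi)\in \Mod_\gs^{\varphi, 1}$ with image $M=S\otimes_{\varphi, \gs}\gm$, reducing modulo $I_u$ and using $S/I_u\simeq \oko$ (via $u\mapsto 0$) gives
\[
M/I_u M \;\simeq\; \oko\otimes_{\varphi, \gs}\gm \;\simeq\; \oko\otimes_{\varphi, \oko}(\gm/u\gm) \;=\; \varphi^\ast(\gm/u\gm),
\]
where the middle isomorphism holds because the composite $\gs\xrightarrow{\varphi} S\to S/I_u=\oko$ factors through $\gs/u\gs=\oko$ followed by the Frobenius on $\oko$ (indeed $u$ acts as $0$, so the tensor only sees $\gm/u\gm$, twisted by $\varphi$). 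This is a $\varphi$-equivariant isomorphism, and it is precisely the reason the connection in Definition \ref{def57} is imposed on the Frobenius twist $\varphi^\ast(\gm/u\gm)$ rather than on $\gm/u\gm$ itself (cf. Remark \ref{r523}(1)). Under this canonical identification, a quasi-nilpotent integrable $\varphi$-compatible connection on $\varphi^\ast(\gm/u\gm)$ corresponds to exactly such a connection on $M/I_u M$, so $\BT_\gs^S$ is well-defined and the $\nabla$-data on the two sides are in a bijective, functorial correspondence.

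With this in hand, each clause follows immediately. For $p>2$ we have $1\leq p-2$, so Theorem \ref{thm53}(1) says $\cm_\gs^1$ is an equivalence; since the $\nabla$-data match up under the functor (and, by the same canonical identification, under any quasi-inverse), $\BT_\gs^S$ is an equivalence. For $p=2$, Theorem \ref{thm53}(2) gives essential surjectivity of $\cm_\gs^1$: given an object $(M, \Fil^1 M, \varphi_1, \nabla)$ of $\mathrm{MF}^{\mathrm{BT}}_S(\varphi, \nabla)$, I would choose a preimage $\gm$ of $(M, \Fil^1 M, \varphi_1)$ under $\cm_\gs^1$ and transport $\nabla$ along the isomorphism $M/I_u M\simeq \varphi^\ast(\gm/u\gm)$ to equip $\gm$ with the required connection, producing a preimage in $\mathrm{MF}^{\mathrm{BT}}_\gs(\varphi, \nabla)$; note that no full-faithfulness claim is made at $p=2$, consistent with Remark \ref{rembtwrong}. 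Finally, for any $p$, Theorem \ref{thm53}(3) gives an equivalence of isogeny categories for $\cm_\gs^1$, and the same transport of connections upgrades it to an equivalence of isogeny categories for $\BT_\gs^S$.

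The only point requiring care -- and hence the main (though modest) obstacle -- is verifying that the identification $M/I_u M\simeq \varphi^\ast(\gm/u\gm)$ is compatible with the $\varphi$-structures on both sides, so that the phrase ``$\varphi$-compatible quasi-nilpotent integrable connection'' really does transport in both directions. This is essentially bookkeeping: one must check that the Frobenius induced on $M/I_u M$ from $\varphi_1$ agrees, under the isomorphism, with the Frobenius induced on $\varphi^\ast(\gm/u\gm)$ from $\varphi_\gm$, which is exactly the compatibility built into Definitions \ref{d521} and \ref{def57} and recorded in Remark \ref{r523}. No analytic or convergence input beyond Theorem \ref{thm53} is needed, which is why the corollary is ``obvious'' once the definitions are aligned.
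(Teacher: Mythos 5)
Your proposal is correct and takes essentially the same route as the paper, which disposes of Cor.~\ref{c524} with the single remark that it ``follows obviously from Thm.~\ref{thm53}'': the canonical identification $M/I_uM\simeq \varphi^\ast(\gm/u\gm)$ that you verify is precisely the compatibility of $\nabla$-data already built into Def.~\ref{def57} and recorded in Rem.~\ref{r523}(1). You have simply made explicit the transport-of-structure bookkeeping that the paper leaves implicit.
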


\begin{rem}\label{r525}
Let $R$ be as in Assumption \ref{assr}, and suppose it satisfies Assumption  (i) there.
Then \cite[Prop. 6.6]{Kim15} proves the relative version of Thm. \ref{thm53} and hence Cor. \ref{c524}. 
\end{rem}

\section{Breuil-Kisin modules and $p$-divisible groups}\label{secpdiv}

In this section, we study $p$-divisible groups over $\ok$. The first two subsections \S \ref{s61}, \S \ref{s62} contain results established by Brinon-Trihan (except Thm. \ref{thmtp}), on the relationship between $p$-divisible groups, $S$-modules and crystalline representations.
In \S \ref{sub63}, we classify $p$-divisible groups by Breuil-Kisin modules with connections; the case when $p>2$ is already known, the case $p=2$ requires new ideas and our approach is informed by the integral theory  developed in \S \ref{sec4}. Finally in \S \ref{sub64}, we classify finite flat group schemes over $\ok$.
 
\subsection{$p$-divisible groups and  $S$-modules}\label{s61}
Let $ \mathrm{BT}(\O_K)$ be the category of $p$-divisible groups over $\ok$.
Let $G \in \mathrm{BT}(\O_K)$; it corresponds to a compatible system $(G_n)_{n>0}$ where $G_n$ is a Barsotti-Tate group  over $\ok/p^n\ok$ for each $n$. Let $\bfD^\ast(G_n)$ be the \emph{co-variant} (cf. Convention \ref{subsubco}) Dieudonn\'e crystals, and consider its evaluation on the thickening $S \to \ok/p^n\ok$. Define
$$ \mathbf{M}(G)=\bfD^\ast(G)(S\to\ok):=\projlim_{n>0}\bfD^\ast(G_n)(S\to \ok/p^n\ok). $$

\begin{thm} \label{thm61}
The above construction gives rise to a functor
$$\mathbf{M}: \mathrm{BT}(\O_K) \to \mathrm{MF}^{\mathrm{BT}}_S(\varphi, \nabla).$$
The functor is an equivalence if $p>2$; it induces an equivalence of isogeny categories if $p=2$.
\end{thm}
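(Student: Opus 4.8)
The plan is to recognize $\mathbf M$ as the composite of the covariant crystalline Dieudonn\'e functor $G \mapsto \bfD^\ast(G)$ with evaluation on the PD-thickening $S \to \ok$, and to analyze these two steps separately, following Brinon--Trihan \cite{BT08}. First I would check that $\mathbf M(G)$ genuinely lands in $\mathrm{MF}^{\mathrm{BT}}_S(\varphi, \nabla)$. That it is finite free over $S$ is standard; the submodule $\Fil^1 \mathbf M(G)$ is the preimage of the Hodge filtration on $\mathbf M(G)/\Fil^1 S\cdot \mathbf M(G)=\bfD^\ast(G)(\ok)$, and $\varphi_1$ is the divided Frobenius of the Dieudonn\'e crystal. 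The connection $\nabla$ on $\mathbf M(G)/I_u\mathbf M(G)$ is the canonical quasi-nilpotent integrable connection attached to a crystal by Berthelot's dictionary between crystals on the crystalline site of $\ok/p^n$ and finite locally free $S/p^n$-modules with quasi-nilpotent integrable connection; that $\nabla$ is integrable, quasi-nilpotent, and satisfies $\nabla\varphi=\varphi\nabla$ is built into this formalism. The only delicate point is to record $\nabla$ on the correct reduction so that it matches Def.~\ref{d521} (compare Rem.~\ref{r523}).

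For full faithfulness I would factor the argument through Berthelot's equivalence: a Dieudonn\'e crystal is recovered from its value on $S$ together with $\nabla$, so evaluation on $S$ is itself fully faithful and loses no information. It then suffices that $G\mapsto \bfD^\ast(G)$ be fully faithful, which for $p>2$ is the full faithfulness of the crystalline Dieudonn\'e functor over a base on which $p$ is nilpotent (Berthelot--Breen--Messing, de Jong), and which for $p=2$ holds only after inverting $p$.

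For essential surjectivity, given $(M,\Fil^1 M,\varphi_1,\nabla)$ I would first use Berthelot's equivalence to produce a Dieudonn\'e crystal carrying the extra Frobenius and filtration data, and then realize it as $\bfD^\ast(G)$. The point is that the essential image of $\bfD^\ast$ is cut out exactly by the minuscule (height $\leq 1$) condition built into $\mathrm{MF}^{\mathrm{BT}}_S(\varphi,\nabla)$; this identification is proved by Grothendieck--Messing deformation theory, with $\Fil^1 M$ reducing to the Hodge filtration on $M\otimes_S\ok$ to prescribe the deformation from the special fibre back to $\ok$, and with the connection $\nabla$ furnishing the horizontal data needed over the imperfect residue field $k_K$.

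The main obstacle is the case $p=2$. Both the integral full faithfulness of the crystalline Dieudonn\'e functor and Grothendieck--Messing deformation theory depend on a well-behaved divided power structure on the ideal defining the thickening, and the divided powers on $(2)$ are badly behaved; this is exactly why one obtains only an equivalence of isogeny categories, rather than an integral equivalence, when $p=2$. After inverting $p$ these divided-power difficulties disappear and the isogeny statement follows, consistently with the isogeny-category equivalences recorded in Thm.~\ref{thm53}.
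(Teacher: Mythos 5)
Your overall architecture --- the covariant Dieudonn\'e crystal evaluated on the PD-thickening $S \to \ok$, Berthelot's dictionary between crystals on $\mathrm{CRIS}(\ok/p^n)$ and $S/p^n$-modules with quasi-nilpotent integrable connection, Grothendieck--Messing deformation theory for essential surjectivity, and divided-power pathologies at $p=2$ forcing only an isogeny statement --- is exactly the route of Brinon--Trihan, and the paper's own proof of Thm.~\ref{thm61} consists of nothing more than the citation \cite[Prop.~5.9, Thm.~5.10]{BT08}. So at the level of strategy you have reconstructed the intended argument.

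There is, however, one concrete missing idea, sitting precisely at the point you dismiss with ``the only delicate point is to record $\nabla$ on the correct reduction.'' An object of $\mathrm{MF}^{\mathrm{BT}}_S(\varphi,\nabla)$ (Def.~\ref{d521}) carries a connection only on $M/I_uM$, over $\oko$; it does \emph{not} carry a connection on $M$ over $S$. But Berthelot's equivalence --- which you invoke both to argue that ``evaluation on $S$ is itself fully faithful and loses no information'' and, in the essential surjectivity step, to ``produce a Dieudonn\'e crystal'' --- takes as input a quasi-nilpotent integrable connection on all of $M$. As written, your essential-surjectivity step cannot even start: there is no connection on $M$ to feed into the dictionary. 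Likewise your full-faithfulness factorization is incomplete, since morphisms in $\mathrm{MF}^{\mathrm{BT}}_S(\varphi,\nabla)$ are only required to commute with $\nabla$ modulo $I_u$, so it is not a priori clear they are morphisms of crystals. The needed lemma is that a quasi-nilpotent integrable connection $\nabla_M\colon M \to M\otimes_S \wh{\Omega}_S$ commuting with $\varphi$ and inducing the given $\nabla$ on $M/I_uM$ exists and is \emph{unique}; uniqueness makes any $\varphi$-equivariant map automatically horizontal (repairing full faithfulness), and existence is proved by a successive-approximation (Frobenius contraction) argument, using that $d\varphi$ is topologically nilpotent on the relevant differentials --- this is where the quasi-nilpotence hypothesis actually earns its keep, and it is the technical heart of \cite{BT08} (carried out in the relative setting in \cite{Kim15}). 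A smaller quibble: your closing claim that at $p=2$ ``after inverting $p$ these divided-power difficulties disappear and the isogeny statement follows'' is too quick --- one cannot simply invert $p$ inside Grothendieck--Messing theory, and the isogeny equivalence at $p=2$ requires its own argument, which is supplied in the cited results rather than being formal.
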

\begin{proof}
This is \cite[Prop. 5.9, Thm. 5.10]{BT08}.  
\end{proof}

\begin{prop} 
Let $G\in \mathrm{BT}(\O_K)$, the functor in Thm. \ref{thm61} induces a   $(\varphi, \nabla)$-equivariant isomorphism  of  $\oko$-modules:
\begin{equation}\label{eq611}
\bfD^\ast(G\otimes_\ok k)(\oko)=\mathbf{M}(G)\otimes_S \oko=\bfm(G)/I_u\bfm(G).
\end{equation}
Furthermore, we have a $\varphi$-equivariant isomorphism of $S[1/p]$-modules:
\begin{equation}\label{eq612}
\mathbf{M}(G)[1/p] \simeq \bfD^\ast(G\otimes_\ok k)(\oko)\otimes_\oko S[1/p].
\end{equation}
\end{prop}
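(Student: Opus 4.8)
The plan is to read both identities as expressions of the \emph{crystal} nature of the Dieudonn\'e functor $\mathbf{D}^\ast$: \eqref{eq611} is a base-change/functoriality statement along the special fibre, while \eqref{eq612} is the trivialization of the crystal over $S[1/p]$ by its own connection. First I would unwind the definitions. By construction $\mathbf{M}(G)=\mathbf{D}^\ast(G)(S\to\ok)$ is the value of the covariant Dieudonn\'e crystal of the system $(G_n)$ on the PD-thickening $S\to\ok$ with $u\mapsto\pi$; it is finite free over $S$ and carries the Frobenius and the integrable connection supplied by the crystal structure. Since $S/I_u\simeq\oko$ by definition of $I_u$, the second equality $\mathbf{M}(G)\otimes_S\oko=\mathbf{M}(G)/I_u\mathbf{M}(G)$ in \eqref{eq611} is immediate, so the real content of \eqref{eq611} is the identification of this fibre with $\mathbf{D}^\ast(G\otimes_\ok k)(\oko)$.

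For that identification I would exhibit a morphism of PD-thickenings and invoke the crystal property. The reduction $S\twoheadrightarrow S/I_u=\oko$ (sending $u\mapsto 0$), together with $\oko\twoheadrightarrow k$, exhibits both $\oko$ and $S$ as PD-thickenings of $k$, and $S\to\oko$ is a PD-morphism over $\mathbb Z_p$ between them. As $G\otimes_\ok k$ is the pullback of $G$ along the closed immersion $\Spec k\hookrightarrow\Spec\ok/p$, its Dieudonn\'e crystal is the pullback crystal; evaluating it and using the transition isomorphism attached to $S\to\oko$ gives $\mathbf{D}^\ast(G\otimes_\ok k)(\oko)\simeq\mathbf{M}(G)\otimes_S\oko$. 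These transition isomorphisms are by construction compatible with Frobenius and with the connection, which yields the claimed $(\varphi,\nabla)$-equivariance; here the relevant $\nabla$ on the right is exactly the $\oko$-direction part of the crystal connection restricted to the fibre (valued in $\wh{\Omega}_\oko$), matching the $\nabla$ of $\mathrm{MF}^{\mathrm{BT}}_S(\varphi,\nabla)$.

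For \eqref{eq612} I would trivialize $\mathbf{M}(G)$ over $S[1/p]$ using the connection. Writing the crystal connection as a sum of the $u$-direction part and the $\oko$-directions, parallel transport in $u$ from $u=0$ (that is, solving the horizontal-section equation by the $p$-adically convergent Taylor expansion in $u$, convergence being guaranteed by integrability and topological quasi-nilpotence) produces a horizontal $S[1/p]$-linear isomorphism $\mathbf{M}(G)[1/p]\simeq\bigl(\mathbf{M}(G)/I_u\mathbf{M}(G)\bigr)[1/p]\otimes_{\oko[1/p]}S[1/p]$. Combined with \eqref{eq611} this is the desired isomorphism \eqref{eq612}. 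The $\varphi$-equivariance follows because the crystal Frobenius commutes with $\nabla$, hence preserves horizontal sections and stabilizes the $u=0$ fibre, so under the trivialization it becomes $\varphi_{\mathbf{D}^\ast(G\otimes_\ok k)(\oko)}\otimes\varphi_S$ (using $\varphi(u)=u^p$); this is the same mechanism as in the perfect-residue-field case of Kisin's Lemma~1.2.6 and is parallel to Prop.~\ref{p325}.

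The main obstacle, I expect, is not \eqref{eq612} — which is formal crystalline trivialization once the connection is in hand — but the integral statement \eqref{eq611}, specifically the careful identification of $\mathbf{M}(G)$, which is \emph{defined} via the structure map $u\mapsto\pi$, with a value of the special fibre's crystal accessed via $u\mapsto 0$. Pinning this down means checking that the two structure maps $S\to\ok/p$ agree modulo the PD-ideal and that the PD-structures are compatible, so that the crystal's insensitivity to the lift applies; this is exactly the input one borrows from Berthelot--Breuil--Messing crystalline Dieudonn\'e theory and from \cite{BT08}. Once that point is settled, both displayed isomorphisms and their $\varphi$- and $\nabla$-compatibility follow as above.
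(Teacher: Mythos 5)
Your reconstruction of \eqref{eq611} is essentially sound, but it contains one imprecision worth fixing, and your proof of \eqref{eq612} has a genuine gap. On \eqref{eq611}: the assertion that $S\twoheadrightarrow k$ ``exhibits $S$ as a PD-thickening of $k$'' is literally false, because the kernel of $S\to k$ contains $u$, and $u$ has no divided powers in $S$ (e.g.\ $u^p/p!\notin S$ as soon as $e>p$; only $E(u)$, $p$, and hence $u^e$ up to correction, admit divided powers). The correct formulation, which your mention of the pullback crystal almost gives, is to stay in the (big) crystalline site of $\ok/p^n$: both $\bigl(\Spec \ok/p^n\hookrightarrow \Spec S/p^n\bigr)$ (via $u\mapsto\pi$) and $\bigl(\Spec k\hookrightarrow \Spec \oko/p^n\bigr)$ (via the closed immersion $\Spec k\to\Spec \ok/p^n$, which is a homeomorphism since $\pi$ is nilpotent mod $p$) are objects there, the map $u\mapsto 0$ is a PD-morphism between them making the relevant square commute (your check that the two composites $S\to k$ agree is exactly what is needed), and then one combines the transition isomorphism of the crystal with the base-change property $\bfD^\ast(G_n)|_{\Spec k}\simeq \bfD^\ast(G_n\otimes k)$ from \cite{BBM}. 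With that repair, your argument for \eqref{eq611}, including the $(\varphi,\nabla)$-equivariance, is correct. (For the record, the paper itself proves the proposition purely by citation: \eqref{eq611} is extracted from the proof of \cite[Prop.~5.9]{BT08} and \eqref{eq612} is \cite[Lem.~6.1]{BT08}.)

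The real gap is your convergence claim for \eqref{eq612}. You assert that the horizontal-section Taylor expansion in $u$ converges in $S[1/p]$ ``by integrability and topological quasi-nilpotence.'' This fails: since $u$ does not lie in a PD-ideal of $S$, the crystalline Taylor formula does not apply to the pair of maps $u\mapsto\pi$, $u\mapsto 0$, and quasi-nilpotence alone only yields $p$-adic gains of order $n/N_0$ in $\nabla_{\partial_u}^n(m)$, which need not dominate the denominators $v_p(1/n!)\sim n/(p-1)$ (the $S$-integrality of $u^n$ only recovers about $n/(e(p-1))$). One can write down quasi-nilpotent integrable connections on finite free $S$-modules whose naive Taylor series diverge in $S[1/p]$ (already for $p=2$ and moderate $e$); equivalently, a quasi-nilpotent integrable connection on the open unit disc need not be constant. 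The missing idea is precisely Dwork's trick: one first trivializes on a small disc (or modulo a high power of the filtration), then uses the Frobenius structure $\varphi(u)=u^p$ of finite $E(u)$-height to propagate the isomorphism to all of $S[1/p]$ by iteration. This is how \cite[Lem.~1.2.6]{Kis06}, the paper's Prop.~\ref{p325}, and \cite[Lem.~6.1]{BT08} actually proceed. You invoke Kisin's lemma only a posteriori, for $\varphi$-equivariance of an isomorphism you have not yet constructed; in the correct argument $\varphi$ is an essential ingredient of the construction itself, and $\varphi$-equivariance then follows from uniqueness of the horizontal (Frobenius-compatible) extension, as you say.
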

\begin{proof}
\eqref{eq611} is extracted from the proof of \cite[Prop. 5.9]{BT08}, 
and \eqref{eq612} is \cite[Lem. 6.1]{BT08}.
\end{proof}

\begin{rem}\label{r613}
Let $R$ be as in Assumption \ref{assr}, and suppose it satisfies Assumption  (ii) there.
Then \cite[Thm. 3.17]{Kim15} proves the relative version of Thm. \ref{thm61}; it implies the relative version of \eqref{eq611}.
The relative version of \eqref{eq612} also follows from similar argument as in \cite[Lem. 6.1]{BT08}.
\end{rem}

\subsection{$p$-divisible groups and crystalline representations}\label{s62}
Let $\Rep_{\Qp}^{\cris, 1}(G_K)$ (resp.  $\Rep_{\Zp}^{\cris, 1}(G_K)$) be the category of crystalline (resp. integral crystalline) representations of $G_K$ with Hodge-Tate weights in $\{0, 1\}$.
In this subsection, we show that $\Rep_{\Zp}^{\cris, 1}(G_K)$ is equivalent to $\BT(\ok)$.

\begin{prop} \label{pr63}
\cite[Cor. 6.4]{BT08}
Let $G \in \BT(\ok)$, and let $T_p(G)$ be its \emph{co-variant} Tate module. Then $T_p(G) \in \Rep_{\Zp}^{\cris, 1}(G_K)$. 
Furthermore, if we let $V_p(G)=T_p(G)[1/p]$, then there is a $(\varphi, \nabla)$-equivariant isomorphism of $K_0$-vector spaces:
$$D_\cris(V_p(G))=\bfD^\ast(G_k)(\oko)[1/p].$$
\end{prop}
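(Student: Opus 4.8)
The plan is to prove both assertions at once by constructing a crystalline comparison isomorphism for $G$ over $\acris$ and then taking Galois invariants. First I would recall that the covariant Dieudonn\'e crystal $\bfD^\ast(G)$ can be evaluated on the PD thickening $\theta_{K_0}\colon\acris\to\oc$; since it is a crystal and $\oko\to\acris$ is a PD morphism, its value is
\[
\bfD^\ast(G)(\acris)\;\simeq\;\bfD^\ast(G\otimes_\ok k)(\oko)\otimes_\oko\acris ,
\]
the isomorphism being the parallel transport supplied by the crystal's connection. The crux is then the comparison isomorphism
\[
T_p(G)\otimes_{\Zp}\acris\;\xrightarrow{\ \sim\ }\;\bfD^\ast(G\otimes_\ok k)(\oko)\otimes_\oko\acris ,
\]
which is $G_K$-equivariant (the Galois action on the right being only through $\acris$) and compatible with $\varphi$, with the Hodge filtration, and with the connection.

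Granting this, the identification is pure bookkeeping. Inverting $t$ and $p$ turns it into a $\bcris$-linear, $G_K$-, $\varphi$- and $\nabla$-equivariant isomorphism $V_p(G)\otimes_{\Qp}\bcris\simeq\bfD^\ast(G\otimes_\ok k)(\oko)\otimes_\oko\bcris$. Taking $G_K$-invariants and using $(\bcris)^{G_K}=K_0$ together with the finite freeness of $\bfD^\ast(G\otimes_\ok k)(\oko)$ over $\oko$ gives
\[
D_\cris(V_p(G))=\bigl(V_p(G)\otimes_{\Qp}\bcris\bigr)^{G_K}\simeq\bfD^\ast(G\otimes_\ok k)(\oko)\otimes_\oko K_0=\bfD^\ast(G\otimes_\ok k)(\oko)[1/p].
\]
In particular $\dim_{K_0}D_\cris(V_p(G))=\dim_{\Qp}V_p(G)$, so $V_p(G)$ is crystalline; and the Hodge--Tate weights lie in $\{0,1\}$ because the Hodge filtration on the evaluated Dieudonn\'e module---which induces the filtration on $D_\cris$ after extending scalars to $K$---has a single jump, the defining feature of a $p$-divisible group.

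It remains to match the structures. The Frobenius on $D_\cris$ corresponds to that of the Dieudonn\'e crystal since the comparison is $\varphi$-equivariant by construction. The connection is the delicate point in the imperfect residue field case: on the right $\bfD^\ast(G\otimes_\ok k)(\oko)$ carries its canonical integrable, quasi-nilpotent crystal connection, while on the left $D_\cris$ inherits the connection induced by Brinon's $\nabla$ on $\bcris$ (Notation \ref{n243}), built from the derivations $N_i$ along the variables $u_i=t_i\otimes1-1\otimes[\tilde t_i]$. These agree because Brinon's $\nabla$ on $\acris$ is exactly the connection realizing the crystal's parallel transport across the thickenings $\oko\to\acris$, so the naturality of the comparison forces $\nabla$-equivariance.

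The main obstacle is therefore the crystalline comparison isomorphism itself, together with its $\nabla$-compatibility; everything afterward is invariants and the identity $(\bcris)^{G_K}=K_0$. To obtain it I would reduce to the perfect residue field case along $\ok\to\O_\bbk$: the base change $G\otimes_\ok\O_\bbk$ is a $p$-divisible group over a base with perfect residue field, whose Tate module is $T_p(G)|_{G_\bbk}$, and Fontaine's classical crystalline comparison provides the isomorphism over $\mathbb{A}_\cris$. The comparison over $\acris$ is $G_K$-equivariant by functoriality of the Dieudonn\'e crystal; the genuinely new point is its compatibility with $\nabla$, which I would verify through the presentation of $\acris$ as the $p$-adic completion of $\acris^\nabla\langle u_1,\dots,u_d\rangle_{\mathrm{PD}}$ from \eqref{239}, tracking how $\nabla$ acts on the $u_i$. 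This control of the connection along the imperfect directions is precisely what Brinon's machinery recalled in \S\ref{sec2} is designed to supply, and it is where I expect the real work to lie. (Alternatively the comparison can be built directly via the universal vector extension, as in \cite{BT08}.)
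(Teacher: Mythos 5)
The paper offers no argument of its own here: its ``proof'' is the citation \cite[Cor.~6.4]{BT08}, where the period morphism is constructed \emph{directly} over $\acris$ via the universal vector extension (equivalently, by applying $\bfD^\ast$ to morphisms $\Qp/\Zp \to G_{\oc}$), so that $G_K$-equivariance and horizontality are built into the construction from the start. Your sketch has the correct global shape --- a comparison over $\acris$, the base-change identification $\bfD^\ast(G)(\acris)\simeq \bfD^\ast(G_k)(\oko)\otimes_\oko\acris$ along the $G_K$-equivariant PD morphism $\oko\to\acris$, inverting $t$ and $p$, taking invariants with $(\bcris)^{G_K}=K_0$, the dimension count for crystallinity, the single filtration jump for the weights, and the identification of Brinon's $\nabla$ on $\acris$ with the crystal connection --- and your closing parenthetical even names the construction that \cite{BT08} actually uses.

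But your primary route has a genuine gap precisely at its crux. You propose to obtain the comparison by base change to $\O_\bbk$ and Fontaine's classical comparison over $\mathbb{A}_\cris\simeq\acris^\nabla$; a map produced this way is \emph{a priori} only $G_\bbk$-equivariant, since $G_\bbk$ is the largest group visible in the perfect-residue-field picture. Asserting full $G_K$-equivariance ``by functoriality of the Dieudonn\'e crystal'' presupposes that your map is the evaluation of a canonical, choice-free construction over $\acris$ itself --- and that canonical construction is exactly the universal-vector-extension definition of \cite{BT08}; once one has it, the detour through $\bbk$ is no longer needed to \emph{define} the map (it remains useful only for verifying, after base change, that the integral comparison is injective with cokernel killed by $t$). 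One could instead try to patch your route via Lem.~\ref{lemsubgen}: the set of $\sigma\in G_K$ commuting with a fixed map is a closed subgroup, so $G_\bbk$- and $G_\Kinfty$-equivariance together would suffice --- but then you owe an independent proof of $G_\Kinfty$-equivariance, which is no easier. A second, smaller slip: the integral comparison $T_p(G)\otimes_{\Zp}\acris\to\bfD^\ast(G)(\acris)$ is \emph{not} an isomorphism (it is injective with cokernel killed by $t$), so the isomorphism should only be asserted after inverting $t$; this does not affect the computation of $D_\cris(V_p(G))$, but as written the ``crux'' isomorphism is false integrally.
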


\begin{rem}
Let $R$ be   a normal domain (the general assumption in \cite[\S 4, \S5]{Kim15}), and suppose it satisfies Assumption  \ref{assr}(iii), then \cite[Thm. 5.2]{Kim15} and particularly \cite[Cor. 5.3]{Kim15} proves the relative version of Prop. \ref{pr63}.
\end{rem}

\begin{theorem} \cite[Thm. 6.10]{BT08} \label{thm610}
The functor $V_p$ induces an equivalence of categories
$$\mathrm{BT}(\O_K)\otimes_\Zp \Qp \to  \Rep_{\Qp}^{\cris, 1}(G_K)$$
\end{theorem}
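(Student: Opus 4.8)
The plan is to factor $V_p$ through the Colmez--Fontaine equivalence and the isogeny Dieudonn\'e theory already assembled above, so that $V_p$ itself becomes a composite of equivalences. Since the target is stated on isogeny categories, I never need any integral statement, which conveniently sidesteps the $p=2$ difficulties: both Thm.~\ref{thm61} and Cor.~\ref{c524} supply honest equivalences of isogeny categories for every $p$.

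First I would build the chain of isogeny equivalences
$$\BT(\ok)\otimes_\Zp\Qp \xrightarrow[\sim]{\mathbf{M}} \mathrm{MF}^{\mathrm{BT}}_S(\varphi,\nabla)\otimes_\Zp\Qp \xleftarrow[\sim]{\BT_\gs^S} \mathrm{MF}^{\mathrm{BT}}_\gs(\varphi,\nabla)\otimes_\Zp\Qp,$$
using Thm.~\ref{thm61} and Cor.~\ref{c524}, thereby identifying $\BT(\ok)\otimes_\Zp\Qp$ with the isogeny category of minuscule Breuil--Kisin modules with connection. On the other side, the crystalline case of Thm.~\ref{thmCF} (the theorem of \cite{Bri06}) makes $\dcris$ an equivalence from $\Rep^{\cris,1}_\Qp(G_K)$ onto the weakly admissible filtered $(\varphi,N=0,\nabla)$-modules with Hodge--Tate weights in $\{0,1\}$; via the fully faithful functor $(\ast)$ of Thm.~\ref{323} together with the $E(u)$-height $\le 1$ condition, these correspond exactly to the same isogeny category $\mathrm{MF}^{\mathrm{BT}}_\gs(\varphi,\nabla)\otimes_\Zp\Qp$ (the compatibility of the two placements of $\nabla$, on $\gm/u\gm$ versus on $\varphi^\ast(\gm/u\gm)$, being precisely Rem.~\ref{r523}). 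Composing, one obtains a candidate equivalence $\BT(\ok)\otimes_\Zp\Qp \to \Rep^{\cris,1}_\Qp(G_K)$, and the whole content of the theorem is that this composite is naturally isomorphic to $V_p$.

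To identify the composite with $V_p$, I would compare filtered $\varphi$-modules. By Prop.~\ref{pr63}, $\dcris(V_p(G)) = \bfD^\ast(G\otimes_\ok k)(\oko)[1/p]$; by \eqref{eq611} and the identification $\mathbf{M}(G)\simeq S\otimes_{\varphi,\gs}\gm$ coming from $\BT_\gs^S$, this equals $\oko\otimes_{\varphi,\gs}\gm=\varphi^\ast(\gm/u\gm)[1/p]$ as a $\varphi$-module. On the other hand, if $D$ denotes the filtered module attached to $\gm$ by Kisin's construction, then $D\otimes_\ko S[1/p]\simeq \gm\otimes_{\varphi,\gs}S[1/p]$ by Prop.~\ref{p325}, and reducing modulo $I_u$ (together with \eqref{eq612}) identifies $D$ with the same filtered $\varphi$-module. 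Hence $\dcris(V_p(G))\simeq D=\dcris(V_{\mathrm{cris}}(D))$ compatibly with $\varphi$, $N=0$, $\nabla$ and the filtration, so full faithfulness of $\dcris$ forces $V_p(G)\simeq V_{\mathrm{cris}}(D)$; that is, $V_p$ agrees with the composite. As the composite is an equivalence, so is $V_p$.

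The main obstacle will be the compatibility of \emph{filtrations}, not of the underlying $\varphi$-modules. The isomorphism of Prop.~\ref{pr63} and the chain above match the $(\varphi,N,\nabla)$-structures rather directly, but I must verify that the Hodge filtration on $\bfD^\ast(G\otimes_\ok k)(\oko)$ coming from the $p$-divisible group coincides with the filtration that Kisin's functor reads off from the height-$1$ structure of $\gm$. This is where the minuscule/height-$\le1$ hypothesis is genuinely used, and where one must keep careful track of the Frobenius twist $\varphi^\ast(\gm/u\gm)$ versus $\gm/u\gm$ (cf.\ Rem.~\ref{r523}). A secondary point is to check that every identification is natural in $G$ so that they assemble into an isomorphism of functors; this follows from the functoriality already built into Thm.~\ref{thm61}, Cor.~\ref{c524} and Thm.~\ref{323}.
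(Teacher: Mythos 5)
Your overall skeleton --- the isogeny equivalences from Thm.~\ref{thm61} and Cor.~\ref{c524}, plus the module comparisons \eqref{eq611}, \eqref{eq612}, Prop.~\ref{p325} and Prop.~\ref{pr63} --- matches the paper's proof, but there is a genuine circularity in how you assemble it. You assert that, via the fully faithful functor $(\ast)$ of Thm.~\ref{323}, the weakly admissible modules with Hodge--Tate weights in $\{0,1\}$ ``correspond exactly'' to $\mathrm{MF}^{\mathrm{BT}}_\gs(\varphi,\nabla)\otimes_\Zp\Qp$, and later you invoke ``the filtered module attached to $\gm$ by Kisin's construction.'' Neither is available: Thm.~\ref{323} (cf.\ also Thm.~\ref{thmfful}) gives only \emph{full faithfulness} in the imperfect residue field case --- the paper explicitly says an equivalence is not expected there --- and no functor from a Breuil--Kisin module back to a filtered module has been constructed. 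The essential surjectivity of $(\ast)$ onto the minuscule subcategory is precisely what Thm.~\ref{thm610} delivers \emph{a posteriori} by completing the diagram \eqref{tik2}, so assuming it up front begs the question. The paper avoids this by running the composite in the only available direction, as a fully faithful functor $F\colon \Rep_{\Qp}^{\cris,1}(G_K)\to \mathrm{BT}(\ok)\otimes_\Zp\Qp$, and then proving $V_p\circ F\simeq \mathrm{id}$.

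Two further points. First, once your filtered-module comparison is rewritten so that $D,\gm,M,G$ are all produced \emph{from} $V$ (rather than $D$ from $\gm$), your ending --- an isomorphism $D_\cris(V_p(G))\simeq D$ in $\mathbf{MF}^{\mathrm{wa},1}_{K/K_0}(\varphi,\nabla)$ followed by full faithfulness of $D_\cris$ --- is a legitimate alternative to the paper's: the paper instead tensors everything to $\bcris$, obtains a filtration-compatible $(\varphi,\nabla)$-isomorphism $V\otimes_\Qp\bcris\simeq V_p(G)\otimes_\Qp\bcris$ which is a priori only $G_\Kinfty$-equivariant (trivial $G_\Kinfty$-action on $\gm$, $M$, $S$), and then upgrades to $G_K$-equivariance via the full faithfulness of restriction to $G_\Kinfty$ \cite[Prop.~3.8]{BT08}; your route would bypass that trick, but it requires exactly the filtration compatibility you defer (Prop.~\ref{pr63} as stated is only a $(\varphi,\nabla)$-statement over $K_0$; the paper, like you, defers this check to the proof in \cite{BT08}). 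Second, even granting $V_p\circ F\simeq\mathrm{id}$, your conclusion ``as the composite is an equivalence, so is $V_p$'' does not follow formally: it yields essential surjectivity of $V_p$, while fullness and faithfulness need a further input --- e.g.\ Tate's theorem over $\ok$ \cite{Tat67}, as invoked in the proof of Thm.~\ref{thmtp} --- or else the essential surjectivity of $F$ that you were not entitled to assume.
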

\begin{proof} 
In the beginning of  the proof of \cite[Thm. 6.10]{BT08}, it says that ``we already know that the functor is fully faithful"; it is not clear to us what the authors mean: it is certainly not obvious that $V_p$ is fully faithful (at that point; also see our exposition in the following). (As a side note, it seems to us the authors are \emph{not} pointing to \cite[Prop. 6.8]{BT08}, whose proof is wrong as we mentioned in Rem. \ref{rembtwrong}(1).)

In any case, we choose to give an exposition of the proof. It particularly serves as a good place to see how to piece together the various module \emph{comparisons}  we have obtained so far.

Let $\mathbf{MF}^{\mathrm{wa}, 1}_{K/K_0}(\varphi, \nabla) \subset \MFnablawa$ be the sub-category consisting of objects with $N=0$ and with Hodge-Tate weights in $\{0, 1\}$.
Consider the composite of functors in the following diagram:

\begin{equation}\label{eq610}
\begin{tikzcd}
\mathrm{BT}(\O_K)\otimes_\Zp \Qp                                                &                                                                                             & {\mathrm{MF}^{\mathrm{BT}}_S(\varphi, \nabla) \otimes_\Zp \Qp} \arrow[ll, "{\simeq, \text{Thm. \ref{thm61}}}"']  \\
{\Rep_{\Qp}^{\cris,  1}(G_K) } \arrow[rd, "{\simeq, \text{Thm. \ref{thmCF}}}"'] &                                                                                             & {\mathrm{MF}^{\mathrm{BT}}_\gs(\varphi, \nabla) \otimes_\Zp \Qp} \arrow[u, "{\simeq, \text{Thm. \ref{thm53}}}"'] \\
                                                                                & {\mathbf{MF}^{\mathrm{wa}, 1}_{K/K_0}(\varphi, \nabla)} \arrow[ru, "\text{Thm. \ref{323}}"', hook] &                                                                                                                 
\end{tikzcd}
\end{equation}
The only caveat in the diagram is the fully faithful functor
$$ \mathbf{MF}^{\mathrm{wa}, 1}_{K/K_0}(\varphi, \nabla) \to  \mathrm{MF}^{\mathrm{BT}}_\gs(\varphi, \nabla) \otimes_\Zp \Qp.$$
Indeed,  the functor labelled as $(\ast)$ in  Thm. \ref{323}  implies there is a fully faithful functor
$$ \mathbf{MF}^{\mathrm{wa}, 1}_{K/K_0}(\varphi, \nabla) \to  \Mod_{\gs}^1(\varphi, \nabla)\otimes_\Zp \Qp,$$
where $\Mod_{\gs}^1(\varphi, \nabla)$ is the sub-category of $\Mod_{\gs}(\varphi, N, \nabla)$ consisting of modules with $E(u)$-height $\leq 1$ and $N=0$. 
Note that the $\nabla$  for $\Mod_{\gs}^1(\varphi, \nabla)$ in Def. \ref{def39} is defined on the $K_0$-level, whereas $\nabla$ for $\mathrm{MF}^{\mathrm{BT}}_\gs(\varphi, \nabla)$ in Def. \ref{def57} is defined on the ``Frobenius-twisted" $\oko$-level. Nonetheless, we have an equivalence of isogeny categories: 
\begin{equation}
\mathrm{MF}^{\mathrm{BT}}_\gs(\varphi, \nabla) \otimes_\Zp \Qp \simeq  \Mod_{\gs}^1(\varphi, \nabla)\otimes_\Zp \Qp,
\end{equation}
by the discussions  in  Rem. \ref{r523}.

To prove the theorem, it suffices to show that $V_p$ is a  quasi-inverse of the composite in diagram \eqref{eq610}, and hence completes the   diagram. The argument in the following is the same as in \cite[Thm. 6.10]{BT08}.

Let $V\in \Rep_{\Qp}^{\cris,  1}(G_K)$, and suppose it maps to $D, \gm, M, G$ in the corresponding categories using the functors above (where $\gm, M, G$ are constructed up to isogeny). 
\begin{equation}\label{eq622}
\begin{tikzcd}
G                     &                       & M \arrow[ll, maps to] \\
V \arrow[rd, maps to] &                       & \gm \arrow[u, maps to]  \\
                      & D \arrow[ru, maps to] &                        
\end{tikzcd}
\end{equation}

 We have the following comparisons, which are always $(\varphi, \nabla)$-equivariant and compatible with filtrations
\begin{enumerate}
\item $V \otimes_\Qp \bcris \simeq D\otimes_\ko \bcris$, since $V$ is crystalline;
\item  $\gm\otimes_{\varphi, \gs} S \simeq M$, by construction;
\item $D\otimes_{K_0}S[1/p] \simeq M[1/p]$, by Prop. \ref{p325}, where the compatibility with filtrations is discussed in the proof of \cite[Thm. 6.10]{BT08};
\item $M[1/p] \simeq \bfD^\ast(G_k)(\oko)\otimes_\oko S[1/p]$, by \eqref{eq612}.
\item $\bfD^\ast(G_k)(\oko)[1/p] \otimes_\ko \bcris \simeq V_p(G) \otimes_\Qp \bcris$ by Prop. \ref{pr63}.
\end{enumerate}
Combining all these comparisons, we get a  $(\varphi, \nabla)$-equivariant and  filtration-compatible isomorphism
\begin{equation}\label{eqcom}
V \otimes_\Qp \bcris \simeq V_p(G) \otimes_\Qp \bcris.
\end{equation} 
 Note that \emph{a priori}, we do not know if it is $G_K$-equivariant,  essentially because there is no $G_K$-action on $\gm$ or $M$. 
 However, if we equip trivial $G_\Kinfty$-actions on $\gm$ and $M$ (and on $\gs$ and $S$) along all the listed comparisons (1)-(5), then it is easy to see that \eqref{eqcom} is $G_\Kinfty$-equivariant. Taking $\varphi=1, \nabla=0$ in the $\Fil^0$ pieces of \eqref{eqcom}, we see that $V\simeq V_p(G)$ as $G_\Kinfty$-representations; they are furthermore isomorphic as $G_K$-representations by \cite[Prop. 3.8]{BT08}, which says that the restriction  functor    $$\Rep_{\Qp}^{\cris}(G_K) \to \Rep_\Qp(G_\Kinfty)$$ is fully faithful.
\end{proof}

\begin{theorem}\label{thmtp}
The functor $T_p$ from $\mathrm{BT}(\O_K)$ to $\Rep_{\Zp}^{\cris, 1}(G_K)$ is an equivalence.
\end{theorem}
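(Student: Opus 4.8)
The plan is to promote the isogeny equivalence of Theorem~\ref{thm610} to an integral one, the extra input being a matching of $G_K$-stable lattices; crucially this can be carried out purely on the Galois side, so it is uniform in $p$ and in particular survives at $p=2$, where the crystalline Dieudonn\'e functor $\mathbf{M}$ of Theorem~\ref{thm61} is only an isogeny equivalence and the connection on the associated Breuil-Kisin module need not be integral (cf.\ Rem.~\ref{rem437}). By Prop.~\ref{pr63} the functor $T_p$ takes values in $\Rep_{\Zp}^{\cris,1}(G_K)$, and by Theorem~\ref{thm610} the induced functor $V_p=T_p[1/p]$ on isogeny categories is an equivalence; hence it remains only to prove that $T_p$ is fully faithful and essentially surjective.

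Both of these I would deduce from the classical fact that, over the discrete valuation ring $\ok$, the generic-fibre functor from finite flat group schemes to finite $G_K$-modules is fully faithful with essential image closed under $G_K$-stable subquotients. Indeed a morphism of generic fibres extends to $\ok$ by taking the scheme-theoretic closure of its graph (a finite flat subgroup scheme projecting isomorphically onto the source), and a $G_K$-stable submodule of the $\overline{K}$-points is cut out by the scheme-theoretic closure of the corresponding subgroup scheme of the generic fibre; since $K$ has characteristic $0$ the relevant torsion group schemes are \'etale over $K$, and the whole argument is insensitive to the residue field, hence valid in our imperfect setting. (This is the elementary hom-extension property, not the deeper classification of Thm.~\ref{fflat}, so there is no circularity.) Faithfulness of $T_p$ is then clear: a nonzero $h\colon G\to G'$ is nonzero on some $G[p^n]$, hence on its generic fibre, hence on Tate modules. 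For fullness, a $G_K$-equivariant $f\colon T_p(G)\to T_p(G')$ reduces modulo $p^n$ to $G_K$-maps $G[p^n](\overline{K})\to G'[p^n](\overline{K})$, i.e.\ maps of the \'etale generic fibres $G[p^n]_K\to G'[p^n]_K$; these extend uniquely and compatibly to $G[p^n]\to G'[p^n]$, and in the limit yield $h\colon G\to G'$ with $T_p(h)=f$.

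For essential surjectivity, let $T\in\Rep_{\Zp}^{\cris,1}(G_K)$ and set $V=T[1/p]$. By Theorem~\ref{thm610} there is $G_0\in\BT(\ok)$ with $V_p(G_0)\simeq V$, so $T$ and $T_p(G_0)$ are two $G_K$-stable lattices in $V$; after scaling I may assume $T_p(G_0)\subseteq T\subseteq p^{-m}T_p(G_0)$. Then $T/T_p(G_0)$ is a $G_K$-stable submodule of $p^{-m}T_p(G_0)/T_p(G_0)\simeq G_0[p^m](\overline{K})$, hence (by the closure mechanism above) the group of $\overline{K}$-points of a finite flat subgroup scheme $H\subseteq G_0[p^m]$. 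The quotient isogeny $G_0\to G:=G_0/H$ identifies $V_p(G)$ with $V$ and realises $T_p(G)\simeq T$; since $T_p(G)[1/p]=V$ is crystalline with Hodge-Tate weights in $\{0,1\}$, the object $T$ lies in the essential image of $T_p$.

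The real content, and the only place where anything is genuinely hard, is Theorem~\ref{thm610} itself (which does hold at $p=2$, being an isogeny statement assembled from functors that are all equivalences or isogeny equivalences); the remaining lattice bookkeeping is formal. I expect the main subtlety to be ensuring the classical finite-flat facts are invoked correctly in the imperfect-residue-field case and, conceptually, recognising that the Galois-theoretic lattice argument is exactly what \emph{sidesteps} the integral-connection problem at $p=2$. As an alternative, entirely module-theoretic route one can instead match the minuscule Breuil-Kisin module $\gm$ attached to $T$ by the integral theory of \S\ref{sub43} (via Lem.~\ref{45} and Thm.~\ref{thm411}) with $\cm_\gs^1(\gm)=S\otimes_{\varphi,\gs}\gm\simeq\mathbf{M}(G)$, using Prop.~\ref{p325} together with \eqref{eq611}--\eqref{eq612} and Prop.~\ref{pr63}; there the technical heart becomes upgrading these comparisons from the isogeny level to an integral isomorphism of lattices, which is precisely the work the Galois-side argument avoids. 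This bridge is what Thm.~\ref{68} later exploits to install the connection at $p=2$ (cf.\ Rem.~\ref{r116}(3)).
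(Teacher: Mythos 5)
Your essential surjectivity argument is correct and is in fact the same lattice-adjustment the paper uses (it follows \cite[Thm. 2.2.1]{Liu13}, with \cite[Prop. 2.3.1]{Ray74} justifying the scheme-theoretic closures and quotients over $\ok$, as the paper notes). But your full faithfulness argument contains a genuine error. The ``classical fact'' you invoke --- that the generic-fibre functor from finite flat group schemes over $\ok$ to finite $G_K$-modules is \emph{fully} faithful --- is false whenever the absolute ramification index satisfies $e \geq p-1$. The standard counterexample: over $\Zp[\zeta_p]$ (or, at $p=2$, already over $\Z_2$, where $e=1=p-1$), the generic fibres of $\mu_p$ and $\Z/p$ are isomorphic, but $\Hom_{\ok}(\mu_p, \Z/p)=0$ since $\mu_p$ is connected and $\Z/p$ is \'etale over the Henselian local base. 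Your parenthetical mechanism is exactly where this breaks: the scheme-theoretic closure $\Gamma$ of the graph of $f_K$ in $G\times G'$ is indeed a finite flat subgroup scheme, but its projection to the source is only a \emph{generic} isomorphism, not an isomorphism over $\ok$ (in the example above, $\Gamma \simeq \Z/p$ and $\mathrm{pr}_1 \colon \Z/p \to \mu_p$ is the generic isomorphism determined by $\zeta_p$, which is not invertible integrally). Consequently your levelwise claim that the maps $G[p^n]_K \to G'[p^n]_K$ ``extend uniquely and compatibly to $G[p^n]\to G'[p^n]$'' fails, and it fails precisely at $p=2$, the case you single out as handled uniformly by this route. (Closure under $G_K$-stable \emph{subobjects}, which is all your essential surjectivity step needs, is fine; it is fullness that is lost.)

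What rescues fullness of $T_p$ is that the defect at each finite level is bounded and disappears in the limit: this is Tate's theorem \cite[p. 181, Cor. 1]{Tat67}, that $\Hom(G,G') \to \Hom_{\Zp[G_K]}(T_p(G), T_p(G'))$ is bijective for $p$-divisible groups over an integrally closed Noetherian integral domain with characteristic-zero fraction field --- a genuinely hard discriminant-bounding argument, not the elementary extension you propose. This is exactly the input the paper cites, together with the verification that $\ok$ (with imperfect residue field) satisfies Tate's hypotheses. So the correct repair of your proof is to replace your second paragraph by an appeal to Tate's theorem, after which the remaining structure of your argument (values in $\Rep_{\Zp}^{\cris,1}(G_K)$ via Prop. \ref{pr63}, isogeny equivalence via Thm. \ref{thm610}, lattice adjustment via closures and quotients) coincides with the paper's proof.
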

\begin{proof} 
With Thm. \ref{thm610} (the analogue of \cite[Cor. 2.2.6]{Kis06}) established, the proof follows the same argument  in \cite[Thm. 2.2.1]{Liu13}.
 In particular, Tate's isogeny theorem \cite[p. 181, Cor. 1]{Tat67} is still applicable because $\O_K$ is an integrally closed, Noetherian, integral domain with $\mathrm{char } K=0$ as required in \cite[p. 180, Thm. 4]{Tat67}. Also, \cite[Prop. 2.3.1]{Ray74} is still applicable because $\O_K$ is a mixed characteristic discrete valuation ring as required in the beginning of \cite[p. 259, \S 2]{Ray74} and \cite[p. 261, \S 2.3]{Ray74}.
\end{proof}

\begin{rem}\label{r625}
It is far from clear if the \emph{relative} version of Thm.  \ref{thm610} (let alone Thm. \ref{thmtp}) should still hold. Nonetheless, recently, Liu and Moon \cite{LiuMoon} have shown  that these results hold when the ``ramification index" $e$ in Assumption \ref{assr}(1) satisfies $e<p-1$.
\end{rem}



\subsection{$p$-divisible groups and Breuil-Kisin modules}\label{sub63}
The content of Thm. \ref{thm610} completes the diagram  \eqref{eq610}; hence we obtain the following diagram (with top arrow direction reversed) of \emph{equivalences of categories}:
 
\begin{equation}\label{tik2}
 \begin{tikzcd}
\mathrm{BT}(\O_K)\otimes_\Zp \Qp \arrow[d] \arrow[rr] &                                                             & {\mathrm{MF}^{\mathrm{BT}}_S(\varphi, \nabla) \otimes_\Zp \Qp}             \\
{\Rep_{\Qp}^{\cris,  1}(G_K) } \arrow[rd]             &                                                             & {\mathrm{MF}^{\mathrm{BT}}_\gs(\varphi, \nabla) \otimes_\Zp \Qp} \arrow[u] \\
                                                      & {\mathbf{MF}^{\mathrm{wa}, 1}_{K/K_0}(\varphi, \nabla)} \arrow[ru] &                                                                           
\end{tikzcd}
\end{equation}

All the categories in \eqref{tik2} except the bottom one have \emph{integral} avatars, and the content of this subsection is to complete the following diagram of integral categories:
\begin{equation}\label{tik3}
 \begin{tikzcd}
\mathrm{BT}(\O_K) \arrow[d] \arrow[rr]            &  & {\mathrm{MF}^{\mathrm{BT}}_S(\varphi, \nabla)  }             \\
{\Rep_{\Zp}^{\cris,  1}(G_K) } \arrow[rr, dotted] &  & {\mathrm{MF}^{\mathrm{BT}}_\gs(\varphi, \nabla)  } \arrow[u]
\end{tikzcd}
\end{equation}
In particular, we will show that the (to-be-constructed)   dotted arrow is an equivalence of categories: this implies that  $\mathrm{MF}^{\mathrm{BT}}_\gs(\varphi, \nabla) $ is equivalent to $\mathrm{BT}(\O_K).$

\begin{prop}\label{66}
Let $T \in \Rep_{\Zp}^{\cris,  1}(G_K)$, and let $G\in \BT(\ok)$ be the corresponding $p$-divisible group via Thm. \ref{thmtp}.
Let $\gm(T)$ be the Breuil-Kisin module attached to $T$ via  Thm. \ref{thm411}. Then we have a $\varphi$-equivariant isomorphism 
$$\gm(T)\otimes_{\varphi, \gs}S  \simeq \bfm(G).$$
\end{prop}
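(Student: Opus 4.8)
The plan is to first produce the isomorphism after inverting $p$, and then upgrade it to an integral isomorphism by descent from the perfect residue field case. For the isogeny statement, set $V = T[1/p]$ and $D = D_\cris(V)$; since $T$ is crystalline with Hodge-Tate weights in $\{0,1\}$, the Breuil-Kisin module $\gm = \gm(T)$ has $E(u)$-height $\leq 1$, so that $\gm(T)\otimes_{\varphi,\gs} S = \cm_\gs^1(\gm(T))$. Proposition \ref{p325} supplies a $\varphi$-equivariant isomorphism $D\otimes_\ko S[1/p] \simeq \gm(T)\otimes_{\varphi,\gs} S[1/p]$. On the other side, since $T = T_p(G)$ and $V = V_p(G)$ by Thm. \ref{thmtp}, Proposition \ref{pr63} identifies $D = D_\cris(V_p(G)) = \bfD^\ast(G\otimes_\ok k)(\oko)[1/p]$, while \eqref{eq612} gives $\bfm(G)[1/p]\simeq \bfD^\ast(G\otimes_\ok k)(\oko)\otimes_\oko S[1/p]$. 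Composing these canonical isomorphisms yields a $\varphi$-equivariant isomorphism $\alpha\colon \gm(T)\otimes_{\varphi,\gs} S[1/p] \xrightarrow{\sim} \bfm(G)[1/p]$. Thus both $L_1 := \alpha(\gm(T)\otimes_{\varphi,\gs} S)$ and $L_2 := \bfm(G)$ are $S$-lattices in $W := \bfm(G)[1/p]$, and it remains to prove $L_1 = L_2$.

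To establish integrality I would base change to $\bbk$ and invoke the perfect residue field case. Since $\pi$ is a uniformizer of both $K$ and $\bbk$, the extension $\bbk/K$ is unramified, the map $\ok\hookrightarrow \mathcal{O}_\bbk$ is the unramified extension of discrete valuation rings inducing $k_K\hookrightarrow \mathbf{k}$, and $E(u)$ remains the minimal polynomial of $\pi$ over $\bbk_0$. Consequently $\gs\hookrightarrow \gs_\bbk$ and $S\hookrightarrow S_\bbk$ are faithfully flat, being obtained from the faithfully flat local map $\oko\hookrightarrow W(\mathbf{k})$ (flatness is checked modulo $p$, where $\mathbf{k}$ is a filtered colimit of finite, hence flat, extensions of $k_K$). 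Let $G_{\mathcal{O}_\bbk} = G\times_{\ok}\mathcal{O}_\bbk$, with covariant Tate module $T_p(G_{\mathcal{O}_\bbk}) = T|_{G_\bbk}$. Its Breuil-Kisin module $\gm(G_{\mathcal{O}_\bbk})$ (Kisin for $p>2$, Liu \cite{Liu13} for $p=2$; the perfect residue field case of Thm. \ref{thm3}) satisfies $T_{\gs_\bbk}(\gm(G_{\mathcal{O}_\bbk})) \simeq T_p(G_{\mathcal{O}_\bbk})|_{G_\bbkinfty} = T|_{G_\bbkinfty}$. On the other hand $\gm(T)_\bbk := \gm(T)\otimes_\gs \gs_\bbk$ satisfies $T_{\gs_\bbk}(\gm(T)_\bbk) = T_\gs(\gm(T)) = T|_{G_\bbkinfty}$ by the construction of $\gm(T)$ in Thm. \ref{thm411}; hence $\gm(G_{\mathcal{O}_\bbk}) = \gm(T)_\bbk$ by full faithfulness of $T_{\gs_\bbk}$ (Prop. \ref{prop42}, Prop. \ref{prop43}). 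The perfect residue field case then gives an \emph{integral} identification $\cm_{\gs_\bbk}^1(\gm(T)_\bbk) = \cm_{\gs_\bbk}^1(\gm(G_{\mathcal{O}_\bbk})) = \bfm(G_{\mathcal{O}_\bbk})$, whereas crystal base change along the compatible PD-thickenings $(S\to\ok) \to (S_\bbk\to \mathcal{O}_\bbk)$ gives $\bfm(G)\otimes_S S_\bbk = \bfm(G_{\mathcal{O}_\bbk})$.

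Because all the isomorphisms entering $\alpha$ (Prop. \ref{p325}, Prop. \ref{pr63}, and \eqref{eq612}) are functorial, the base change $\alpha_\bbk$ of $\alpha$ along $S[1/p]\to S_\bbk[1/p]$ coincides with the corresponding canonical comparison for $G_{\mathcal{O}_\bbk}$; by the previous paragraph this perfect-case comparison carries the integral lattice $\cm_{\gs_\bbk}^1(\gm(T)_\bbk) = L_1\otimes_S S_\bbk$ isomorphically onto $\bfm(G_{\mathcal{O}_\bbk}) = L_2\otimes_S S_\bbk$. Hence $L_1\otimes_S S_\bbk = L_2\otimes_S S_\bbk$ as lattices in $W\otimes_S S_\bbk$, and faithfully flat descent along $S\hookrightarrow S_\bbk$ (which commutes with the finite intersection of the finitely generated submodules $L_1, L_2\subset W$) forces $L_1 = L_2$. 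This gives the desired $\varphi$-equivariant isomorphism $\gm(T)\otimes_{\varphi,\gs} S \simeq \bfm(G)$.

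I expect the main obstacle to lie in this last transfer of integrality from $\bbk$ back to $K$: one must verify that the functorially defined isogeny isomorphism $\alpha$ genuinely base changes to the perfect-case comparison (so that the two \emph{lattices}, and not merely abstractly isomorphic modules, are matched under $\alpha_\bbk$), together with the flatness and crystal-base-change compatibilities asserted above. This is precisely where the $p=2$ difficulty is absorbed: the only integral input specific to $p=2$ is Liu's perfect-residue-field classification, and the role of the integral theory of \S\ref{sec4}—the functor $T\mapsto \gm(T)$ of Thm. \ref{thm411}—is exactly to pin down $\gm(T)$ through its restriction to $G_\bbkinfty$, so that the key identification $\gm(G_{\mathcal{O}_\bbk}) = \gm(T)_\bbk$ can be read off from Galois-theoretic data.
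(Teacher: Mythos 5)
Your proposal is correct and follows essentially the same route as the paper's own proof: restrict to $G_\bbk$ and invoke the perfect-residue-field comparison (\cite[Prop. 2.4]{Kim12}, valid for all $p$) to get the integral isomorphism after base change $S \to S_\bbk$, obtain the rational isomorphism from the isogeny comparisons (Prop. \ref{p325}, Prop. \ref{pr63}, and \eqref{eq612}, i.e.\ the equivalences in \eqref{tik2}), and conclude via $S_\bbk \cap S[1/p]=S$. Your write-up is in fact somewhat more careful than the paper's, as you make explicit both the identification $\gm(G_{\mathcal{O}_\bbk})=\gm(T)\otimes_\gs\gs_\bbk$ via full faithfulness of $T_{\gs_\bbk}$ and the compatibility of the canonical comparison maps under base change, which the paper leaves implicit.
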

\begin{proof}
When $K$ has \emph{perfect} residue field, this is \cite[Prop. 2.4]{Kim12}.
In the general case, restricting the representation to $G_\bbk$ implies
$$(\gm(T)\otimes_{ \gs} \gs_\bbk)  \otimes_{\varphi, \gs_\bbk} S_\bbk\simeq \bfm(G)\otimes_S S_\bbk,$$
where $S_\bbk$ is the ``$\bbk$-version" of $S$ defined in \S \ref{sec5}.
Note that the objects in \eqref{eq622} correspond to each other, \emph{up to isogeny}, via the equivalences of categories in \eqref{tik2}, hence we must have
$$\gm(T)\otimes_{\varphi, \gs}S[1/p]  \simeq \bfm(G)\otimes_S S[1/p].$$
We can conclude using the fact that $S_\bbk \cap S[1/p]=S$.
\end{proof}

\begin{prop}\label{67}
There exists a natural functor 
$$\underline{\gm}: \Rep_{\Zp}^{\cris,  1}(G_K) \to \mathrm{MF}^{\mathrm{BT}}_\gs(\varphi, \nabla),$$
which makes \eqref{tik3} commutative, and which after $\otimes_\Zp \Qp$ fits into \eqref{tik2}.
\end{prop}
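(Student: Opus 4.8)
The plan is to build $\underline{\gm}$ by combining the integral functor of Thm.~\ref{thm411} with the Dieudonn\'e-theoretic comparison of Prop.~\ref{66}. Given $T\in\Rep_{\Zp}^{\cris,1}(G_K)$, I view it inside $\Rep_{\Zp}^{\st,\geq 0}(G_K)$ and apply Thm.~\ref{thm411} to obtain its Breuil--Kisin $G_K$-module; let $\gm=\gm(T)\in\Mod_\gs^\varphi$ be the underlying $\gs$-module. Since $V=T[1/p]$ is crystalline with Hodge--Tate weights in $\{0,1\}$, the associated Fontaine module has $N=0$ and filtration jumps only in $\{0,1\}$, so $\gm$ is of $E(u)$-height $\leq 1$, i.e. $(\gm,\varphi)\in\Mod_\gs^{\varphi,1}$. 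It remains to equip $\gm$ with the connection datum of Def.~\ref{def57} and to verify the two compatibilities.

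For the connection, the key point is that I do \emph{not} try to differentiate the Galois representation directly --- which is exactly the difficulty flagged in Rem.~\ref{rem437} --- but instead import an honest integral connection from the Dieudonn\'e crystal of the associated $p$-divisible group. Let $G\in\BT(\ok)$ be the $p$-divisible group with $T_p(G)\simeq T$ (Thm.~\ref{thmtp}). Reducing the $\varphi$-equivariant isomorphism $\gm\otimes_{\varphi,\gs}S\simeq\bfm(G)$ of Prop.~\ref{66} modulo $I_u$, and using both the identity $\bfm(G)/I_u\bfm(G)=\bfD^\ast(G\otimes_\ok k)(\oko)$ of \eqref{eq611} and the tautological identification $(S\otimes_{\varphi,\gs}\gm)/I_u\simeq\varphi^\ast(\gm/u\gm)$ built into Construction~\ref{cons52} (this is the ``Frobenius twist'' of Rem.~\ref{r523}), one obtains a canonical $\oko$-linear, $\varphi$-equivariant isomorphism $\varphi^\ast(\gm/u\gm)\simeq\bfD^\ast(G_k)(\oko)$. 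I then \emph{define} $\nabla$ on $\varphi^\ast(\gm/u\gm)$ by transporting the connection that $\bfD^\ast(G_k)(\oko)$ carries as part of the $\mathrm{MF}^{\mathrm{BT}}_S(\varphi,\nabla)$-structure of $\bfm(G)$ (Thm.~\ref{thm61}). Quasi-nilpotence, integrability and commutation with $\varphi$ are inherited verbatim through this $\oko$-linear $\varphi$-equivariant isomorphism, so $\underline{\gm}(T):=(\gm,\varphi,\nabla)$ lies in $\mathrm{MF}^{\mathrm{BT}}_\gs(\varphi,\nabla)$; functoriality in $T$ follows because every ingredient (Thm.~\ref{thm411}, Thm.~\ref{thmtp}, Prop.~\ref{66}) is functorial.

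Commutativity of \eqref{tik3} is then essentially built into the construction. The ``Frobenius twist'' in Def.~\ref{def57} is designed precisely so that $\nabla$ on $\varphi^\ast(\gm/u\gm)$ corresponds, under $\BT_\gs^S$ (cf.~\eqref{eqf52} and Rem.~\ref{r523}), to the connection on $M/I_uM$; hence $\BT_\gs^S(\underline{\gm}(T))$ and $\bfm(G)$ carry the same connection by the very definition of $\nabla$. Their underlying $S$-modules and Frobenii agree by Prop.~\ref{66}, and for height $\leq 1$ objects the submodule $\Fil^1$ and the divided Frobenius $\varphi_1$ of Construction~\ref{cons52} are functorially determined by the pair $(S\otimes_{\varphi,\gs}\gm,\varphi)$, so the $\varphi$-equivariant isomorphism of Prop.~\ref{66} upgrades to an isomorphism $\BT_\gs^S(\underline{\gm}(T))\simeq\bfm(G)$ in $\mathrm{MF}^{\mathrm{BT}}_S(\varphi,\nabla)$. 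Finally, to see that $\underline{\gm}\otimes_\Zp\Qp$ fits into \eqref{tik2}, I note that $\gm(T)[1/p]$ is, by its construction in Thm.~\ref{thm411}, the Breuil--Kisin module attached to $V$ by the rational functor $(\ast)$ of Thm.~\ref{323} applied to $D_\cris(V)$; thus $\underline{\gm}(T)[1/p]$ agrees with the image of $V$ under the slanted composite of \eqref{tik2}.

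The main obstacle --- and the reason the statement is not purely formal --- is producing a connection that is genuinely \emph{integral} and functorial, most delicately when $p=2$. The route above sidesteps Rem.~\ref{rem437} by extracting $\nabla$ from $\bfD^\ast(G_k)(\oko)$ rather than from $T$, so integrality comes for free; the price is that one must pass through $\BT(\ok)$ and invoke Thm.~\ref{thmtp}. When $p=2$ the functor $\BT_\gs^S$ of Cor.~\ref{c524} is only essentially surjective and an equivalence on isogeny categories, so one cannot simply \emph{define} $\underline{\gm}$ by inverting it; it is the direct construction of $(\gm,\varphi,\nabla)$ above that makes the $p=2$ case work, and the genuinely load-bearing step is the integral upgrade of Prop.~\ref{66} to an isomorphism respecting $\Fil^1$ and $\varphi_1$ in $\mathrm{MF}^{\mathrm{BT}}_S(\varphi,\nabla)$.
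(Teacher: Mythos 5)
Your proposal is correct and follows essentially the same route as the paper: the paper likewise obtains $\gm(T)$ from Thm.~\ref{thm411}, takes the $p$-divisible group $G$ via Thm.~\ref{thmtp}, and transports the connection on $\bfm(G)/I_u\bfm(G)$ through the isomorphism $\gm(T)\otimes_{\varphi,\gs}S\simeq\bfm(G)$ of Prop.~\ref{66} to define $\nabla$ on $\varphi^\ast(\gm(T)/u\gm(T))$. Your additional verifications (height $\leq 1$, compatibility of $\Fil^1$ and $\varphi_1$, the rational comparison with \eqref{tik2}) are details the paper leaves implicit in its ``obviously defines the desired functor,'' so you have simply spelled out the same argument more fully.
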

\begin{proof}
Let $T \in \Rep_{\Zp}^{\cris,  1}(G_K)$, and let $G\in \BT(\ok)$ be the corresponding $p$-divisible group.
By Prop. \ref{66}, $\gm(T)\otimes_{\varphi, \gs}S  \simeq \bfm(G)$, hence the $\nabla$-operator on $\bfm(G)/I_u\bfm(G)$ induces a $\nabla$-operator on $\varphi^\ast(\gm(T)/u\gm(T))$. This gives rise to an object in $\mathrm{MF}^{\mathrm{BT}}_\gs(\varphi, \nabla)$, and obviously defines the desired  functor.
\end{proof}

\begin{theorem}\label{68}
The functor $\underline{\gm}$ is an equivalence, and hence we have equivalences of categories:
$$ \BT(\ok)\xrightarrow{T_p} \Rep_{\Zp}^{\cris,  1}(G_K) \xrightarrow{\underline{\gm}}  \mathrm{MF}^{\mathrm{BT}}_\gs(\varphi, \nabla) $$
\end{theorem}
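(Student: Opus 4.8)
The plan is to prove that the diagonal functor $\underline{\gm}$ of \eqref{tik3} is an equivalence; since $T_p$ is already an equivalence by Thm. \ref{thmtp}, this is exactly what completes the displayed chain. When $p>2$ there is nothing to do beyond bookkeeping: $\BT_\gs^S$ (Cor. \ref{c524}), $\mathbf{M}$ (Thm. \ref{thm61}) and $T_p$ (Thm. \ref{thmtp}) are all equivalences, so the commutativity of \eqref{tik3} from Prop. \ref{67} forces $\underline{\gm}$ to be one as well. The real content is the case $p=2$, where $\mathbf{M}$ and $\BT_\gs^S$ are equivalences only after inverting $p$; there I would establish full faithfulness and essential surjectivity of $\underline{\gm}$ by hand, using the integral theory of \S\ref{sec4}. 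In fact the three arguments below work uniformly in $p$, so I would simply present them that way.

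For faithfulness, recall that the underlying Breuil-Kisin module $\gm(T)$ of $\underline{\gm}(T)$ (furnished by Thm. \ref{thm411}) satisfies $T_\gs(\gm(T))=T|_{G_{\bbkinfty}}$ and that $T_\gs$ is functorial; since the restriction functor $\Rep_{\Zp}(G_K)\to\Rep_{\Zp}(G_{\bbkinfty})$ is faithful (it does not change the underlying $\Zp$-linear map), a morphism killed by $\underline{\gm}$ becomes zero after restriction to $G_{\bbkinfty}$, hence is zero. For fullness, given $\psi\colon\underline{\gm}(T_1)\to\underline{\gm}(T_2)$, the isogeny equivalence $\underline{\gm}\otimes_\Zp\Qp$ (which fits into the diagram of equivalences \eqref{tik2}, by Prop. \ref{67}) produces $f\in\Hom(T_1,T_2)[1/p]$ with $\underline{\gm}(f)[1/p]=\psi[1/p]$, and the only issue is integrality of $f$. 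Restricting to $G_{\bbkinfty}$ and using the full faithfulness of $T_\gs$ (Prop. \ref{prop43}), the map $f|_{G_{\bbkinfty}}$ is identified with $T_\gs(\psi)[1/p]$; but $T_\gs(\psi)$ is an integral morphism because $\psi$ is, so $f|_{G_{\bbkinfty}}$ lands in the integral $\Hom$. As $f$ and $f|_{G_{\bbkinfty}}$ share the same underlying $\Zp$-linear map, $f$ is integral.

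The main obstacle is essential surjectivity: given $\mathfrak{N}\in\mathrm{MF}^{\mathrm{BT}}_\gs(\varphi,\nabla)$ with underlying module $\gm_0$, one must realize it as $\underline{\gm}(T)$. The isogeny equivalence yields $V\in\Rep_{\Qp}^{\cris,1}(G_K)$ corresponding to $\mathfrak{N}[1/p]$, and by Prop. \ref{prop46} together with Lem. \ref{45} the lattice $L_0:=T_\gs(\gm_0)$ is a $G_{\bbkinfty}$-stable lattice in $V|_{G_{\bbkinfty}}$; the task is to promote it to a $G_K$-stable lattice. The key idea is to base change along $\gs\hookrightarrow\gs_\bbk$: the object $\gm_0\otimes_\gs\gs_\bbk$ is a minuscule Breuil-Kisin module over the perfect-residue-field ring $\gs_\bbk$, so by the classical $p$-divisible group theory over $\O_\bbk$ (valid for all $p$, cf. Rem. \ref{r116}(1)) it corresponds to an integral crystalline $G_\bbk$-representation $T_\bbk$, which is a $G_\bbk$-stable lattice in $V|_{G_\bbk}$ and whose restriction to $G_{\bbkinfty}$ is precisely $L_0$. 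Thus $L_0$ is simultaneously $G_{\bbkinfty}$-stable and $G_\bbk$-stable inside $V$; since $G_\bbk$ and $G_{\bbkinfty}$ generate $G_K$ (Lem. \ref{lemsubgen}), $L_0$ is $G_K$-stable, and $T:=L_0\in\Rep_{\Zp}^{\cris,1}(G_K)$.

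It then remains to check $\underline{\gm}(T)\cong\mathfrak{N}$. By the bijectivity in Lem. \ref{45}, $\gm(T)$ is the $\varphi$-stable lattice corresponding to $T|_{G_{\bbkinfty}}=L_0$, namely $\gm_0$ itself; and the two connections on $\varphi^\ast(\gm_0/u\gm_0)$ agree after inverting $p$ (both being induced from $V$) hence agree integrally, since that module is $p$-torsion free. The delicate points I expect to need care with are exactly those supporting the generation step: verifying that the $G_\bbk$- and $G_{\bbkinfty}$-actions on $L_0$ are both restrictions of the single $G_K$-action on $V$ (so that "$G_\bbk$ and $G_{\bbkinfty}$ generate $G_K$" genuinely applies to stability of the lattice), and the compatibility of the base-changed module $\gm_0\otimes_\gs\gs_\bbk$, after inverting $p$, with the Breuil-Kisin module of $V|_{G_\bbk}$. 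This lattice-extension argument, rather than any module computation, is the heart of the $p=2$ case.
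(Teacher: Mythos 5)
Your reduction for $p>2$, your Part 1 (faithfulness via $T_\gs$, fullness by producing the rational map from the isogeny equivalence and checking integrality on lattices), and your final connection comparison (two connections on $\varphi^\ast(\gm_0/u\gm_0)$ that agree after inverting $p$ agree integrally, by $p$-torsion-freeness) are all essentially sound and parallel the paper's proof; for fullness the paper checks integrality after base change to $\gs_\bbk$ via Thm. \ref{699}, while your route through functoriality of $T_\gs$ (Prop. \ref{prop43}) is a harmless variant, and your torsion-freeness argument replaces the paper's second application of Lem. \ref{lem6}.

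However, your essential surjectivity argument has a genuine gap at its central step. Lemma \ref{lemsubgen} states that $G_\bbk$ and $G_{\Kinfty}$ generate $G_K$ --- not $G_\bbk$ and $G_{\bbkinfty}$. Since $\bbk_\infty \supset \bbk$, we have $G_{\bbkinfty} \subset G_\bbk$, so the two stabilities you establish for $L_0$ ($G_\bbk$-stability via Thm. \ref{699}/Thm. \ref{thmgao}, and $G_{\bbkinfty}$-stability via Prop. \ref{prop46} and Lem. \ref{45}) generate only $G_\bbk$-stability, and $G_\bbk \neq G_K$ whenever the residue field is imperfect. What is actually needed --- and what constitutes the hard point of the $p=2$ case --- is the $G_{\Kinfty}$-stability of $L_0$ inside $V$, equivalently that $G_{\Kinfty}$ acts trivially on $\gm_0[1/p]$ under the action transported from $V\otimes_\Qp \wtb$. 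This is not formal: it is Prop. \ref{p434}, whose proof requires the comparison $D\otimes_{\ko}S[1/p]\simeq \gm\otimes_{\varphi,\gs}S[1/p]$ of Prop. \ref{p325} together with the overconvergence embeddings $\iota_n: \wtb_{[r_n,+\infty]}\into \bdr^{\nabla +}$, and your proposal never invokes it nor offers a substitute. Note also that the paper takes care to apply Prop. \ref{p434} to $\gn=\gm(L)$ for an auxiliary \emph{integral} representation $L$ with $\gm \into \gn$ an isogeny, precisely so that the hypothesis ``comes from a semi-stable representation'' is available, and then transports triviality of the $G_{\Kinfty}$-action to $\gm[1/p]=\gn[1/p]$. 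Once this ingredient is inserted (applying Prop. \ref{p434} to your $V$, whose rational Kisin module is $\gm_0[1/p]$ up to isogeny), your lattice-extension argument closes exactly as in the paper: the stabilizer of a lattice is a closed subgroup, so topological generation by $G_\bbk$ and $G_{\Kinfty}$ suffices.
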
 
\begin{rem} \label{re633}
\begin{enumerate}
\item When $p>2$, the theorem follows by combining Thm. \ref{thm61} (due to Brinon-Trihan) and   Cor. \ref{c524} (which is essentially Thm. \ref{thm53}, and  is due to Caruso-Liu). Our argument here works for any $p$, using only the equivalence up to isogeny in Thm. \ref{thm61} as input; namely, we avoid using Thm. \ref{thm53}.

\item Suppose $R$ satisfies Assumption \ref{assr}(ii) and $p>2$.
As mentiond in Rem. \ref{r525} and Rem. \ref{r613} respectively, \cite[Prop. 6.6]{Kim15} and \cite[Thm. 3.17]{Kim15} prove  the relative version of  Cor. \ref{c524} and Thm. \ref{thm61} respectively; hence   there is an equivalence of categories
$$\BT(R)\xrightarrow{\simeq}  \mathrm{MF}^{\mathrm{BT}}_{\gs_R}(\varphi, \nabla) $$
in this context. Note however the category $\Rep_{\Zp}^{\cris,  1}(G_R)$ is not involved in this equivalence, cf. Rem. \ref{r625}.
\end{enumerate}
\end{rem}

First, recall that Thm. \ref{68} holds in the perfect residue field case.
\begin{theorem} \label{699}
 We have equivalences of categories:
$$ \BT(\O_\bbk)\to \Rep_{\Zp}^{\cris,  1}(G_\bbk) \to \mathrm{MF}^{\mathrm{BT}}_{\gs_\bbk}(\varphi). $$
\end{theorem}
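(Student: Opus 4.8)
Since $\bbk$ is a complete discrete valuation field of mixed characteristic with a \emph{perfect} residue field, Theorem \ref{699} is nothing but the classical classification of $p$-divisible groups in integral $p$-adic Hodge theory, and the plan is to deduce it by assembling results already available in the literature. First note that, because $\wh{\Omega}_{\bbk_0}=0$, the category $\mathrm{MF}^{\mathrm{BT}}_{\gs_\bbk}(\varphi)$ carries no connection datum and is simply the category of minuscule (that is, $E(u)$-height $\leq 1$) Breuil-Kisin modules over $\gs_\bbk$; this is exactly the category appearing in the classical theory. Note that the whole chain is the perfect-residue-field instance of Theorem \ref{68}, so it must be established directly from external results rather than by invoking Theorem \ref{68} (on pain of circularity, since Theorem \ref{68} is deduced from the present statement).

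The strategy is to factor both arrows through the common middle category of minuscule Breuil-Kisin modules. For the second functor $\Rep_{\Zp}^{\cris, 1}(G_\bbk)\to \mathrm{MF}^{\mathrm{BT}}_{\gs_\bbk}(\varphi)$, I would cite Kisin's classification of $G_\bbk$-stable lattices inside crystalline representations by Breuil-Kisin modules, specialized to Hodge-Tate weights in $\{0,1\}$ (equivalently height $\leq 1$); for $p>2$ this is an equivalence by \cite{Kis06}. For the first functor $T_p\colon \BT(\O_\bbk)\to \Rep_{\Zp}^{\cris, 1}(G_\bbk)$, I would combine Kisin's equivalence between $\BT(\O_\bbk)$ and minuscule Breuil-Kisin modules with the lattice classification just quoted; the covariant Tate module of the $p$-divisible group attached to a module $\gm$ agrees with the crystalline representation attached to $\gm$, so the two functors are mutually compatible through the middle category and the composite is the asserted chain. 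The main bookkeeping step is to check this compatibility of the Tate-module and Breuil-Kisin functors, which is part of the classical package.

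The one genuinely delicate point, and the main obstacle, is the prime $p=2$, for which Kisin's arguments in \cite{Kis06} do not apply. Here I would invoke the independent classifications of Kim \cite{Kim12}, Lau \cite{Lau14} and Liu \cite{Liu13}, which establish both the equivalence $\BT(\O_\bbk)\simeq \mathrm{MF}^{\mathrm{BT}}_{\gs_\bbk}(\varphi)$ and the crystalline-lattice classification over $\bbk$ when $p=2$. As $\bbk$ has perfect residue field throughout, no further input is needed beyond these cited results, and the chain of equivalences in Theorem \ref{699} follows.
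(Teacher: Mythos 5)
Your proposal is correct and matches the paper's own proof, which consists precisely of citing Kisin \cite{Kis06} for $p>2$ and the independent works of Kim \cite{Kim12}, Lau \cite{Lau14} and Liu \cite{Liu13} for $p=2$. Your additional remarks --- that $\mathrm{MF}^{\mathrm{BT}}_{\gs_\bbk}(\varphi)$ carries no connection since $\wh{\Omega}_{\bbk_0}=0$, and that invoking Theorem \ref{68} would be circular --- are accurate elaborations of the same citation-based argument.
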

\begin{proof}
This is due to Kisin \cite{Kis06} when $p>2$, and independently to  Kim \cite{Kim12}, Lau \cite{Lau14} and Liu \cite{Liu13} when $p=2$.
\end{proof}

We start with an easy lemma on connections.
\begin{lemma} \label{lem6}
Let
$$F:  (\gm, \nabla_1) \to  (\gm_2, \nabla_1)$$ 
be a morphism in $\mathrm{MF}^{\mathrm{BT}}_\gs(\varphi, \nabla)$.
Suppose the restricted morphism $f: \gm_1 \to \gm_2$ in $\Mod_\gs^\varphi$ is  injective, and induces an isomorphism in $\Mod_\gs^\varphi \otimes_\Zp \Qp$. 
Then $f$ induces an injection
$$\gm_1/u\gm_1  \into \gm_2/u\gm_2, \text{ and hence } \varphi^\ast(\gm_1/u\gm_1) \into   \varphi^\ast(\gm_2/u\gm_2).$$ 
Furthermore, $\nabla_2$ on  $\varphi^\ast(\gm_2/u\gm_2)$ \emph{induces} $\nabla_1$ on $\varphi^\ast(\gm_1/u\gm_1).$
\end{lemma}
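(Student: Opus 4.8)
The plan is to establish the two injectivity statements by a determinant computation over the discrete valuation ring $\oko$, and then to read off the connection compatibility directly from the definition of a morphism together with that injectivity. Throughout I will write $\iota$ for the various induced maps and reduce everything to linear algebra over $\oko$.

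First I would reduce modulo $u$. Since $\gm_1,\gm_2$ are finite free over $\gs=\oko[[u]]$, the quotients $\gm_i/u\gm_i$ are finite free $\oko$-modules; write $r_i$ for their ranks. The hypothesis that $f$ becomes an isomorphism in $\Mod_\gs^\varphi\otimes_\Zp\Qp$ means that $f[1/p]$ is an isomorphism of free $\gs[1/p]$-modules; base-changing along $\gs[1/p]\to K_0$, $u\mapsto 0$, shows $r_1=r_2=:r$ and that $\bar f\otimes_\oko K_0$ is an isomorphism of $K_0$-vector spaces. Fixing $\oko$-bases, $\bar f$ is represented by a matrix $A\in \mathrm{M}_r(\oko)$ with $\det A\neq 0$ (it is invertible in $K_0$); as $\oko$ is a domain this already forces $\bar f$ to be injective, giving $\gm_1/u\gm_1\into\gm_2/u\gm_2$.

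Next I would push this through the Frobenius pullback, which is the one place where the imperfect residue field enters: $\varphi$ on $\oko$ is not surjective, so one cannot identify $\varphi^\ast(\gm_i/u\gm_i)$ with $\gm_i/u\gm_i$ (cf. Rem. \ref{remdimp}, Rem. \ref{r523}). Nevertheless, in the induced bases, $\varphi^\ast\bar f$ is represented by the matrix $\varphi(A)$ obtained by applying $\varphi$ entrywise, so $\det \varphi(A)=\varphi(\det A)$. Since the Frobenius lift $\varphi$ on the domain $\oko$ is injective (it fixes $p$ and lifts the $p$-power map on $k_K$), we get $\varphi(\det A)\neq 0$, whence $\varphi^\ast\bar f\colon \varphi^\ast(\gm_1/u\gm_1)\into\varphi^\ast(\gm_2/u\gm_2)$ is injective as well.

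Finally, for the connection: because $F$ is a morphism in $\mathrm{MF}^{\mathrm{BT}}_\gs(\varphi,\nabla)$, our convention that morphisms respect all structures means precisely that $\iota:=\varphi^\ast\bar f$ is compatible with the connections, i.e. $(\iota\otimes\mathrm{id}_{\wh\Omega_\oko})\circ\nabla_1=\nabla_2\circ\iota$. Since $\iota$, hence $\iota\otimes\mathrm{id}$, is injective by the previous step, this identity characterizes $\nabla_1$ uniquely as the restriction $\nabla_1=(\iota\otimes\mathrm{id})^{-1}\circ\nabla_2\circ\iota$; that is, $\nabla_2$ induces $\nabla_1$. I expect the only genuinely delicate point to be the preservation of injectivity under $\varphi^\ast$: it rests entirely on the injectivity of the non-surjective Frobenius on $\oko$, which is exactly the feature distinguishing the imperfect residue field case, while the remaining steps are formal.
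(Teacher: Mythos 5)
Your proof is correct, and while your connection step matches the paper's, your route to the key injectivity statement is genuinely different. The paper first observes that $\gm_2/\gm_1$ is killed by a power of $p$ and invokes \cite[Prop.~2.3.2]{Liu07} to conclude that this quotient is $u$-torsion free --- an input that uses the $\varphi$-structure and the finite $E(u)$-height condition, and which must be checked to carry over to the imperfect residue field setting --- whence $\gm_1/u\gm_1 \into \gm_2/u\gm_2$ follows from the vanishing of $\Tor_1^{\gs}(\gm_2/\gm_1, \gs/u)$. You instead reduce modulo $u$ immediately and argue by pure linear algebra over $\oko$: since $\bar f$ becomes an isomorphism after base change to $K_0$ and $\gm_1/u\gm_1$ is finite free (hence $p$-torsion free) over the domain $\oko$, injectivity is immediate, with no appeal to the $\varphi$-module structure or the height condition at this step. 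You also make explicit a point the paper's proof leaves implicit, namely why the injection survives the Frobenius twist: your computation $\det(\varphi^\ast \bar f)=\varphi(\det A)\neq 0$ rests on injectivity of the non-surjective Frobenius on $\oko$ (the kernel is an ideal of a DVR not containing any $p^n$ since $\varphi(p^n)=p^n$); equivalently one could note that $\varphi\colon \oko \to \oko$ is flat, since $\oko$ viewed as a module over itself via $\varphi$ is torsion free over a DVR. The final step coincides with the paper's: connection-compatibility is part of being a morphism in $\mathrm{MF}^{\mathrm{BT}}_\gs(\varphi,\nabla)$ by Convention \ref{subsubco}, and injectivity of $\iota\otimes \mathrm{id}_{\wh{\Omega}_\oko}$ --- which, as the paper notes, uses that $\wh{\Omega}_\oko$ is finite free over $\oko$ --- pins down $\nabla_1$ uniquely as the restriction of $\nabla_2$, which is exactly the uniqueness used later in the proof of Thm.~\ref{68} to conclude $\nabla'=\nabla$. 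What each approach buys: yours is more elementary and self-contained; the paper's yields the stronger fact that $\gm_2/\gm_1$ is $u$-torsion free (equivalently, the full short exact sequence modulo $u$), which is of independent use in the torsion theory but not needed for this lemma.
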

\begin{proof} 
The quotient $\gm_2/\gm_1$ is killed by some $p$-power and hence is $u$-torsion free by \cite[Prop. 2.3.2]{Liu07} (which is still applicable in our case). This implies that $\gm_1/u\gm_1  \into \gm_2/u\gm_2.$ Then it is clear that $ \nabla_2$ induces $ \nabla_1$ by looking at the commutative diagram (recall that $\wh{\Omega}_\oko  $ is finite free over $\oko$ )
\begin{equation*}
\begin{tikzcd}
\varphi^\ast(\gm_1/u\gm_1) \arrow[d, hook] \arrow[r, "\nabla_1"] & \varphi^\ast(\gm_1/u\gm_1) \otimes_\oko \wh{\Omega}_\oko \arrow[d, hook] \\
\varphi^\ast(\gm_2/u\gm_2) \arrow[r, "\nabla_2"]                 & \varphi^\ast(\gm_2/u\gm_2) \otimes_\oko \wh{\Omega}_\oko.                
\end{tikzcd}
\end{equation*}
\end{proof}

\begin{proof}[Proof of Thm. \ref{68}]
\textbf{Part 1}: full faithfulness.
$\underline{\gm}$ is faithful because the functor $T_\gs:  \Mod_{\gs }^\varphi  \to \Rep_{\Zp}(G_{\bbkinfty})$ is fully faithful.
 To show fullness, suppose given 
 $$T_1, T_2 \in \Rep_{\Zp}^{\cris,  1}(G_K), \text{ and the corresponding } (\gm_1, \nabla_1), (\gm_2, \nabla_2) \in  \mathrm{MF}^{\mathrm{BT}}_\gs(\varphi, \nabla).$$ 
 Let $\mathfrak f: (\gm_1, \nabla_1) \to (\gm_2, \nabla_2)$ be a morphism. 
 Since $\underline{\gm}$ induces an  equivalence  between the   isogeny categories, $\mathfrak f$ induces a unique $G_K$-equivariant morphism 
 $$f: T_1[1/p] \to T_2[1/p].$$ 
 It suffices to show that $f(T_1) \subset T_2$ (as subsets). One can simply consider the induced morphism $$\mathfrak{f}_\bbk: \gm_1 \otimes_\gs \gs_\bbk \to \gm_2  \otimes_\gs \gs_\bbk, $$
then the $G_\bbk$-equivariant $f$ must satisfies $f(T_1)\subset T_2$ by Thm. \ref{699}.

\textbf{Part 2}: essential surjectivity.  Let $(\gm, \nabla) \in \mathrm{MF}^{\mathrm{BT}}_\gs(\varphi, \nabla)$. Since $\underline{\gm}$ induces an  equivalence  between the   isogeny categories, we can choose some $L \in \Rep_{\Zp}^{\cris,  1}(G_K)$ such that  if we let 
$$\underline{\gm}(L)=(\gn, \nabla_\gn) \in \mathrm{MF}^{\mathrm{BT}}_\gs(\varphi, \nabla)$$ 
be the associated object, then we have a morphism 
$$ (\gm, \nabla) \to  (\gn, \nabla_\gn)$$
such that $\gm \to \gn$ is an injective morphism in $\Mod_\gs^\varphi$ which becomes isomorphism after inverting $p$. By Lem. \ref{lem6}, $\nabla_\gn$ induces $\nabla$.

Let $V=L[1/p]$. By Lem. \ref{4211} and Lem. \ref{thmwfr}, there exist   $\varphi$-equivariant isomorphisms
\begin{equation}\label{eq634}
\gm \otimes_\gs W(\mathbb C^\flat)[1/p] \simeq \gn \otimes_\gs W(\mathbb C^\flat)[1/p]  \simeq V \otimes_\Qp W(\mathbb C^\flat)[1/p].
\end{equation}
 Note that $(\gm\otimes_\gs \gs_\bbk, \varphi\otimes \varphi)$ corresponds to some $p$-divisible group over  $\O_{\bbk}$ and  hence to some integral crystalline representation of $G_\bbk$ via  Thm. \ref{699}; thus by Thm. \ref{thmgao},
\begin{equation}\label{eq635}
\gm \otimes_\gs W(\O_\mathbb{C}^\flat) =(\gm\otimes_\gs \gs_\bbk) \otimes_{  \gs_\bbk  } W(\O_\mathbb{C}^\flat) \text{ is } G_\bbk \text{-stable}.
\end{equation}
 Using the identification $C^\flat =\mathbb{C}^\flat$ via $i_\varphi$ in \S \ref{sec21}, we have a $\varphi$-equivariant isomorphism
 \begin{equation}\label{eq636}
 \gm \otimes_\gs W(  C^\flat)[1/p] \simeq V \otimes_\Qp W(  C^\flat)[1/p],
 \end{equation} 
and hence the  $G_K$-action on $V \otimes_\Qp W(C^\flat)[1/p]$    induces a  $G_K$-action on $\gm \otimes W(C^\flat)[1/p]$. 
Under this action, $\gm \otimes W(\O_C^\flat)$ is $G_\bbk$-stable by \eqref{eq635}.
Prop. \ref{p434} implies $G_{\Kinfty}$ acts trivially on $\gn[1/p]=\gm[1/p]$ as $\gn$ comes from a crystalline representation; hence $\gm \otimes W(\O_C^\flat)$ is also $G_{\Kinfty}$-stable.
Thus  Lem. \ref{lemsubgen} implies that  $\gm \otimes W(\O_C^\flat)$ is $G_K$-stable.
Thus, in particular, $\gm \otimes W(C^\flat)$  is $G_K$-stable, and hence gives rise to a $G_K$-stable $\Zp$-lattice $T \subset L \subset V$. 
Let $\underline{\gm}(T)=(\gm', \nabla')$. Obviously $\gm'=\gm$ (e.g., by Lem. \ref{45}).  
The injective morphism $T \into L$  induces a morphism 
$$ (\gm, \nabla') \to   (\gn, \nabla_\gn), $$
such that $\gm \to \gn$ is injective. Hence Lem. \ref{lem6} implies $\nabla'$ is also induced by $\nabla_\gn$, and hence $\nabla'=\nabla$. Namely, $\underline{\gm}(T)=(\gm,\nabla)$.
\end{proof}


\subsection{Finite flat group schemes}\label{sub64}

Let $\mathrm{FF}(\ok)$ be the category of finite flat group schemes over $\ok$. In this subsection, we classify $\mathrm{FF}(\ok)$ by torsion minuscule Breuil-Kisin modules with connections.

\begin{defn} \label{defff}
\begin{enumerate}
\item Let $\Mod_\gs^{\mathrm{tor}, \varphi, 1}$ be the category consisting of $(\gm, \varphi)$ where $\gm$ is a finite generated $\gs$-module killed by $p^n$ for some $n \geq 1$, and $\varphi: \gm \to \gm$ is a $\varphi_\gs$-semi-linear map such that the $\gs$-span of $\varphi(\gm)$ contains $E(u)\cdot \gm$.

\item Let $\mathrm{MF}^{\mathrm{tor}, \mathrm{BT}}_\gs(\varphi, \nabla)$ be the category consisting of the following data:
\begin{enumerate}
\item $(\gm, \varphi) \in \Mod_\gs^{\mathrm{tor}, \varphi, 1}.$
\item $\nabla: \varphi^\ast(\gm/u\gm) \to \varphi^\ast(\gm/u\gm) \otimes_{\O_{K_0}} \wh{\Omega}_\oko$ is a  quasi-nilpotent integrable connection which commutes with $\varphi$.
\end{enumerate}
\end{enumerate}
\end{defn}

\begin{defn}
For an exact category $\mathscr C$, let $D^b(\mathscr C)$ be the bounded derived category.
\begin{enumerate}
\item  Let $(\Mod_\gs^{\varphi, 1})^\bullet$ be the full subcategory of $D^b(\Mod_\gs^{\varphi, 1})$ consisting of two-term complexes $\gm^\bullet= \gm_1 \into \gm_2$ concentrated in degree 0 and -1, such that the morphism $\gm_1\to \gm_2$ is injective and  $H^0(\gm^\bullet)$ is killed by some $p$-power.

\item  Let $(\BT(\ok))^\bullet$ be the  full subcategory of $D^b(\BT(\ok))$ consisting of isogenies of $p$-divisible groups $G^\bullet= G_1 \to G_2$.
\end{enumerate}
\end{defn}

\begin{prop} \label{615}  
\begin{enumerate}
\item The functor $\gm^\bullet \mapsto H^0(\gm^\bullet)$ induces an equivalence between  $(\Mod_\gs^{\varphi, 1})^\bullet$ and $\Mod_\gs^{\mathrm{tor}, \varphi, 1}$.

\item The functor $G^\bullet \mapsto \Ker(G^\bullet)$ induces an equivalence between  $(\BT(\ok))^\bullet$ and $\mathrm{FF}(\ok)$.
\end{enumerate}
\end{prop}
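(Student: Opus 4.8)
The plan is to treat both parts as two instances of a single \emph{resolution} principle: a torsion object is recovered from a two-term complex of free (resp. $p$-divisible) objects, and the bookkeeping of the bounded derived category identifies the two sides. In part (1) the functor is $H^0$, the cokernel of $\gm_1\hookrightarrow\gm_2$; in part (2) it is $\Ker=H^{-1}$, the kernel of the isogeny $G_1\to G_2$. In both cases a two-term complex in degrees $-1,0$ has a single nonvanishing cohomology group, hence becomes isomorphic, in the derived category of the ambient abelian category (all $(\gs,\varphi)$-modules, resp. all fppf abelian sheaves over $\ok$), to that cohomology placed in one degree. Well-definedness is immediate: the cokernel of an injection of free $E(u)$-height $\le 1$ modules with $p$-power torsion cokernel inherits a $\varphi$ whose $\gs$-span contains $E(u)\cdot(\gm_2/\gm_1)$, so it lies in $\Mod_\gs^{\mathrm{tor},\varphi,1}$; and the kernel of an isogeny of $p$-divisible groups over $\ok$ is a finite flat group scheme.

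For part (1) the crux is essential surjectivity. Given $M\in\Mod_\gs^{\mathrm{tor},\varphi,1}$, I would first note that $M$ is $u$-torsion free by \cite[Prop. 2.3.2]{Liu07} (the input already used in Lemma \ref{lem6}). Since $\gs=\oko[[u]]$ is a two-dimensional regular local ring on which $u$ is a nonzerodivisor on $M$, we get $\operatorname{depth}_\gs M\ge 1$, whence by Auslander--Buchsbaum $\operatorname{pd}_\gs M=2-\operatorname{depth}_\gs M\le 1$, so $M$ admits a length-one free resolution $0\to\gm_1\to\gm_2\to M\to 0$. I then upgrade this to a resolution inside $\Mod_\gs^{\varphi,1}$: choose a free $\gm_2$ of height $\le 1$ surjecting onto $M$, lift $\varphi_M$ to a semilinear $\varphi_{\gm_2}$, and check that $\gm_1=\ker$ is $\varphi$-stable, free and again of height $\le 1$; this is the resolution step of \cite{Kis06} (cf. also \cite{Liu13}), and the argument is insensitive to $[k_K:k_K^p]$. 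Full faithfulness then follows formally: $\gm^\bullet$ is isomorphic to $M$ in the derived category, and since the $\gm_i$ are free and the complex is a projective resolution of its cokernel, $\Hom$ in $D^b$ between two such complexes is computed by $\Hom$ of the cohomology objects $M,M'$, so $H^0$ is fully faithful.

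Part (2) is formally parallel, with $p$-divisible groups in the role of the free modules and $\Ker$ in the role of $H^{-1}$, and its essential surjectivity is the substantive point and the expected main obstacle. Given a finite flat group scheme $H$ over $\ok$ one must produce an embedding $H\hookrightarrow G_1$ into a $p$-divisible group whose quotient $G_2:=G_1/H$ is again $p$-divisible, so that $H\simeq\Ker(G_1\to G_2)$. I would invoke the classical embedding theorem (Raynaud \cite{Ray74}; Berthelot--Breuil--Messing) that finite flat group schemes over the spectrum of a complete local Noetherian ring embed into $p$-divisible groups with $p$-divisible quotient, and verify that this input survives the passage to an imperfect residue field, since $\ok$ is still a complete discrete valuation ring. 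Full faithfulness is then handled exactly as in part (1): an isogeny complex $[G_1\to G_2]$ is quasi-isomorphic to a shift of $H$ in the derived category of fppf abelian sheaves, morphisms of such complexes reduce to morphisms of the kernels, and the $p$-divisible groups furnish the requisite resolutions.

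The hard part will be securing the embedding theorem over $\ok$ in the precise form needed (both $G_1$ and the quotient $G_2$ being $p$-divisible) in the imperfect residue field case, and setting up the exact-category structures on $\Mod_\gs^{\varphi,1}$ and $\BT(\ok)$ carefully enough that $\Hom$ in $D^b$ really does equal $\Hom$ of cohomology. Once these are in place the two equivalences drop out, and combining them with Thm. \ref{68} yields the classification of finite flat group schemes over $\ok$ by torsion minuscule Breuil--Kisin modules (Thm. \ref{fflat}).
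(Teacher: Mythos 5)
Your proposal is correct and takes essentially the same route as the paper, whose proof simply cites \cite[Lem. 2.3.4]{Kis06} for item (1) and the proof of \cite[Thm. 2.3.5]{Kis06} for item (2). Your resolution argument for (1) ($u$-torsion-freeness via \cite[Prop. 2.3.2]{Liu07}, a length-one free resolution, lifting $\varphi$) is exactly Kisin's, and your appeal to the Raynaud embedding theorem for (2) is precisely the use of \cite[Thm. 3.1.1]{BBM} that the paper notes remains applicable over $\ok$ despite the imperfect residue field.
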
 
\begin{proof}
Item (1) is \cite[Lem. 2.3.4]{Kis06}. Item (2) is proved in the proof of \cite[Thm. 2.3.5]{Kis06}; the argument uses \cite[Thm. 3.1.1]{BBM}, which is applicable for our  $\O_K$. 
\end{proof}

\begin{theorem}\label{fflat}
There is an equivalence of categories
$$ \mathrm{FF}(\O_K)\xrightarrow{\simeq}  \mathrm{MF}^{\mathrm{tor}, \mathrm{BT}}_\gs(\varphi, \nabla).$$
\end{theorem}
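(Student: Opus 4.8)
The plan is to reduce the statement to the equivalence of Theorem \ref{68} for \emph{free} objects by resolving all torsion data as cokernels of injections between free objects, following the strategy of \cite[Thm. 2.3.5]{Kis06} that already underlies Prop. \ref{615}. To this end I would introduce, in parallel with the categories of Prop. \ref{615}, the category $\left(\mathrm{MF}^{\mathrm{BT}}_\gs(\varphi, \nabla)\right)^\bullet$ of two-term complexes $\gm^\bullet = (\gm_1 \hookrightarrow \gm_2)$ placed in degrees $-1$ and $0$, where $\gm_1 \to \gm_2$ is a morphism in $\mathrm{MF}^{\mathrm{BT}}_\gs(\varphi, \nabla)$ whose underlying map of $\gs$-modules is injective with $p$-power torsion cokernel (equivalently, an isomorphism after $\otimes_\Zp \Qp$). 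The desired equivalence is then assembled from the chain
\[
\mathrm{FF}(\ok) \xleftarrow{\ \Ker\ } (\BT(\ok))^\bullet \xrightarrow{\ \text{Thm.~\ref{68}}\ } \left(\mathrm{MF}^{\mathrm{BT}}_\gs(\varphi, \nabla)\right)^\bullet \xrightarrow{\ H^0\ } \mathrm{MF}^{\mathrm{tor}, \mathrm{BT}}_\gs(\varphi, \nabla),
\]
in which the leftmost arrow is Prop. \ref{615}(2) and the rightmost is the connected upgrade of Prop. \ref{615}(1).

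First I would verify the middle arrow. Theorem \ref{68} is an equivalence of categories, so I only need to match the two notions of distinguished object. An isogeny $G_1 \to G_2$ induces, via the covariant $T_p$, an inclusion $T_p(G_1) \hookrightarrow T_p(G_2)$ with finite cokernel; under the dictionary between Galois-stable lattices and $\varphi$-stable $\gs$-lattices of Lem. \ref{45} this becomes an injection $\gm_1 \hookrightarrow \gm_2$ of Breuil-Kisin modules that is an isomorphism after inverting $p$ and has $p$-power torsion cokernel, and conversely. Hence Theorem \ref{68} restricts to an equivalence between $(\BT(\ok))^\bullet$ and the two-term complexes defined above.

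Next I would promote the $H^0$-equivalence of Prop. \ref{615}(1) so that it carries the connection. Given $\gm^\bullet = (\gm_1 \hookrightarrow \gm_2)$, put $\gm = \coker(\gm_1 \to \gm_2) \in \Mod_\gs^{\mathrm{tor}, \varphi, 1}$; the content is to equip $\varphi^\ast(\gm/u\gm)$ with a connection. This is precisely where Lem. \ref{lem6} applies: because $\gm_2/\gm_1$ is $u$-torsion free by \cite[Prop. 2.3.2]{Liu07}, reduction modulo $u$ stays exact, so $\gm/u\gm = \coker(\gm_1/u\gm_1 \to \gm_2/u\gm_2)$ and, applying the right-exact $\varphi^\ast$, $\varphi^\ast(\gm/u\gm) = \coker\!\big(\varphi^\ast(\gm_1/u\gm_1) \to \varphi^\ast(\gm_2/u\gm_2)\big)$. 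The compatible connections $\nabla_1, \nabla_2$ on the two free reductions then descend to a quasi-nilpotent integrable $\nabla$ on this cokernel commuting with $\varphi$, producing an object of $\mathrm{MF}^{\mathrm{tor}, \mathrm{BT}}_\gs(\varphi, \nabla)$. Composing the three arrows, and checking that the assignment is functorial and admits the evident quasi-inverse (lift a torsion object to a resolution, apply the free equivalence termwise, take kernels of the corresponding isogeny), gives the theorem.

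The step I expect to be the main obstacle is the careful handling of the connection in the torsion setting. Since $\nabla$ is defined on the Frobenius twist $\varphi^\ast(\gm/u\gm)$ rather than on $\gm$ itself (cf. Rem. \ref{r523}), I must confirm not only that the descent above is well posed, quasi-nilpotent and integrable, but also that the resulting $(\gm, \nabla)$ is independent, up to canonical isomorphism, of the chosen resolving isogeny; two resolutions of a fixed finite flat group scheme differ by a comparison that I would control using the full faithfulness from Theorem \ref{68} together with Lem. \ref{lem6}. Once the connection is shown to transport functorially, the remaining ingredients — exactness of $\Ker$ and $H^0$ and the termwise application of Theorem \ref{68} — are formal and already packaged in Prop. \ref{615}.
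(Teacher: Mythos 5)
Your construction of the functor from complexes to torsion objects, and the full-faithfulness half of the theorem, are fine and agree with the paper (which cites the argument of \cite[Prop. 9.5]{Kim15}). But there is a genuine gap at the step you dismiss as ``formal and already packaged in Prop. \ref{615}'': the essential surjectivity of your $H^0$ functor on the \emph{connection-decorated} categories. Prop. \ref{615}(1) only lifts the pair $(\gm, \varphi)$ to a two-term complex of finite free objects; it says nothing about connections. Given $(\gm, \varphi, \nabla) \in \mathrm{MF}^{\mathrm{tor}, \mathrm{BT}}_\gs(\varphi, \nabla)$ and a $\varphi$-equivariant surjection $\gn \onto \gm$ with $\gn$ free, you must equip the finite free $\oko$-module $\varphi^\ast(\gn/u\gn)$ with a quasi-nilpotent integrable connection, commuting with $\varphi$, that induces the given $\nabla$ on the torsion quotient $\varphi^\ast(\gm/u\gm)$. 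Such a lift is not automatic: Lem. \ref{lem6} only transports connections \emph{down} (from a free object to a subobject, or as in your argument to a quotient), never \emph{up} from a torsion quotient to a free cover, and there is no formal reason a compatible lift over $\oko$ exists at all. The paper states this explicitly: ``Unlike Prop. \ref{615}(1), it seems very difficult to directly lift $(\gm, \varphi, \nabla)$ to a finite free object \emph{with a connection}.'' Your worry about independence of the chosen resolution is the wrong obstacle; the existence of even one connection-compatible resolution is the crux, and your ``evident quasi-inverse'' presupposes it.

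This is precisely why the paper's proof (following \cite[\S 10.4]{Kim15}) cannot stay over $\oko$. It invokes Vasiu's theory of moduli of connections \cite{Vas13} to produce a faithfully flat ind-\'etale map $\oko \to A_0$, with $A_0$ a complete discrete valuation ring whose residue field admits a finite $p$-basis, over which a connection on $\varphi^\ast(\gn/u\gn) \otimes_\oko A_0$ compatible with the surjection onto $\varphi^\ast(\gm/u\gm)\otimes_\oko A_0$ does exist. One then applies Thm. \ref{68} over the larger base $A = A_0 \otimes_\oko \ok$ (legitimately, since $A[1/p]$ is again a CDVF of the type covered by the theorem) to convert $\gn_A' \into \gn_A$ into an isogeny of $p$-divisible groups, takes its kernel $H_A$, and finally descends $H_A$ to $\ok$ by an fpqc descent argument. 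None of these ingredients --- the ind-\'etale cover, Vasiu's moduli of connections, or the descent --- appears in your outline, and without them the essential surjectivity step fails. If you want to keep your complex-category framing, the honest statement is that the middle and left arrows of your chain work as you describe, while the right arrow is an equivalence only after one proves the lifting statement above, which is the actual content of the theorem.
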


\begin{rem}\label{remkim}  
Let $R$ be as in Assumption \ref{assr} and suppose it satisfies Assumption  (ii) there, then \cite[Thm. 9.8]{Kim15} proves the relative version of our Thm. \ref{fflat} when $p>2$. 
The argument there also proves our Thm. \ref{fflat} even when $p=2$, as our Thm. \ref{68} is available when $p=2$. 
Note that the proof in \cite{Kim15} makes crucial use of Vasiu's theory of ``moduli of connections" \cite{Vas13}, which is applicable when $p=2$.
\end{rem}

\begin{proof}[Proof of Thm. \ref{fflat}]
For the reader's convenience, we give a sketch of Kim's argument mentioned in Rem. \ref{remkim}, specialized in the case $R=\ok$. This should only serve as a very brief road-map to Kim's argument; in particular, we do not review Vasiu's theory.

Our Thm. \ref{68} is stronger than the assumption in  \cite[Prop. 9.5]{Kim15}, and our  Prop. \ref{615}(2) is stronger than   \cite[Lem. 9.7]{Kim15}; hence the   argument in \cite[Prop. 9.5]{Kim15} easily shows that we can construct a \emph{fully faithful} functor
$$ {\gm}^\ast :  \mathrm{FF}(\O_K)\to  \mathrm{MF}^{\mathrm{tor}, \mathrm{BT}}_\gs(\varphi, \nabla).$$

The difficulty is to show the above functor is essentially surjective; the proof here follows similar strategy as in \cite[\S 10.4]{Kim15}.
Let $(\gm, \varphi, \nabla) \in \mathrm{MF}^{\mathrm{tor}, \mathrm{BT}}_\gs(\varphi, \nabla)$.
Unlike Prop. \ref{615}(1), it seems very difficult to directly lift $(\gm, \varphi, \nabla)$ to a finite free object \emph{with a connection}.
Nonetheless, Prop. \ref{615}(1) (playing the role of \cite[Lem. 10.10]{Kim15} here) implies that there exists a \emph{finite free} $\gn \in \Mod_\gs^\varphi$ of height $1$ and a $\varphi$-equivariant surjective  morphism $\gn \onto \gm$. 
Let 
$$\cn_0:=\gn \otimes_{\varphi, \gs} \O_{K_0} =\varphi^\ast(\gn/u\gn).$$
One can use the same argument as in \cite[Prop. 10.11]{Kim15} -- which critically uses Vasiu's theory of moduli of connections -- to show that there exists a  faithfully flat  ind-\'etale  map 
$$\oko \to A_0$$ 
such that the following are satisfied:
\begin{enumerate}
\item $A_0$ is a CDVR with $p$ as a uniformizer and with residue field admitting a finite $p$-basis; here is the argument:
\begin{itemize}
\item  in our context, the ``$R_{0, k}$" in \cite[p. 8219]{Kim15} is simply $k_K$ (which is a field admitting a finite $p$-basis).

\item  Hence the \'etale algebra ``$\mathcal{Q}_{1, k}$" over ``$R_{0, k}$" in \cite[p. 8219]{Kim15} is nothing but a finite product of finite separable extensions of $k_K$ (hence all admitting   finite $p$-bases);  hence \emph{so are} all the inductively defined  \'etale algebras ``$\mathcal{Q}_{n+1, k}$" over ``$\mathcal{Q}_{n, k}$" for all $n \geq 1$, as well as all the ``$\mathcal{Q}_{n, k}'$" in \cite[p. 8227]{Kim15} which are quotients of ``$\mathcal{Q}_{n, k}$". 
\item  The ``$A_0$" in \cite[p. 8228]{Kim15} chosen  {in our context} is nothing but a finite product of CDVRs each with $p$ as a uniformizer and with residue field admitting a finite $p$-basis; we could simply take one factor as that already satisfies the requirement $\Spec A_0/(p)$ surjects to $\Spec k_K$ as asked in \cite[p. 8228]{Kim15}.
\end{itemize}

\item And  there is a connection operator 
\begin{equation}\label{eqc1}
\nabla:  \cn_0 \otimes_\oko A_0  \to (\cn_0 \otimes_\oko A_0) \otimes_{A_0}\wh{\Omega}_{A_0}, 
\end{equation}
  such that the induced map
\begin{equation}\label{eqc2}
  \cn_0 \otimes_\oko A_0  \onto \varphi^\ast(\gm/u\gm) \otimes_\oko A_0
\end{equation}
  is compatible with connections on both sides.  
\end{enumerate}

Let $A=A_0\otimes_{\oko}\O_K$.
Then $A[1/p]$ is a CDVF whose residue field  admits a  finite $p$-basis. Then one can define $\gs_A$ and define the relevant Breuil-Kisin modules and related categories. 
Let 
$$\gn_A:=\gs_A\otimes_\gs \gn, \quad \gm_A:=\gs_A\otimes_\gs \gm $$ 
and denote 
$$\gn_A'=\Ker(\gn_A \onto \gm_A).$$
By \cite[Prop. 10.3]{Kim15}, $\gn_A$ together with the connection \eqref{eqc1} is an object in $ \mathrm{MF}^{\mathrm{BT}}_{\gs_A}(\varphi, \nabla) $.
Because of \eqref{eqc2},  $\gn_A'$ can be equipped with an induced connection, and hence becomes an object in $ \mathrm{MF}^{\mathrm{BT}}_{\gs_A}(\varphi, \nabla) $ as well.
Since $A[1/p]$ is  a CDVF whose residue field  admits a  finite $p$-basis,   Thm. \ref{68} implies that $\gn_A' \into \gn_A$ corresponds to an isogeny $G_A' \to G_A$ of $p$-divisible groups over $A$. 
Let 
$$H_A:=\Ker(G_A' \to G_A).$$ 
Clearly, $ {\gm}^\ast(H_A)=\gm\otimes_\gs \gs_A$. 
Finally, a fpqc descent argument as in \cite[p. 8228]{Kim15} shows that $H_A$ descends to some finite flat group scheme over $\O_K$, giving the desired object mapping to $\gm$.
\end{proof}


 \bibliographystyle{alpha}

\end{document}